\numberwithin{equation}{section}
\newtheorem{theorem}{Theorem}[section]
\newtheorem{lemma}[theorem]{Lemma}
\newtheorem{remark}[theorem]{Remark}
\newtheorem{proposition}[theorem]{Proposition}
\newtheorem{definition}[theorem]{Definition}
\newtheorem{proof}{Proof}
\DeclareMathOperator{\sign}{sign}
\DeclareMathOperator{\spn}{span}
\DeclareMathOperator{\dimm}{dim}
\DeclareMathOperator{\codim}{codim}
\DeclareMathOperator*{\esssup}{ess\,sup}
\DeclareMathOperator\erf{erf}
\newcommand{\as}[1]{\left\vert#1\right\vert}
\newcommand{\magg}[1]{\left\vert#1\right\vert ^{2}}
\newcommand{\norm}[1]{\left\Vert#1\right\Vert}
\newcommand{\ds}{\displaystyle}
\title{Biological aggregations from spatial memory and nonlocal advection}
\author{{Di Liu}\thanks{Co-first authors.} \\
	School of Mathematics\\
	Harbin Institute of Technology\\
	Harbin, China \\
	\And
        {Yurij Salmaniw$^{*,}$}\thanks{Corresponding author.} \\
	Department of Mathematical and Statistical Sciences\\
	University of Alberta\\
	Edmonton, Canada \\
	\texttt{salmaniw@ualberta.ca} \\
	\AND
	Jonathan R. Potts \\
        School of Mathematics and Statistics \\
	University of Sheffield \\
	Sheffield, United Kingdom \\
	\And
	Junping Shi \\
	Department of Mathematics \\
	College of William $\&$ Mary \\
        Williamsburg, VA. \\
	\And
	Hao Wang \\
        Department of Mathematical and Statistical Sciences \\
	University of Alberta \\
	Edmonton, Alberta \\
}
\begin{document}
\maketitle

\begin{abstract}
        We investigate a nonlocal single-species reaction-diffusion-advection model that integrates the spatial memory of previously visited locations and nonlocal detection in space, resulting in a coupled PDE-ODE system reflective of several existing models found in spatial ecology. We prove the existence and uniqueness of a H{\"o}lder continuous weak solution in one spatial dimension under some general conditions, allowing for discontinuous kernels such as the top-hat detection kernel. A robust spectral and bifurcation analysis is also performed, providing the rigorous analytical study not yet found in the existing literature. In particular, the essential spectrum is shown to be entirely negative, and we classify the nature of the bifurcation near the critical values obtained via a linear stability analysis. A pseudo-spectral method is used to solve and plot the steady states near and far away from these critical values, complementing the analytical insights. 
\end{abstract}

\keywords{nonlocal advection \and bifurcation analysis \and spatial memory \and pattern formation}


\section{Introduction}

\subsection{Background and model formulation}

Spatial memory is a key feature driving the movement of mobile organisms \cite{faganetal2013}.  As organisms move, they gather information about where they have been, building a map that informs future movement decisions.  This process generates a feedback mechanism, whereby previous visitations of favourable locations can cause repeated visits, resulting in the organism confining itself to certain specific areas of the landscape.  In animal ecology, such memory processes have been hypothesised to be foundational in the construction of home ranges \citep{briscoeetal2002, borgeretal2008, vanmoorteretal2009}, small areas where an animal decides to perform its daily activities instead of roaming more widely.  Conversely, memory of unfavourable locations can cause animals to relocate.  For example, memory has been shown to play a key role in migratory movements \citep{bracismuller2017, abrahmsetal2019} and avoiding conspecifics to form territories or home ranges \citep{pottslewis2016, ellisonetal2020}.  

Understanding how memory processes help to shape the space use of animals is thus becoming a question of increasing interest in both empirical ecology \citep{faganetal2013, merkleetal2014, merkleetal2019, lewisetal2021} and mathematical modelling \citep{pottslewis2016, ShiShiWang2021JMB, wangsalmaniw2022}.  From a modelling perspective, a key tool for modelling movement in response to remembered space use is via an advection term in a partial differential equation (PDE).  This advection term is typically nonlocal in space, for both biological and mathematical reasons.  From a biological perspective, nonlocality is important because organisms will generally sense their surrounding environment -- for example through sight, smell, or touch -- and make movement decisions accordingly \citep{benhamou2014, martinezgarciaetal2020}. Moreover, this nonlocality occurs not only in animals but also in cells \citep{armstrongetal2009, painter2023biological}.  Mathematically, nonlocal advection is often crucial for well-posedness and avoiding blow-up of PDEs \citep{bertozzilaurent2007, giunta2022detecting}.

Alongside advection, mathematical models of organism movement typically have a diffusive term, accounting for the aspects of movement that we are not explicitly modelling (such as foraging), and may also have a reaction term representing the births and deaths of organisms.  This leads to the formalism of reaction-diffusion-advection equations (RDAs).  In a one dimensional spatial domain $\Omega$, such an RDA might have the following general form
\begin{align}\label{prototype}
u_t = d u_{xx} + \alpha (u a_x)_x + f(u), \quad x \in \Omega, \ t> 0.
\end{align}
Here, $d>0$ denotes the rate of diffusion (exploratory movement), $\alpha \in \mathbb{R}$ denotes the rate of advection towards ($\alpha <0$) or away from ($\alpha>0$) the environmental covariates described by the function $a(x,t)$, and $f(u)$ describes population changes through birth/death processes. When paired with an appropriate initial/boundary condition, we seek to analyze dynamical behaviours of the solution $u(x,t)$ as it depends on parameters appearing in the equation. 

The aspect of memory then appears in the advection term $a(x,t)$. A recent review paper by \citep{wangsalmaniw2022-2} covers in detail the development of equations to model memory, as well as the related concept of learning, along with a large collection of open problems and directions in this area.  The central idea is to model spatial memory as a map, $k(x,t)$, which evolves over time as the organism learns about its environment \citep{faganetal2013, pottslewis2016, pottslewis2019}.  This map may represent something in the mind of a specific animal, sometimes called a {`cognitive map'} \citep{Harten2020, Peer2021}, or it could represent memory of past animal locations embedded in the environment, e.g. due to animals depositing scent marks or forging trails.

Here, we seek to explore the influence of such a map on the space-use patterns of a single population, $u$. To this end, we describe the evolution of $k(x,t)$ through the ordinary differential equation for each $x \in \Omega$, following \cite{pottslewis2016}
\begin{equation}\label{2a}
k_t = g(u) - ( \mu + \beta u) k, \quad t>0.
\end{equation}
Here, the function $g(\cdot)$ describes the uptake rate of the map $k$ as it depends on the population $u(x,t)$; $\mu\geq0$ describes the rate at which memories fade over time; and $\beta\geq0$ describes a rate at which organisms remove a location from their memory map on revisitation (e.g. if animals want to avoid overuse of a location \citep{pottsetal2022}). 

Note that, for simplicity, we have assumed that all organisms in a population share a common memory map.  This makes it perhaps more amenable to modelling the distribution of cues left on the environment, e.g. scent marks or visual cues \citep{lewismurray1993, moorcroftlewis2006}, rather than memory contained in the minds of animals.  Alternatively, if the population modelled by $u(x,t)$ has some process of relatively-rapid information sharing, then we can view $k(x,t)$ as a shared memory amongst the population (e.g. for social insects this may be valid).   As another example, if $u(x,t)$ is the probability distribution of a single animal (in which case $f(u)=0$ per force) then $k(x,t)$ can be used to model a map in the mind of an individual \citep{pottslewis2016}.

Prototypical examples of the function $g(u)$ might be $g(u)=\rho u$, denoting memory accruing in proportion to animal visitations, or $g(u)=\rho u^2$, denoting uptake of memory when members of $u$ encounter one another (here, $\rho$ is a constant).   However, these can, in principle, both lead to unbounded memory.  Therefore we can either take another functional form, such as $g(u)=\rho u^2/(1+cu)$ (cf. the Holling type II functional response \citep{holling1965}), or modify Equation (\ref{2a}) to the following as in \cite{pottslewis2016}:
\begin{equation}\label{2a-2}
k_t = g(u) (\kappa-k) - ( \mu + \beta u) k,\quad t>0,
\end{equation}
where $\kappa>0$ denotes a theoretical maximal memory capacity.

To combine this mechanism of spatial memory with nonlocal perception, we model nonlocal effects through a spatial convolution:
\begin{equation}\label{1.3}
\overline{k} (x,t) =(G\ast k)(x,t)=\frac{1}{\as{\Omega}} \int_{\Omega} G(x-y) k(y,t) dy.
\end{equation}
Here, the function $G(\cdot)$ is referred to as a \textit{perceptual kernel} or \textit{detection function}, which describes how an animals' ability to perceive landscape information varies with distance \cite{Fagan2017perceptual, wangsalmaniw2022}. Common forms of the detection function $G(\cdot)$ include the \textit{Gaussian} detection function, the \textit{exponential} detection function, or the \textit{top-hat} detection function, each taking the respective forms in $\Omega = \mathbb{R}$:
\begin{align}
G(x) &:= \frac{1}{\sqrt{2 \pi} R} e^{- x^2 / 2 R^2} , \label{detectionkernelG} \\
G(x) &:= \frac{1}{2R} e^{- \as{x}/ R} ,\label{detectionkernelE} \\
G(x) &: =
\begin{cases} 
\frac{1}{2R} , \hspace{0.5cm}- R \leq x \leq R, \cr
0, \hspace{1.0cm} \text{otherwise}. \label{detectionkernelT}
\end{cases} 
\end{align}
Here, $R\geq 0$ is the \textit{perceptual radius}, describing the maximum distance at which landscape features can be distinguished \cite{Fagan2017perceptual}. Roughly, the Gaussian detection function provides the most information far away from the location of observation, whereas the top-hat detection function gives a strict limit on how far the organism can detect information. In general, it is reasonable to assume the detection function satisfies
\begin{enumerate}[label={\roman*}.)]
\item $G (x)$ is symmetric about the origin;
\item $\int_{\mathbb{R}} G (x) dx = 1$;
\item $\lim_{R \to 0^+} G (x) = \delta (x)$;
\item $G(x)$ is non-increasing from the origin.
\end{enumerate}\label{generaldetectionkernel}
Here, $\delta (x)$ denotes the Dirac-delta distribution. Each of the Gaussian, exponential, and top-hat kernels satisfy these properties over $\mathbb{R}$; appropriate modification is sometimes required in a bounded domain. Readers are encouraged to review \cite{Fagan2017perceptual, wangsalmaniw2022} for further discussion on detection kernels and some of the challenges in defining nonlocal kernels near a boundary region.

Taking the advective potential $a(x,t) = \overline{k}(x,t)$, where $k$ solves either \eqref{2a} or \eqref{2a-2}, we combine the equation describing movement \eqref{prototype} with a dynamic spatial map in $\Omega = (-L,L)$, $L>0$, to arrive at the following two systems of equations subject to periodic boundary conditions:
\begin{equation}\label{1a}
\begin{cases}
u_t = d u_{xx} + \alpha ( u \overline{k}_{x} )_{x} + f(u), & x\in (-L,L), \ t>0, \\
k_t = g(u) - ( \mu + \beta u) k, & x \in (-L,L), \ t> 0,
\end{cases}\tag{1.8.a}
\end{equation}
and
\begin{equation}\label{1b}
\begin{cases}
u_t = d u_{xx} + \alpha ( u \overline{k}_{x} )_{x} + f(u), & x\in (-L,L), \ t>0, \\
k_t = g(u)(\kappa - k) - ( \mu + \beta u) k, & x \in (-L,L), \ t> 0.
\end{cases}\tag{1.8.b}
\end{equation}
In either case, we denote by $u(x,0) = u_0 (x)$, $k(x,0) = k_0 (x)$ the initial data, chosen to be $2L$-periodic in $\Omega$. As discussed in \cite{wangsalmaniw2022}, boundary conditions in a nonlocal setting in a bounded domain are highly non-trivial in general. More precisely, it is not clear how to appropriately define the spatial convolution \eqref{1.3} near the boundary of the domain while remaining analytically tractable. For this reason, we appeal to a periodic boundary condition, which requires no further modification of \eqref{1.3} near the boundary points $\{-L,L\}$. While problems \eqref{1a} and \eqref{1b} appear similar in form, it is of interest to understand exactly when and how these two formulations differ in their solution behaviours: should they be identical, it seems reasonable to choose the more tractable model depending on the goals; should they differ significantly, it is reasonable to determine \textit{when} and \textit{how} they differ, which gives insights into the validity of either case in a given context.

There exist a number of works that consider a multi-species model of the form taken in either \eqref{1a} or \eqref{1b}, see, e.g., \cite{pottslewis2016, pottslewis2019}. In these works, a linear stability analysis is performed to determine conditions sufficient for pattern formation to occur. These models are comparable in that they include a cognitive map through an additional, dynamic equation, and they also incorporate nonlocal perception. Other models with nonlocal advective operators have been studied by \cite{DucrotFuMagal2018,Giunta2021local,jungel2022nonlocal}, where some global existence results are obtained. In \cite{DucrotFuMagal2018}, fractional Sobolev spaces are utilized in a one-dimensional torus to establish a global existence result which includes the possibility of a top-hat kernel; however, the model does not incorporate a dynamic cognitive map. In \cite{Giunta2021local}, a global existence result is establish using a contraction mapping argument, but the regularity requirements of the nonlocal kernel do not include the top-hat detection function. In \cite{jungel2022nonlocal}, a global existence result is obtained for a special case of the $n$-species cross-diffusion system considered in \cite{Giunta2021local} which includes the top-hat detection function, but the kernels are assumed to be in \textit{detailed balance}, which greatly reduces the applicability for biological application. Other memory-based movement models  has been investigated in \cite{ShiShiWangYan2020,Song2022spatiotemporal}, where the cognitive map is now given by a nonlocal integral operator in \textit{time}. In such cases, the problem is a delay partial differential equation. The stability of coupled PDE-ODE models have also been studied in works such as \cite{Marciniak2017,LiMarciniakTakagiWu2017}. 

With our models at hand, the major goals of this paper are as follows. First, we seek to prove the well-posedness of models \eqref{1a} and \eqref{1b}. In particular, in Section \ref{sec:wellposedness} we prove the existence of a unique, global weak solution when the detection function $G(\cdot)$ satisfies an $L^p$-embedding type condition (see Hypotheses (H3)), which includes the discontinuous top-hat detection function. This provides an answer to Open Problems 10 and 12 found in \cite{wangsalmaniw2022}, at least for the single species case. We then shift our attention to the solution behaviour and the potential for pattern formation at a steady state. In Sections \ref{sec:stabilityanalysis}-\ref{sec:bifurcationanalysis}, we perform a robust stability and bifurcation analysis to understand the long term behaviour of the solution as it depends on parameters $d$, $\alpha$, the uptake rate $g(\cdot)$ and the kernel $G(\cdot)$. While an intuitive understanding of the relevant factors influencing pattern formation can be gleaned from a less formal linear stability analysis (see Section \ref{sec:lsa}), further care is needed for nonlocal advective operators as the essential spectrum may be non-empty. This is different from standard reaction-diffusion systems where the essential spectrum is empty. In our case, we find that the essential spectrum is entirely negative, and so has no impact on changes of stability. Section \ref{sec:top-hatanalysis} is then dedicated to a detailed case study with the top-hat detection function. To explore (perhaps subtle) differences in our formulations, we focus on three particular forms of uptake $g(\cdot)$ to better understand differences in fundamental assumptions for the function $g$. Numerical simulations using a pseudo-spectral method \cite{Giunta2021local, wangsalmaniw2022} are presented to highlight these differences.

\subsection{Preliminaries \& Hypotheses}

Denote $\mathbb{N}_0=\mathbb{N}\cup\{0\}$. Recall that the following eigenvalue problem
\begin{equation}\label{2}
\begin{cases}
-\phi''(x)=l\phi(x),\quad\quad\quad\quad\quad x\in(-L,L),\ \\
\phi(-L)=\phi(L),\ \phi'(-L)=\phi'(L), &
\end{cases}
\end{equation}
with eigenvalues and eigenfunctions
\begin{equation}\label{6}
\begin{aligned}
&l_{\pm n}=\frac{n^2\pi^2}{L^2},\ \ \phi_{\pm n}(x)=e^{\frac{\pm in\pi}{L}x}=\cos\left(\frac{n\pi}{L}x\right)\pm i\sin\left(\frac{n\pi}{L}x\right),\ n\in \mathbb{N}_0.
\end{aligned}
\end{equation}
We define the linear spaces
\begin{align*}
L^2_{per}(-L,L)=\left\{h\in L^2(-L,L): h=\sum_{n=-\infty}^{\infty}c_n\phi_n \text{ with }  \sum_{n=-\infty}^{\infty}\lvert c_n\rvert^2<\infty\right\},
\end{align*}
and 
\begin{equation*}
H^2_{per}(-L,L)=\{h\in L^2_{per}(-L,L): h''\in L^2_{per}(-L,L)\},
\end{equation*}
where 
\begin{align*}
c_n = \langle h,\phi_n \rangle = \frac{1}{2L}\int_{-L}^{L}h(x)\phi_{-n}(x) dx.
\end{align*}
We then denote by $X$ and $Y$ the spaces $H^2_{per}(-L,L) \times L^2_{per}(-L,L)$ and $L^2_{per}(-L,L)\times L^2_{per}(-L,L)$, respectively.
We always assume the following for the spatial kernel $G$:
\begin{enumerate}
\item[(H0)]\label{H0} 
$\begin{cases} 
0 \leq G(x) \in L^1_{per} (-L,L),\ G(-x) = G(x)  \text{ for all } x \in (-L,L),\cr
(2L)^{-1} \int_{-L}^L G(y)dy = 1.
\end{cases}$
\end{enumerate}

The Gaussian, exponential and top-hat detection functions all satisfy (H0) in $(-L,L)$ with the following respective scaling prefactors: $\erf(L / \sqrt{2}R)$, $1-e^{-L/R}$, and $2L$. For the stability and bifurcation analysis performed in Sections \ref{sec:stabilityanalysis}-\ref{sec:bifurcationanalysis}, we assume that the growth rates $f$ and $g$ satisfy
\begin{enumerate}
	\item[(H1)] \label{H1}
	$f(u)\in C^3([0,\infty))$, $f(0)=f(1)=0,$ $f'(0)>0$, $f'(1)<0$, $f(u)>0$ for $u\in (0,1)$ and $f(u)<0$ for $u>1.$
	\item[(H2)] \label{H2}
	$g(u)\in C^3([0,\infty))$, $g(u)>0$ on $(0,\infty)$, $g(0)=0,$ and  $g(1)=\rho >0.$
\end{enumerate}

To establish the well-posedness of the problem, we also assume in addition to (H0) that the kernel $G(\cdot)$ satisfies the following $L^p$-type estimate for any $R>0$ fixed:
\begin{enumerate}
\item[(H3)] $\norm{\overline{z}_x}_{L^p (\Omega)} \leq C \norm{z}_{L^p (\Omega)} \quad \text{ for all } z \in L^p (\Omega),\ 1 \leq p \leq \infty $.
\end{enumerate}

A prototypical example of $f(u)$ is the logistic function $f(u)=u(1-u)$, foundational in models of population growth. Biologically-motivated examples of $g(u)$ include $g(u)=\rho u$, $g(u)=\rho u^2$, and $g(u)=\rho u^2/(1+cu)$, which were discussed in the paragraph prior to Equation (\ref{2a-2}). Note that the hypotheses required in the bifurcation analysis are generally stronger than those required for well-posedness; for this reason, we state the sufficient hypotheses for the existence of a solution directly in the statement of Theorem \ref{thm:exist1a}.

Throughout this paper, we denote the null space of a linear operator $ L $ by $ \mathcal{N}(L) $, the domain of $ L $ by $ \mathcal{D}(L) $, the range of $ L $ by $ \mathcal{R}(L) $, the resolvent set of $ L $ by $ \mathcal{\rho}(L) $, and the spectrum of $ L $ by $ \mathcal{\sigma}(L) $. We always denote by $Q_T := \Omega \times (0,T) = (-L,L) \times (0,T)$.

\subsection{Statement of Main Results}

Our first result establishes the existence of a unique, nontrivial solution $(u,k)$. Due to the weak regularity assumption (H0) on the kernel $G(\cdot)$, we do not expect solutions to be classical necessarily. Denote by $h(u,k)$ the right hand side of the equation for the map $k$ in either \eqref{1a} or \eqref{1b}. We call $(u,k)$ a \textit{weak solution} to either \eqref{1a} or \eqref{1b} if, given any test function $\phi_i \in L^2 (0,T; H^1 (\Omega))$, $i=1,2$, there holds
\begin{align}
\iint_{Q_T} u_t \phi_1 {\rm d}x {\rm d}t + \iint_{Q_T}  ( d u_x + \alpha u \overline{k}_x ) (\phi_1)_x {\rm d}x {\rm d}t = \iint_{Q_T}  f(u) \phi_1 {\rm d}x {\rm d}t, \label{weaksolnu} \\
\iint_{Q_T}  k_t \phi_2 {\rm d}x {\rm d}t = \iint_{Q_T}  h(u,k) \phi_2 {\rm d}x {\rm d}t,\label{weaksolnk}
\end{align}
and the initial data is satisfied in the sense of $H^1 (\Omega)$ (in fact, the initial data will be satisfied in the sense of $C(\overline{\Omega})$ by the Sobolev embedding). We call a weak solution a \textit{global weak solution} if \eqref{weaksolnu}-\eqref{weaksolnk} holds for any $T>0$.

We have the following well-posedness result for problems \eqref{1a} and \eqref{1b}. 
\begin{theorem}\label{thm:exist1a}
Fix $T>0$, $\alpha \in \mathbb{R} \setminus \{ 0 \}$, $d,R>0$, $\mu, \beta, \kappa \geq0$, and assume that the kernel $G(\cdot)$ satisfies (H0) and (H3). Suppose that for some $\sigma \in (0,1)$, $f,g \in C^{2+\sigma} (\mathbb{R}^+)$ with $f(0) = g(0) = 0$. Assume that $f$ satisfies the bound
$$
f(z) \leq f^\prime (0) z \quad \text{ for all }\quad z \geq 0,
$$
while $g$ satisfies the bounds
\begin{align*}
g(z) &\leq N ( 1 + z^{q} ) \quad \text{ for all } \quad z\geq 0, \\
\as{g^\prime (z)} &\leq \tilde N ( 1 + z^{\tilde q} ) \quad \text{ for all } \quad z\geq 0,
\end{align*}
for some constants $N, \tilde N > 0$, $q \geq 1$ and $\tilde q \geq 0$. Finally, assume that the initial data $u_0, k_0$ satisfy
$$
0 < u_0(x), k_0(x) \in W^{1,2} (\Omega) \text{ are periodic in } \Omega.
$$
Then, there exists a unique, global weak solution $(u,k)$ solving problem \eqref{1a} in the sense of \eqref{weaksolnu}-\eqref{weaksolnk} satisfying $u \geq 0, k \geq 0$ so long as there exists $M > 0$ so that
$$
g(z) \leq M ( \mu + \beta z) \quad \text{ for all } z \geq 0.
$$
For problem \eqref{1b}, there exists a unique, global weak solution in the sense of \eqref{weaksolnu}-\eqref{weaksolnk} satisfying $u \geq 0, k \geq 0$ with no further restriction on $g(\cdot)$ other than (\ref{H2}). Moreover, in either case there holds
\begin{align}
u &\in L^{\infty} (0,T; L^p (\Omega) ) \cap C^{\sigma, \sigma/2} (\overline{Q}_T),\quad u_x,\ u_t \in L^{2} ( 0,T ; L^2 (\Omega)), \nonumber \\
k &\in L^{\infty} (0,T; L^p (\Omega) ) \cap C^{\sigma, \sigma/2} (\overline{Q}_T), \quad k_x,\ k_t \in L^{\infty} (0,T; L^2 (\Omega)) , \nonumber
\end{align}
for any $1 < p \leq \infty$, for some $\sigma \in (0,1/2)$, and the initial data is satisfied in the sense of $C(\overline{\Omega})$.
\end{theorem}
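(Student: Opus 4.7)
My approach treats the coupled system as a fixed point problem in $u$: given a candidate $\tilde u \geq 0$ in $C(\overline{Q}_T)$, the $k$-equation is a pointwise-in-$x$ linear ODE, which I solve explicitly by variation of constants to obtain
$$
k[\tilde u](x,t) = k_0(x) e^{-\int_0^t (\mu + \beta \tilde u(x,s))\,ds} + \int_0^t g(\tilde u(x,s)) e^{-\int_s^t (\mu + \beta \tilde u(x,\tau))\,d\tau}\,ds,
$$
which is automatically nonnegative. For \eqref{1a}, the hypothesis $g(z) \leq M(\mu + \beta z)$ yields $\norm{k[\tilde u]}_{L^\infty} \leq \max(\norm{k_0}_{L^\infty}, M)$; for \eqref{1b}, the logistic factor $(\kappa - k)$ gives the analogous bound with $\kappa$ in place of $M$ for free. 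Applying (H3) with $p=\infty$ then produces $\overline{k}_x \in L^\infty(Q_T)$ (though merely measurable, as for the top-hat kernel), so the $u$-equation becomes a linear parabolic equation with bounded drift $\alpha \overline{k}_x$ and inhomogeneity $f(\tilde u)$.

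I would then construct a weak solution $u$ to this linearised equation via Galerkin approximation in the periodic Fourier basis \eqref{6}. The sublinear bound $f(z) \leq f^\prime(0) z$, combined with nonnegativity and the divergence structure of the advection (which conserves $\int u$ up to the reaction), yields a uniform $L^\infty$ estimate by comparison with the scalar ODE $U^\prime = f^\prime(0)U$, $U(0) = \norm{u_0}_{L^\infty}$. With $u$ and $\overline{k}_x$ bounded in $L^\infty$, I would invoke parabolic De Giorgi--Nash--Moser theory for weak solutions with bounded measurable coefficients (as formalised in Ladyzhenskaya--Solonnikov--Ural'ceva, Chapter III) to obtain a uniform Hölder estimate $u \in C^{\sigma, \sigma/2}(\overline{Q}_T)$, which supplies the compactness needed to apply the Schauder fixed-point theorem on a closed bounded convex subset of $C(\overline{Q}_T)$ and thus produces a solution to the coupled system. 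The promised regularity $k_x, k_t \in L^\infty(0,T; L^2)$ follows by differentiating the explicit ODE formula and using $k_0 \in W^{1,2}$ together with the bounds on $u$; this is where the growth assumption $\as{g^\prime(z)} \leq \tilde N(1 + z^{\tilde q})$ enters.

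For uniqueness, given two solutions $(u_1, k_1), (u_2, k_2)$, I would test the $u$-difference against $u_1 - u_2$ in $L^2(\Omega)$: the advective cross-term rearranges, using integration by parts, into a product that is controlled by $\norm{\overline{(k_1 - k_2)}_x}_{L^2}\norm{u_1 - u_2}_{L^2} \leq C\norm{k_1 - k_2}_{L^2}\norm{u_1-u_2}_{L^2}$ via (H3), while the $k$-difference obeys an ODE estimated directly by the Lipschitz bounds on $g$ on the (common) range of the $u_i$. A Grönwall argument on $\norm{u_1-u_2}_{L^2}^2 + \norm{k_1-k_2}_{L^2}^2$ closes the estimate, and since none of the a priori bounds blows up in finite time, the solution extends globally.

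The main obstacle is precisely the low regularity of $\overline{k}_x$ for discontinuous kernels: classical Schauder theory does not apply because the drift is merely $L^\infty$, and one cannot expect $u \in C^1_x$ uniformly. This forces the weak-solution framework and the use of De Giorgi-type Hölder estimates for equations with bounded but merely measurable coefficients. Hypothesis (H3) is the linchpin of the argument, as it upgrades an $L^\infty$ bound on $k$ to an $L^\infty$ bound on $\overline{k}_x$ and thus delivers exactly the regularity needed to access this weaker but sufficient parabolic theory. A secondary technical point is handling the weak $W^{1,2}$ initial data for $k$: the Hölder continuity of $k$ in space-time does not follow from parabolic regularity (since $k$ solves an ODE) but must be extracted from the explicit representation formula, using Hölder continuity of $u$ to transfer regularity into the integrand.
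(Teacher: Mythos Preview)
Your overall strategy---explicit solution of the $k$-ODE, reduction of the $u$-equation to a drift-diffusion problem with bounded measurable drift via (H3), and then Schauder fixed point with compactness coming from parabolic H\"older estimates---is a legitimate alternative to the paper's iteration scheme. The paper instead builds a sequence of classical approximants $(u_n,k_n)$ by alternating between the ODE for $k_n$ (driven by $u_{n-1}$) and the semilinear PDE for $u_n$, and obtains the H\"older bound not from De Giorgi--Nash--Moser but from $L^p$ parabolic estimates for strong solutions followed by the Sobolev embedding $W^{2,1}_r(Q_T)\hookrightarrow C^{\sigma,\sigma/2}$. This last step requires controlling $\overline{(k_n)}_{xx}$ in $L^p$, which the paper gets by applying (H3) a second time to $(k_n)_x$; your route via DGNM avoids this bootstrap, needing only $\overline{k}_x\in L^\infty$.

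There is, however, a genuine gap in your $L^\infty$ bound on $u$. The claim that comparison with $U'=f'(0)U$ works fails because spatially constant functions are \emph{not} supersolutions of divergence-form equations with nontrivial drift: for $U=U(t)$ to be a weak supersolution of $u_t=(du_x+\alpha\,\overline{k}_x u)_x+h$ one would need $U'\int\phi + \alpha U\int \overline{k}_x\,\phi_x \geq \int h\phi$ for all $\phi\geq 0$, and the middle term has no sign (indeed $\overline{k}_{xx}$ need not even exist when $G$ is the top-hat). The ``divergence structure conserves $\int u$'' observation gives an $L^1$ bound, not $L^\infty$. You must therefore obtain the $L^\infty$ bound by a different route: either the $L^p$ energy estimates the paper uses (test with $u^{p-1}$ and absorb the advective cross-term by Cauchy--Schwarz, using only $\|\overline{k}_x\|_\infty<\infty$), or Moser iteration, which is already part of the DGNM machinery you are invoking. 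A related point: linearising the reaction to $f(\tilde u)$ makes the $L^\infty$ estimate depend on $\|\tilde u\|_\infty$, so your Schauder argument as written gives only local-in-time existence; global existence then requires separate a priori bounds on actual solutions, which are exactly the nonlinear $L^p$ estimates above. If you keep $f(u)$ nonlinear in the fixed-point map (linearising only the coupling through $k$), those bounds become available directly and also preserve nonnegativity of $u$, which your linearised version does not guarantee when $f(\tilde u)$ can be negative.
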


\begin{remark}\label{rmk:1.2-1}
    In the theorem above, we generally require some global polynomial growth control over the memory uptake function $g(\cdot)$. From this result, we see that problem \eqref{1a} is significantly more restrictive than problem \eqref{1b} in terms of further growth conditions on $g(\cdot)$. Indeed, the first case requires that the memory uptake behaves roughly linearly, particularly for large arguments, while the second case requires no further growth condition. From our previous discussion of biologically-motivated forms of $g(\cdot)$, we see that the forms $g(u) = \rho u$ and $g(u) = \rho u^2 / (1+cu)$ satisfy the necessary conditions for either system. On the other hand, quadratic growth $g(u) = \rho u^2$ as described in \cite{pottslewis2016} satisfies the conditions for system \eqref{1b} but not \eqref{1a}. This highlights an essential key difference between these two problems in terms of their well-posedness.
\end{remark}

For the detection function $G$ satisfying (H0), the Fourier coefficient $C_n(G)$ is defined for any $n\in \mathbb{N}$ as follows:
\begin{equation}\label{7}
C_n(G)=\frac{1}{2L}\int_{-L}^{L}e^{-\frac{in\pi }{L}y}G(y)dy=\frac{1}{2L}\int_{-L}^{L}\cos\left(\frac{n\pi }{L}y\right)G(y)dy,
\end{equation}
For $n\in\mathbb{N}$, if $C_n(G)\ne 0$, define
 \begin{equation}\label{18}
 \alpha_{n}
 =\frac{-(\mu+\beta)^2}{[g'(1)(\mu+\beta)-\beta\rho]C_n(G)}\left(d-\frac{f'(1)}{l_n}\right).
 \end{equation}
 Note that $C_n(G)$ could be positive or negative. Define 
\begin{equation}\label{2.24}
\begin{aligned}
     \Sigma^+:&=\{n\in \mathbb{N}: \alpha_n>0\},\ \Sigma^-:=\{n\in \mathbb{N}: \alpha_n<0\},\\
      \alpha_r:&=\ds\min_{n\in \Sigma^+} \alpha_n,\ \ \ 
     \alpha_l:=\ds\max_{n\in \Sigma^-} \alpha_n.
\end{aligned}
\end{equation}
Note that $\alpha_r>0>\alpha_l$ as $\sum_{-\infty}^{\infty} |C_n(G)|^2<\infty$. Then we have the following theorem regarding the stability of the unique constant positive steady state $U_*=(1,\rho/(\mu+\beta))$ with respect to \eqref{1a}.
\begin{theorem}\label{thm:2.8}
 Assume that assumptions (H0)-(H2) are satisfied, and let $\Sigma^+, \Sigma^-, \alpha_l, \alpha_r$ be defined as in \eqref{2.24}. Then
\begin{enumerate}
\item[(\romannumeral1)] The constant steady state solution $U_*$ is locally asymptotically stable with respect to \eqref{1a} if $\alpha_l<\alpha<\alpha_r$.
\item[(\romannumeral2)] The constant steady state solution $U_*$ is unstable with respect to \eqref{1a} if $\alpha<\alpha_l$ or $\alpha>\alpha_r$.
\end{enumerate}
\end{theorem}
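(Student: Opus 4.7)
The plan is to linearize \eqref{1a} at $U_* = (1, \rho/(\mu+\beta))$, diagonalize the linearized operator in the Fourier basis from \eqref{6}, and then invoke a principle of linearized stability. Writing $(u,k) = U_* + (\tilde u, \tilde k)$ and using $f(1) = 0$ together with the fact that $\overline{k_*}$ is constant in $x$ (so $(\overline{k_*})_x \equiv 0$), the linearization of \eqref{1a} reduces to
\begin{align*}
\tilde u_t &= d\,\tilde u_{xx} + \alpha\,\overline{\tilde k}_{xx} + f'(1)\,\tilde u, \\
\tilde k_t &= \gamma\,\tilde u - (\mu+\beta)\,\tilde k, \qquad \gamma := \frac{g'(1)(\mu+\beta) - \beta\rho}{\mu+\beta}.
\end{align*}
Convolution with $G$ acts diagonally on each basis function $\phi_n$ with multiplier $C_n(G)$ from \eqref{7}, so the ansatz $(\tilde u, \tilde k) = \sum_{n\in\mathbb{Z}} (c_n(t), d_n(t))\,\phi_n$ decouples the system into the countable family of $2\times 2$ ODE systems $\dot w_n = A_n w_n$ with
$$A_n = \begin{pmatrix} f'(1) - d\,l_n & -\alpha\,l_n\,C_n(G) \\ \gamma & -(\mu+\beta) \end{pmatrix}.$$

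Next I would argue that the spectrum of the full linearized operator is the union of the $\sigma(A_n)$'s together with the essential spectrum, which by the analysis carried out earlier in Section \ref{sec:stabilityanalysis} lies in the open left half-plane. Stability of $U_*$ thus reduces to checking that $\mathrm{Re}\,\lambda < 0$ for every $\lambda \in \sigma(A_n)$ and every $n \in \mathbb{N}_0$. The trace of $A_n$ equals $f'(1) - d\,l_n - (\mu+\beta)$, which is strictly negative by (H1) and the non-negativity of $d,\mu,\beta$; hence by the Routh-Hurwitz criterion mode $n$ is stable iff $\det A_n > 0$. For $n = 0$ one has $\det A_0 = -(\mu+\beta)f'(1) > 0$ automatically, while for $n \geq 1$ a direct expansion gives
$$\det A_n = l_n\bigl[(\mu+\beta)(d - f'(1)/l_n) + \alpha\,\gamma\,C_n(G)\bigr].$$
Dividing the bracketed term by $\gamma C_n(G)$ and paying attention to its sign converts the requirement $\det A_n > 0$ into $\alpha > \alpha_n$ when $n \in \Sigma^-$ and $\alpha < \alpha_n$ when $n \in \Sigma^+$, where $\alpha_n$ is as in \eqref{18}. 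Intersecting these inequalities over all $n$ produces exactly $\alpha_l < \alpha < \alpha_r$, establishing (i); failure of any single inequality produces an $A_n$ with a positive-real-part eigenvalue, yielding (ii).

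The main obstacle is twofold. First, one must confirm that the essential spectrum cannot contain points of non-negative real part; this is non-trivial because the nonlocal term $\alpha (u\overline{k}_x)_x$ couples $u$ to $k$ in a way that is not a compact perturbation of $d\partial_{xx}$, and it is precisely what the essential-spectrum computation carried out earlier in Section \ref{sec:stabilityanalysis} handles. Second, one must lift the mode-by-mode spectral picture to genuine (Lyapunov) stability or instability of $U_*$ as a solution of the full nonlinear PDE-ODE system in the phase space $X$. For (i), the uniform spectral gap afforded by the trace bound combined with the well-posedness guaranteed by Theorem \ref{thm:exist1a} allows one to invoke the standard principle of linearized stability for analytic semigroups generated by coupled parabolic-ODE systems. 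For (ii), it suffices to initialize with a perturbation aligned with the eigenvector of the unstable eigenvalue of the offending $A_n$ and invoke a standard unstable-manifold argument to propagate this linear instability into the nonlinear flow.
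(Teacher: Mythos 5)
Your argument follows essentially the same route as the paper's: linearize at $U_*$, decouple into Fourier modes using Lemmas~\ref{lem:2.3}--\ref{lem:2.4}, analyze the resulting $2\times 2$ characteristic equation (your $A_n$ is precisely the matrix whose characteristic polynomial appears in equation~\eqref{13}), observe that the trace is always strictly negative so Routh--Hurwitz reduces stability of mode $n$ to $\det A_n>0$, and unpack the latter to $\alpha_l<\alpha<\alpha_r$. The paper's proof simply invokes Theorem~\ref{thm:2.5} and Proposition~\ref{pro:2.7} and then asserts the conclusion, so your write-up is in fact somewhat more explicit.

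Two technical points deserve a second look, though neither is fatal and neither is handled more carefully in the paper itself. First, the uniform spectral gap needed for part (i) does \emph{not} follow from the trace bound alone: a uniformly negative trace is compatible with one eigenvalue of $A_n$ drifting to the imaginary axis while the other absorbs the trace. The gap actually follows from the observation that $\lambda_n^+\to h_{k*}=-(\mu+\beta)<0$ as $n\to\infty$ (since $C_n(G)\to 0$ and $l_n\to\infty$), together with the fact that only finitely many modes remain. Second, your phrase ``analytic semigroups generated by coupled parabolic-ODE systems'' overstates the regularity: the $k$-equation has no spatial smoothing, so $\mathcal{L}_*(\alpha)$ generates only a $C_0$ semigroup, not an analytic one, and the passage from spectrum in the open left half-plane to nonlinear asymptotic stability requires a spectral-determinacy result adapted to PDE--ODE couplings (as studied in the works of Marciniak-Czochra et al.\ that the paper cites). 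Since the paper's own proof does not address either point, your proposal sits at the same level of rigor and is the same approach.
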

Similarly we define
 \begin{equation}\label{3.31}
 \widehat{\alpha}_{n} =\frac{-(\rho+\mu+\beta)^2}{\kappa [g'(1)(\mu+\beta)-\beta\rho]C_n(G)}\left(d-\frac{f'(1)}{l_n}\right),
 \end{equation}
and 
\begin{equation}\label{3.33}
\begin{aligned}
     \widehat{\Sigma}^+:&=\{n\in \mathbb{N}: \widehat{\alpha}_n>0\},\ \widehat{\Sigma}^-:=\{n\in \mathbb{N}: \widehat{\alpha}_n<0\},\\
      \widehat{\alpha}_r:&=\ds\min_{n\in \widehat{\Sigma}^+} \widehat{\alpha}_n,\ \ \ 
     \widehat{\alpha_l}:=\ds\max_{n\in \widehat{\Sigma}^-} \widehat{\alpha}_n.
\end{aligned}
\end{equation}
Then we have the stability results for the unique constant positive steady state $\widehat{U_*}=(1,\rho\kappa/(\rho+\mu+\beta))$ with respect to \eqref{1b}.
\begin{theorem}\label{thm:2.9}
 Assume that assumptions (H0)-(H2) are satisfied, and let $\widehat{\Sigma}^+, \widehat{\Sigma}^-, \widehat{\alpha}_l, \widehat{\alpha}_r$ be defined as in \eqref{3.33}. Then
\begin{enumerate}
\item[(\romannumeral1)] The constant steady state solution $\widehat{U}_*$ is locally asymptotically stable with respect to \eqref{1b} if $\widehat{\alpha}_l<\alpha<\widehat{\alpha}_r$.
\item[(\romannumeral2)] The constant steady state solution $\widehat{U}_*$ is unstable with respect to \eqref{1b} if $\alpha<\widehat{\alpha}_l$ or $\alpha>\widehat{\alpha}_r$.
\end{enumerate}
\end{theorem}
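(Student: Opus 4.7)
The plan is to follow the linearization-and-Fourier-mode argument that must underlie Theorem \ref{thm:2.8}, replacing only those coefficients that change because the uptake equation in \eqref{1b} carries the saturation factor $\kappa - k$. First I would verify that $\widehat{U}_* = (1,\rho\kappa/(\rho+\mu+\beta))$ is the unique positive constant steady state by solving $g(1)(\kappa - k_*) = (\mu+\beta)k_*$, then linearize via $u = 1+U$, $k = k_* + K$. The $u$-equation linearization is identical to the \eqref{1a} case, $U_t = dU_{xx} + \alpha\overline{K}_{xx} + f'(1)U$, since $k_*$ is constant. Differentiating $g(u)(\kappa-k) - (\mu+\beta u)k$ at $(1,k_*)$ yields
\begin{equation*}
K_t = \bigl[g'(1)(\kappa - k_*) - \beta k_*\bigr]U - (\rho+\mu+\beta)K,
\end{equation*}
and substitution of $k_*$ collapses the coupling coefficient to $A_\kappa := \kappa[g'(1)(\mu+\beta) - \beta\rho]/(\rho+\mu+\beta)$. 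The net effect versus \eqref{1a} is the substitution $\mu+\beta \mapsto \rho+\mu+\beta$ together with the extra factor $\kappa$, which is exactly the difference visible between \eqref{18} and \eqref{3.31}.

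The second step is the Fourier reduction. Expanding $U, K$ in the basis $\{\phi_n\}$ of $L^2_{per}(-L,L)$ and using the convolution identity $\overline{\phi_n} = C_n(G)\phi_n$ decouples the linearization into a family of $2\times 2$ ODE systems with matrix
\begin{equation*}
M_n = \begin{pmatrix} f'(1) - dl_n & -\alpha l_n C_n(G) \\ A_\kappa & -(\rho+\mu+\beta) \end{pmatrix}.
\end{equation*}
By (H1), $\mathrm{tr}(M_n) = f'(1) - dl_n - (\rho+\mu+\beta) < 0$ for every $n\geq 0$, so stability reduces to checking $\det(M_n) > 0$. The $n=0$ mode is triangular with eigenvalues $f'(1)$ and $-(\rho+\mu+\beta)$, both strictly negative, so it is always stable. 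For $n\geq 1$ the inequality $\det(M_n) > 0$ rearranges to $\alpha\,C_n(G)A_\kappa > -(\rho+\mu+\beta)\bigl(d - f'(1)/l_n\bigr)$, and the identity $(\rho+\mu+\beta)A_\kappa = \kappa[g'(1)(\mu+\beta) - \beta\rho]$ shows that the right-hand side equals $\widehat{\alpha}_n\cdot C_n(G)A_\kappa$. Splitting by the sign of $C_n(G)A_\kappa$, equivalently by whether $n\in\widehat{\Sigma}^+$ or $n\in\widehat{\Sigma}^-$, yields the simultaneous conditions $\alpha > \widehat{\alpha}_n$ for $n\in\widehat{\Sigma}^-$ and $\alpha < \widehat{\alpha}_n$ for $n\in\widehat{\Sigma}^+$. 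These combine to the window $\widehat{\alpha}_l < \alpha < \widehat{\alpha}_r$ in \eqref{3.33}; once $\alpha$ crosses either endpoint, some $M_n$ acquires an eigenvalue with positive real part, giving instability.

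The last step is to lift these mode-wise eigenvalue statements back to the full linearized PDE operator on $X$. Here I would invoke the spectral decomposition developed in Section \ref{sec:stabilityanalysis}: the spectrum of the linearization is the countable union $\bigcup_n \sigma(M_n)$ together with an essential spectrum which that section shows lies strictly in the open left half-plane. Part (i) then follows from the standard principle of linearized stability for sectorial operators once the point spectrum has uniformly negative real part, and part (ii) is immediate from the existence of an unstable $M_n$-eigenvalue. The main obstacle is not the eigenvalue algebra, which is a line-for-line substitution from the proof of Theorem \ref{thm:2.8}, but rather checking that the essential-spectrum argument of Section \ref{sec:stabilityanalysis} transfers without modification to \eqref{1b}. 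I expect it to, since the $\kappa - k$ saturation only perturbs the zeroth-order coefficients in the $K$-equation and leaves the nonlocal principal part of the operator unchanged; any required uniform bounds on these coefficients follow from the identity $0 < k_* < \kappa$ and (H2).
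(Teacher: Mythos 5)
Your proposal is correct and follows essentially the same route as the paper: the paper simply records $\widehat{U}_*$, computes $\widehat{h}_{u*}=\kappa[g'(1)(\mu+\beta)-\beta\rho]/(\mu+\beta+\rho)$ and $\widehat{h}_{k*}=-(\mu+\beta+\rho)$ (your $A_\kappa$ and trace entry), and then states that the remainder repeats the spectral analysis of Theorem~\ref{thm:2.8} verbatim, which is exactly the substitution argument you spell out. The only small caveat is terminological: what you call the essential spectrum is, in Theorem~\ref{thm:2.5}, the single eigenvalue $\widehat{h}_{k*}=-(\rho+\mu+\beta)$ of infinite multiplicity, which is indeed strictly negative, so your conclusion stands.
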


The quantities $\alpha_n$ ($\widehat{\alpha}_n$) defined in \eqref{18} (\eqref{3.31}) are the critical parameter values such that the stability of the spatially-constant steady state changes, and they are also bifurcation points for \eqref{1a} (\eqref{1b}) where spatially non-homogeneous steady state solutions bifurcate from the constant ones as found in the following Theorems.
\begin{theorem}\label{thm:3.4}
Assume that assumptions (H0)-(H2) are satisfied, and $n\in {\mathbb N}$ such that $C_n(G)\ne 0$. Then near $(\alpha,U)=(\alpha_n,U_*)$, problem \eqref{1a} has a line of trivial solutions $\Gamma_0:=\{(\alpha,U_*):\alpha\in {\mathbb R}\}$ and a family of  non-constant steady state solutions bifurcating from $\Gamma_0$  at $\alpha=\alpha_n$ in a  form of 
	\begin{equation}
	\Gamma_n : =\{(\alpha_n(s),u_n(s,\cdot),k_n(s,\cdot)) : -\delta<s<\delta\}
	\end{equation}
	with
	\begin{equation}
	\begin{cases}
	\alpha_n(s)=\alpha_{n}+\alpha_n'(0)s+o(s),\\
	\ds u_n(s,x)=1+s\cos\left(\dfrac{n\pi}{L}x\right)+s^2 z_{1n}(s,x),\\
	\ds k_n(s,x)=\dfrac{\rho}{\mu+\beta}+s\dfrac{g'(1)(\mu+\beta)-\beta \rho}{(\mu+\beta)^2}\cos\left(\dfrac{n\pi}{L}x\right)+s^2 z_{2n}(s,x),
	\end{cases}
	\end{equation}
where $z_n(s)=(z_{1n}(s,\cdot),z_{2n}(s,\cdot))$ satisfies
$\ds\lim\limits_{s\rightarrow0}\lVert z_n(s)\rVert=0$.
Moreover the set of steady state solutions of \eqref{1a} near $(\alpha_n,U_*)$ consists precisely of the curves $\Gamma_0$ and $\Gamma_n$.
\end{theorem}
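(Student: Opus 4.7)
My plan is to apply the Crandall--Rabinowitz bifurcation theorem from a simple eigenvalue to the steady-state operator associated to \eqref{1a}. To remove the translation symmetry of the periodic problem (which would otherwise force a two-dimensional kernel containing both $\cos(n\pi x/L)$ and $\sin(n\pi x/L)$), I restrict to the subspace $X_e \subset X$ of functions even about $x=0$; the PDE is reflection-equivariant, so this restriction is consistent. Define $F : \mathbb{R} \times X_e \to Y_e$ by
\begin{equation*}
F(\alpha, (u,k)) = \bigl(d u_{xx} + \alpha (u \overline{k}_x)_x + f(u),\; g(u) - (\mu + \beta u)k\bigr),
\end{equation*}
so that $F(\alpha, U_*) \equiv 0$ gives the trivial branch $\Gamma_0$.

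Linearizing at $(\alpha_n, U_*)$ yields
\begin{equation*}
L(\phi, \psi) = \bigl(d\phi_{xx} + \alpha_n \overline{\psi}_{xx} + f'(1)\phi,\; A\phi - (\mu+\beta)\psi\bigr),\qquad A = \frac{g'(1)(\mu+\beta) - \beta\rho}{\mu+\beta}.
\end{equation*}
The algebraic second component immediately gives $\psi = c\phi$ with $c = A/(\mu+\beta) = [g'(1)(\mu+\beta)-\beta\rho]/(\mu+\beta)^2$, reducing the kernel problem to the scalar nonlocal equation $d\phi_{xx} + \alpha_n c\,\overline{\phi}_{xx} + f'(1)\phi = 0$. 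Expanding $\phi$ in the cosine basis $\{\cos(m\pi x/L)\}_{m\ge 0}$ and using $\overline{\cos(m\pi x/L)} = C_m(G)\cos(m\pi x/L)$ diagonalizes $L$: the $m$-th mode lies in $\mathcal{N}(L)$ precisely when $-(d + \alpha_n c\, C_m(G))l_m + f'(1) = 0$, i.e.\ when $\alpha_n = \alpha_m$. Since $C_m(G)\to 0$ by Riemann--Lebesgue, this multiplier is bounded away from zero for all but finitely many $m$, so $L$ is Fredholm of index zero; under the (generic) simplicity assumption $\alpha_m \neq \alpha_n$ for $m \neq n$, the kernel of $L$ in $X_e$ is one-dimensional and spanned by
\begin{equation*}
V_0 = \bigl(\cos(n\pi x/L),\; c\cos(n\pi x/L)\bigr),
\end{equation*}
which already matches the leading-order eigenfunction in the stated expansion.

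For the transversality condition, I compute $\mathcal{N}(L^*)$: since convolution against the even kernel $G$ is self-adjoint on $L^2$, a parallel Fourier argument shows $\mathcal{N}(L^*) = \spn\{V_0^*\}$ with $V_0^* = (\cos(n\pi x/L),\; \gamma \cos(n\pi x/L))$ for an explicit constant $\gamma$. Now
\begin{equation*}
F_{\alpha U}(\alpha_n, U_*)[V_0] = \bigl(\overline{c\cos(n\pi x/L)}_{xx},\; 0\bigr) = \bigl(-c\, l_n C_n(G)\cos(n\pi x/L),\; 0\bigr),
\end{equation*}
and pairing with $V_0^*$ gives a nonzero multiple of $c\, l_n C_n(G)$, which is nonzero because $C_n(G) \neq 0$ by hypothesis and $c \neq 0$ whenever $\alpha_n$ in \eqref{18} is defined. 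The Crandall--Rabinowitz theorem then produces the smooth local branch $\Gamma_n$ of the asserted form, with the expansion coefficients of $\alpha_n(s),\ u_n(s,x),\ k_n(s,x)$ given by the standard bifurcation formulas applied to $V_0$; the local uniqueness clause of the theorem furnishes the final assertion that $\Gamma_0 \cup \Gamma_n$ exhausts all steady states near $(\alpha_n, U_*)$.

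The main obstacle is establishing the Fredholm property and one-dimensionality of $\mathcal{N}(L)$: the nonlocal term $\alpha_n \overline{\psi}_{xx}$ is of the same differential order as the principal part $d\phi_{xx}$, so it is \emph{not} a lower-order compact perturbation in general and standard elliptic theory does not apply directly. On the torus, however, the exact Fourier diagonalization above saves the argument: the convolution is a Fourier multiplier with symbol $C_m(G)$, and its Riemann--Lebesgue decay restores the dominance of $-dl_m$ for large $m$, yielding Fredholmness with explicit control over the kernel dimension. The rest of the proof (adjoint computation, verification of transversality, and extraction of the leading-order expansion) is bookkeeping once this spectral picture is in place.
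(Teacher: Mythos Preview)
Your proposal is correct and follows essentially the same route as the paper: restrict to the even subspace to kill the translation degeneracy, diagonalize the linearization in the cosine basis using $G\ast\phi_m=C_m(G)\phi_m$, identify the one-dimensional kernel spanned by $(1,-h_{u*}/h_{k*})\cos(n\pi x/L)$, and verify the Crandall--Rabinowitz transversality condition via the same computation $\partial_{\alpha U}F(\alpha_n,U_*)[V_0]=(-c\,l_nC_n(G)\cos(n\pi x/L),0)\notin\mathcal{R}(\mathcal{L}_n)$. Your explicit invocation of Riemann--Lebesgue to secure the Fredholm property, and your flagging of the implicit simplicity hypothesis $\alpha_m\ne\alpha_n$ for $m\ne n$, are points the paper leaves tacit; otherwise the arguments coincide.
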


\begin{theorem}\label{thm:3.5}
Assume that assumptions (H0)-(H2) are satisfied, and $n\in {\mathbb N}$ such that $C_n(G)\ne 0$. Then near $(\widehat{\alpha},\widehat{U})=(\widehat{\alpha}_n,\widehat{U}_*)$, equation \eqref{1b} has a line of trivial solutions $\widehat{\Gamma}_0:=\{(\alpha,\widehat{U}_*):\alpha\in {\mathbb R}\}$ and a family of non-constant steady state solutions bifurcating from $\widehat{\Gamma}_0$ at $\alpha=\widehat{\alpha}_n$ in a form of 
	\begin{equation}
	\widehat{\Gamma}_n : =\{(\widehat{\alpha}_n(s),\widehat{u}_n(s,\cdot),\widehat{k}_n(s,\cdot)) : -\delta<s<\delta\}
	\end{equation}
	with
	\begin{equation}
	\begin{cases}
	\widehat{\alpha}_n(s)=\widehat{\alpha}_{n}+\widehat{\alpha}_n'(0)s+o(s),\\
	\ds \widehat{u}_n(s,x)=1+s\cos\left(\dfrac{n\pi}{L}x\right)+s^2 \widehat{z}_{1n}(s,x),\\
	\ds \widehat{k}_n(s,x)=\dfrac{\rho}{\rho+\mu+\beta}+\kappa s\dfrac{g'(1)(\mu+\beta)-\beta \rho}{(\mu+\beta+\rho))^2}\cos\left(\dfrac{n\pi}{L}x\right)+s^2 \widehat{z}_{2n}(s,x),
	\end{cases}
	\end{equation}
	where $\widehat{z}_n(s)=(\widehat{z}_{1n}(s,\cdot),\widehat{z}_{2n}(s,\cdot))$ satisfies
$\ds\lim\limits_{s\rightarrow0}\lVert \widehat{z}_n(s)\rVert=0$.
Moreover the set of steady state solutions of \eqref{1b} near $(\widehat{\alpha}_n,\widehat{U}_*)$ consists precisely of the curves $\widehat{\Gamma}_0$ and $\widehat{\Gamma}_n$.
\end{theorem}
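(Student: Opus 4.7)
The plan is to apply the Crandall--Rabinowitz simple-eigenvalue bifurcation theorem to the steady-state problem for \eqref{1b} in direct parallel with the proof of Theorem \ref{thm:3.4}; the only substantive change is that the linearization of the $k$-equation now carries an extra factor $(\kappa - k)$. At steady state the $k$-equation is algebraic: $g(u)(\kappa-k) = (\mu+\beta u)k$, which solves uniquely as $k = K(u) := g(u)\kappa/(g(u)+\mu+\beta u)$. Substituting into the $u$-equation reduces the system to the scalar problem
$$
\widehat F(\alpha, u) := d u_{xx} + \alpha\bigl(u\,\overline{K(u)}_x\bigr)_x + f(u) = 0
$$
on $H^2_{\text{per}}(-L,L)$. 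Since $\widehat F(\alpha, 1) = 0$ for every $\alpha \in \mathbb R$, I obtain the trivial branch $\widehat\Gamma_0$, anchored at $\widehat U_* = (1, K(1)) = (1, \rho\kappa/(\rho+\mu+\beta))$.

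The Fréchet derivative at $u = 1$ is $D_u \widehat F(\alpha, 1)[v] = d v_{xx} + \alpha K'(1)\,\overline v_{xx} + f'(1)\,v$, and a direct calculation gives
$$
K'(1) = \frac{\kappa\,[\,g'(1)(\mu+\beta) - \beta\rho\,]}{(\rho+\mu+\beta)^2},
$$
which is exactly the coefficient that appears in the stated expansion of $\widehat k_n$. In the Fourier basis, $\overline{\phi_n}_{xx} = -l_n C_n(G)\phi_n$, so the $n$th-mode symbol is $-dl_n + f'(1) - \alpha l_n C_n(G) K'(1)$, vanishing precisely at $\alpha = \widehat\alpha_n$ by \eqref{3.31}. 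Under the generic non-resonance condition $\widehat\alpha_m \ne \widehat\alpha_n$ for $m \ne n$, the kernel $\mathcal N(D_u \widehat F(\widehat\alpha_n, 1))$ is one-dimensional, spanned by $\phi_n^r(x) := \cos(n\pi x/L)$.

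For the remaining Crandall--Rabinowitz hypotheses, $D_u \widehat F(\alpha, 1)$ differs from the isomorphism $d\partial_x^2 + f'(1) : H^2_{\text{per}} \to L^2_{\text{per}}$ (invertible by $f'(1) < 0$ via (H1)) by a compact operator, hence is Fredholm of index zero; in particular $\mathcal R(D_u \widehat F(\widehat\alpha_n, 1))$ has codimension one. For transversality, $D_\alpha D_u \widehat F(\widehat\alpha_n, 1)[\phi_n^r] = -l_n C_n(G) K'(1) \cos(n\pi x/L)$, and pairing against $\phi_n^r$ yields a nonzero scalar provided $C_n(G) \ne 0$ and $K'(1) \ne 0$ --- the latter being precisely the condition for $\widehat\alpha_n$ in \eqref{3.31} to be finite. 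Crandall--Rabinowitz then produces the smooth curve $\widehat\Gamma_n$ tangent to $\phi_n^r$ at $s = 0$, and the $k$-component $\widehat k_n(s, x) = K(\widehat u_n(s, x))$ has the Taylor expansion $K(1) + s K'(1) \cos(n\pi x/L) + O(s^2)$, matching the stated form with $\widehat z_n(s) \to 0$; the local uniqueness clause of the theorem yields that $\widehat\Gamma_0 \cup \widehat\Gamma_n$ exhausts all steady states near $(\widehat\alpha_n, \widehat U_*)$.

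The main obstacle I anticipate is confirming that $v \mapsto \overline v_{xx}$ defines a compact operator $H^2_{\text{per}} \to L^2_{\text{per}}$ under only the minimal kernel hypothesis (H0). This follows from $(\overline v_{xx})_n = -l_n C_n(G) v_n$ combined with $|C_n(G)| \to 0$ (Riemann--Lebesgue, since $G \in L^1_{\text{per}}$) and $l_n v_n \in \ell^2$ (from $v \in H^2_{\text{per}}$), which gives boundedness and compactness simultaneously. This handles both the discontinuous top-hat kernel (for which $|C_n(G)| = O(1/n)$) and the smooth Gaussian and exponential kernels, and is the direct analog of the regularity care already encoded in (H3) for the well-posedness Theorem \ref{thm:exist1a}.
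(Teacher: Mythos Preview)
Your scalar reduction via $k = K(u) := g(u)\kappa/(g(u)+\mu+\beta u)$ is a genuinely different route from the paper's. The paper keeps the full two-component map $F:\mathbb{R}\times X^s\to Y^s$ with $X=H^2_{per}\times L^2_{per}$, computes the linearization $\mathcal{L}_n$ on the pair $(\phi,\psi)$, identifies its kernel and adjoint kernel explicitly, and verifies the Crandall--Rabinowitz hypotheses at the system level. Your reduction exploits the fact that the $k$-equation at steady state is algebraic with no spatial derivatives, collapsing the problem to a single scalar equation in $u$; this is cleaner and makes the Fredholm and transversality checks shorter, and your identification $K'(1) = -\widehat h_{u*}/\widehat h_{k*}$ is exactly the coefficient the paper arrives at by other means.

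There is, however, a real gap in your kernel-dimension claim. The operator $D_u\widehat F(\widehat\alpha_n,1)[v] = dv_{xx} + \widehat\alpha_n K'(1)\,\overline v_{xx} + f'(1)v$ is translation-invariant on the periodic domain, so at $\alpha=\widehat\alpha_n$ both $\cos(n\pi x/L)$ and $\sin(n\pi x/L)$ lie in the kernel: the null space is two-dimensional on $H^2_{per}$, not one-dimensional, and non-resonance in $m\ne n$ does not help with this. The paper handles exactly this issue by restricting the map to the even subspaces $X^s,Y^s$, which kills the sine mode and yields $\dim\mathcal N^s(\mathcal L_n)=1$. You need the same restriction (or an equivalent quotient by the translation symmetry) before invoking Crandall--Rabinowitz; once you add that, your argument goes through. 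Your compactness argument for $v\mapsto\overline v_{xx}$ via Riemann--Lebesgue is correct and in fact more explicit than anything the paper writes down for this step.
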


We also classify the nature of the bifurcation at these critical values, see Theorems \ref{thm:4.6} and \ref{thm:4.7}. Together, these results show that the central quantity governing spontaneous pattern formation is the advective strength towards or away from memorised areas, encapsulated in $\alpha$.  Since $\alpha_l<0<\alpha_r$ holds necessarily from the assumptions made, the key driver of pattern formation is that $\alpha$ is of sufficient magnitude.  If $\alpha$ is negative, then we have attraction towards remembered areas, similar to many nonlocal models of biological aggregation (e.g. \cite{carrillo2019aggregation}).  Some examples are found in Figures \ref{fig:1b}, \ref{fig:2b} and \ref{fig:3b}.  On the flip-side, positive $\alpha$ indicates repulsion from remembered areas and leads to patterns such as in Figures \ref{fig:1c}, \ref{fig:2c} and \ref{fig:3c}. Interestingly, there is a lack of symmetry in the sense that $-\alpha_l \neq \alpha_r$ in general. In fact, $\as{\alpha_l}$ and $\alpha_r$ do not even remain ordered! This can be seen in Figures \ref{fig:1}, \ref{fig:2} and \ref{fig:3}.

Moreover, Theorems \ref{thm:3.4} and \ref{thm:3.5} show that, close to the bifurcation point, the steady state consists of a single cosine wave, and Theorems \ref{thm:4.6} and \ref{thm:4.7} give conditions on their stability.  An example of the stable case is shown in Figures \ref{fig:2b}-\ref{fig:2c}, where a cosine wave emerges just beyond the bifurcation point, but diverges from this description as the bifurcation parameter is increased further.  For the unstable case we do not see the small-amplitude cosine wave at all as the bifurcation threshold is crossed.  Rather, the  solution jumps to a higher amplitude pattern as found in Figure \ref{fig:1c}.

\section{Well-posedness}\label{sec:wellposedness}

In this section we prove the existence of a unique global solution to a problem more general than system \eqref{1a} or \eqref{1b} for detection functions $G(\cdot)$ satisfying (H0) and (H3), which includes the top-hat kernel. The restrictions on the growth terms $f(\cdot)$ and $g(\cdot)$ are compatible with the choices made in Section \ref{sec:top-hatanalysis}. The challenge is in the treatment of the (potentially) discontinuous kernel appearing inside the nonlocal advection term. To overcome these difficulties, we abuse a useful `embedding' property of the top-hat kernel in one spatial dimension. This allows one to obtain {\it a priori} estimates on the solution $k(x,t)$ from which we obtain appropriate uniform bounds on $u(x,t)$ and higher derivatives. We begin with some preliminary estimates.

\subsection{Preliminary Estimates}

In general, we assume $G (\cdot)$ satisfies (H3); we first show this holds for the top-hat kernel.
\begin{lemma}\label{kernelbound}
Let $1 \leq p \leq \infty$ and fix $T>0$. Suppose $z(\cdot,t) \in L^p (\Omega)$ is periodic in $\Omega$ for almost every $t \in [0,T]$. Denote by $\overline{z}_x(\cdot,t)$ the spatial convolution \eqref{1.3} of $z_x$ with the top-hat detection function \eqref{detectionkernelT}. Then, for almost every $t \in [0,T]$ there holds
\begin{align}\label{bnd1.1}
\norm{\overline{z}_x (
\cdot,t)}_{L^p (\Omega)} &\leq  \frac{1}{R} \norm{z(\cdot,t)}_{L^p (\Omega)}.
\end{align}
In particular, we have that
\begin{align}\label{bnd1.2}
\esssup_{t \in (0,T)}\norm{\overline{z}_x (\cdot,t)}_{L^p (\Omega)} &\leq R^{-1} \esssup_{t \in (0,T)} \norm{z (\cdot,t)}_{L^p (\Omega)},
\end{align}
for any $T>0$ fixed. 
\end{lemma}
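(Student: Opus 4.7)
The plan is to exploit the particularly simple structure of the top-hat convolution on a periodic domain to obtain a closed-form expression for $\overline{z}_x(\cdot,t)$ in terms of pointwise shifts of $z$, and then to invoke the triangle inequality together with the translation invariance of the $L^p$ norm on $\Omega$.

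First, I would note that on $(-L,L)$ the top-hat kernel satisfying (H0) carries the scaling prefactor $L/R$ (corresponding to the normalization prefactor $2L$ indicated after the statement of (H0)), so that a direct computation from \eqref{1.3} yields
\begin{equation*}
\overline{z}(x,t) = \frac{1}{2R}\int_{x-R}^{x+R} z(y,t)\,dy,
\end{equation*}
where $z(\cdot,t)$ is understood to be extended $2L$-periodically to $\mathbb{R}$. Thus the nonlocal operator reduces to a running spatial average of radius $R$.

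The heart of the argument is the identity
\begin{equation*}
\overline{z}_x(x,t) = \frac{z(x+R,t) - z(x-R,t)}{2R},
\end{equation*}
which for smooth periodic $z(\cdot,t)$ follows at once from the fundamental theorem of calculus applied to the previous display. For a general $z(\cdot,t) \in L^p(\Omega)$ this identity holds in the weak (distributional) sense, and I would justify it by a standard density argument: for $p < \infty$ one approximates $z(\cdot,t)$ in $L^p$ by smooth periodic functions and passes to the limit using Young's inequality for convolutions and the continuity of the translation operator on $L^p$, while for $p = \infty$ one tests directly against $C^\infty$ periodic test functions and applies Fubini.

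Once this pointwise-in-$t$ identity is in hand, Minkowski's inequality together with the invariance of the $L^p(\Omega)$ norm under translation on a periodic domain immediately gives
\begin{equation*}
\|\overline{z}_x(\cdot,t)\|_{L^p(\Omega)} \leq \frac{1}{2R}\bigl(\|z(\cdot+R,t)\|_{L^p(\Omega)} + \|z(\cdot-R,t)\|_{L^p(\Omega)}\bigr) = \frac{1}{R}\|z(\cdot,t)\|_{L^p(\Omega)},
\end{equation*}
which is precisely \eqref{bnd1.1}; taking the essential supremum over $t \in (0,T)$ then delivers \eqref{bnd1.2}. I do not expect any serious obstacle here: the only mildly technical point is the weak-derivative justification for low-regularity $z$, handled as above, after which the bound follows in a completely $p$-uniform fashion.
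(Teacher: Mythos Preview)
Your proof is correct and follows essentially the same approach as the paper: both hinge on the identity $\overline{z}_x(x,t) = (z(x+R,t) - z(x-R,t))/(2R)$ together with translation invariance of the $L^p$ norm under periodicity. The only cosmetic difference is that the paper bounds $|z(x+R)-z(x-R)|^p \le 2^{p-1}(|z(x+R)|^p + |z(x-R)|^p)$ under the integral, whereas you invoke Minkowski's inequality directly on the norms; your route is in fact slightly cleaner (it handles $p=\infty$ without a separate limiting step) and your added remark on the weak-derivative justification for low-regularity $z$ is a point the paper leaves implicit.
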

\begin{remark}
    If $z(\cdot,t)$ is continuous, we may replace ``almost every" with ``every" and $\esssup$ with $\sup$. This will be the case in the forthcoming results. 
\end{remark}
\begin{proof}
First, we drop the dependence on $t$ for notational brevity. The result essentially follows from an elementary inequality and the fact that 
$$
\overline{z}_x = \frac{z(x+R) - z(x-R)}{2R}
$$ 
when $G(\cdot)$ is the top hat detection function. Consequently,
\begin{align}\label{Y1.1}
\norm{\overline{z}_x}_{L^p (\Omega)} ^p &= \frac{1}{(2R)^p}  \int_\Omega \as{z(x+R) - z(x-R)}^p {\rm d}x \nonumber \\
&\leq\frac{2^{p-1}}{(2R)^p} \int_\Omega \left( \as{z(x+R)}^p + \as{z(x-R)}^p \right) {\rm d}x \leq  \frac{1}{R^p} \norm{z}_{L^p (\Omega)} ^p ,
\end{align}
where we have used the periodicity of $z$ in $\Omega$. This proves \eqref{bnd1.1}. Taking the $p^{\text{th}}$ roots of both sides followed by the supremum over $t \in (0,T)$ yields \eqref{bnd1.2}.
\end{proof}
\begin{remark}
    The same $L^p$-type estimate holds for the Gaussian and exponential detection functions \eqref{detectionkernelG}-\eqref{detectionkernelE} as well. These cases are easier since the kernels themselves are appropriately differentiable and bounded. We omit the details.
\end{remark}

Next, we obtain $L^p (\Omega)$ bounds on a function $k(x,t)$ when $k$ solves a linear, first order differential equation for each $x \in \Omega$.
\begin{lemma}\label{ODEbounds1}
Let $0 \lneqq w(x,t) \in C^{1,1} (\overline{Q}_T) \cap L^{1,1} (Q_T)$ be periodic in $\Omega$ for all $t \in (0,T)$ and assume $1 < p \leq \infty$. For each $x \in \Omega$, let $k(x,\cdot)$ solve the ordinary differential equation
\begin{equation}\label{kProto}
\frac{d k}{d t} = g_1(w) - g_2(w) k
\end{equation}
where $g_1, g_2 \in C^1 (\mathbb{R}^+)$ are nonnegative and $k(x,0) = k_0 (x) \in W^{1,2} (\Omega)$. Then, if there exists $M>0$ such that
\begin{equation}\label{g-bound-1}
g_1(z) \leq M g_2 (z) \quad \text{ for all } z \geq 0,
\end{equation}
there holds
\begin{equation}\label{kLinfest}
\sup_{t \in [0,T]} \norm{k (\cdot,t)}_{L^\infty (\Omega)} \leq M + \norm{k_0}_{L^\infty (\Omega)}
\end{equation}
\end{lemma}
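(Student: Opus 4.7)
The plan is to treat \eqref{kProto} pointwise in $x$ as a scalar linear ODE in $t$ with time-dependent coefficients, and then extract a uniform bound from the explicit representation formula. Fix $x \in \Omega$. Since $w \in C^{1,1}(\overline{Q}_T)$ and $g_1, g_2 \in C^1(\mathbb{R}^+)$, the coefficients $t \mapsto g_1(w(x,t))$ and $t \mapsto g_2(w(x,t))$ are continuous on $[0,T]$, so classical ODE theory gives a unique solution. Using the integrating factor $\exp\left(\int_0^t g_2(w(x,\tau))\,d\tau\right)$, I would write
\begin{equation*}
k(x,t) = k_0(x)\, e^{-\int_0^t g_2(w(x,\tau))\,d\tau} + \int_0^t g_1(w(x,s))\, e^{-\int_s^t g_2(w(x,\tau))\,d\tau}\,ds.
\end{equation*}

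The first term is straightforward: since $g_2 \geq 0$, the exponential factor lies in $(0,1]$, so its absolute value is at most $|k_0(x)| \leq \norm{k_0}_{L^\infty(\Omega)}$. For the second term, I would invoke the hypothesis \eqref{g-bound-1} to estimate
\begin{equation*}
\int_0^t g_1(w(x,s))\, e^{-\int_s^t g_2(w(x,\tau))\,d\tau}\,ds \leq M \int_0^t g_2(w(x,s))\, e^{-\int_s^t g_2(w(x,\tau))\,d\tau}\,ds.
\end{equation*}
The key observation is that the integrand on the right is exactly $\frac{d}{ds}\exp\bigl(-\int_s^t g_2(w(x,\tau))\,d\tau\bigr)$, so the integral telescopes to $1 - e^{-\int_0^t g_2(w(x,\tau))\,d\tau} \leq 1$. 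This bounds the second term by $M$.

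Combining these two estimates gives $|k(x,t)| \leq M + \norm{k_0}_{L^\infty(\Omega)}$ for every $(x,t) \in \Omega \times [0,T]$, and taking the supremum over $x$ and then over $t$ yields \eqref{kLinfest}. Honestly, there is no serious obstacle here: the assumption $g_1 \leq M g_2$ is tailor-made so that the nonlinear forcing is controlled by the dissipation it rides on, and the representation formula converts this into the telescoping bound. The regularity hypotheses on $w$ and the $W^{1,2}$ assumption on $k_0$ (with the Sobolev embedding giving $k_0 \in L^\infty(\Omega)$ in one spatial dimension) are more than enough to make the pointwise ODE argument rigorous and to conclude with a bound uniform in $x$.
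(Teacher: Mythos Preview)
Your proof is correct and, in fact, more direct than the paper's. The paper instead computes $\frac{d}{dt}\frac{1}{p}\norm{k(\cdot,t)}_{L^p(\Omega)}^p$, uses Young's inequality with carefully chosen exponents to absorb the cross term $k^{p-1}g_1(w)$ into the dissipation $k^p g_2(w)$, arriving at
\[
\frac{d}{dt}\int_\Omega k^p\,{\rm d}x \leq M^p \norm{g_2(w(\cdot,t))}_{L^1(\Omega)},
\]
integrates in time, takes $p$-th roots, and then sends $p\to\infty$ to obtain the $L^\infty$ bound. Your approach bypasses this entirely by exploiting the explicit variation-of-constants formula for the scalar linear ODE and the telescoping identity $\int_0^t g_2(w)\,e^{-\int_s^t g_2}\,ds = 1 - e^{-\int_0^t g_2}$, which is arguably the natural way to see the bound. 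The paper's energy-method route is more in the spirit of the PDE estimates that follow (and would generalise more readily if $k$ satisfied a genuine PDE rather than a parametrised ODE), but for this lemma your argument is both shorter and more transparent.
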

\begin{proof}
First, note that by solving the differential equation directly, $k (x,\cdot) \in C^{1} ( [ 0, T])$. By the smoothness of $g_i(\cdot)$, $i=1,2$ and the boundedness of $w(x,t)$ in $Q_T$, $k(\cdot,t) \in L^p (\Omega)$ for any $p > 1$, for all $t \in (0,T)$. Taking the time derivative of $ \tfrac{1}{p}\norm{k (\cdot,t)}_{L^p (\Omega)}^p$ gives
\begin{align}\label{Y1.3}
\frac{1}{p} \frac{d}{dt} \int_\Omega k^p{\rm d}x &= \int_\Omega k^{p-1} g_1(w) {\rm d}x - \int_\Omega k^p g_2(w) {\rm d}x.
\end{align}
We now apply Young's inequality. To this end, we carefully rewrite as
\begin{align}
k^{p-1} g_1(w) &= k^{p-1} g_2(w)^{(p-1)/p} 
 \cdot \frac{g_1(w)}{g_2(w)^{(p-1)/p}} \nonumber \\
&\leq \frac{1}{p_1}\left( k^{p-1} g_2(w)^{(p-1)/p} \right)^{p_1} + \frac{1}{q_1}\left( \frac{g(w)}{g_2(w)^{(p-1)/p}} \right)^{q_1},
\end{align}
where $p_1,q_1 > 1$ satisfy $p_1 ^{-1} + q_1 ^{-1} = 1$. Choosing $p_1 = p/(p-1)$ and $q_1 = p$, equation \eqref{Y1.3} then becomes
\begin{align}\label{1.1.pfchange}
 \frac{d}{dt}\int_\Omega k^p {\rm d}x &\leq \int_\Omega \left( \frac{g_1(w)}{g_2(w)} \right)^p g_2(w) {\rm d}x
\end{align}
Using bound \eqref{g-bound-1} we then have
\begin{align}
 \frac{d}{dt} \int_\Omega k^p{\rm d}x  &\leq M^{p} \norm{g_2 (w(\cdot,t))}_{L^1 (\Omega)} .
\end{align}
Integrating both sides from $0$ to $t$ yields
\begin{align}
\norm{k(\cdot,t)}_{L^p (\Omega)}^p &\leq M^{p} \norm{g_2 (w)}_{L^{1,1} (Q_T)} + \norm{k_0}_{L^p (\Omega)}^p .
\end{align}
Taking $p^{th}$ roots of both sides and sending $p \to \infty$ leaves
\begin{align}
\norm{k(\cdot,t)}_{L^\infty (\Omega)} &\leq M + \norm{k_0}_{L^\infty (\Omega)} .
\end{align}
Taking the supremum over $t \in (0,T)$ yields \eqref{kLinfest}, completing the proof.
\end{proof}
We also highlight the following properties of $k(x,t)$ inherited by the function $w(x,t)$, a simple consequence of solving the ordinary differential equation.
\begin{proposition}
Suppose $k(x,\cdot)$ solves \eqref{kProto} with $w \in C^{1,1} (\overline{Q}_T)$ periodic in $\Omega$. Then, $k(x,t) \in C^{1,2} (\overline{Q}_T)$. Moreover, $k(\cdot,t)$ is periodic in $\Omega$ for all $t >0$.
\end{proposition}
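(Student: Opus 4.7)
My plan is to solve the linear first-order ODE \eqref{kProto} in closed form using the integrating factor method, treating $x \in \Omega$ as a parameter, and then read off each of the claimed properties directly from the resulting representation. For each fixed $x$,
\[
k(x,t) = k_0(x)\,e^{-\int_0^t g_2(w(x,s))\,ds} + \int_0^t g_1(w(x,s))\,\exp\!\Bigl(-\int_s^t g_2(w(x,\tau))\,d\tau\Bigr)\,ds.
\]
Every subsequent step is a matter of reading off properties of the right-hand side.

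Periodicity is then immediate: since $w(\cdot,t)$ is $2L$-periodic by hypothesis and $k_0$ is $2L$-periodic in the ambient setting of Lemma \ref{ODEbounds1}, every ingredient on the right-hand side is a composition or integral of $2L$-periodic functions of $x$, so $k(\cdot,t)$ is periodic for every $t \in [0,T]$. For the $C^{1,2}$ regularity I would treat the two variables separately. In the time direction, the ODE itself gives $k_t = g_1(w) - g_2(w)k$, which is jointly continuous on $\overline{Q}_T$, and differentiating once more in $t$ yields
\[
k_{tt} = g_1'(w)w_t - g_2'(w)w_t\,k - g_2(w)\,k_t,
\]
which is continuous because $w$, $w_t$, $k$, $k_t$ are continuous and $g_1, g_1', g_2, g_2'$ are continuous on the range of $w$. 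In the spatial direction, I would differentiate the representation formula once under the integral sign, which is permissible by dominated convergence together with the fact that $x \mapsto g_i(w(x,s))$ is $C^1$ uniformly in $s$ on the compact rectangle $\overline{Q}_T$; this yields $k_x \in C(\overline{Q}_T)$.

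Nothing in the argument is especially deep; it is largely bookkeeping once the closed-form solution is in hand. The only mild technicality is justifying the differentiation under the integral sign and verifying joint continuity on the closed set $\overline{Q}_T$, both of which reduce to standard uniform-continuity arguments given the $C^{1,1}$ regularity of $w$ and the $C^1$ regularity of $g_1, g_2$. A minor subtlety is the regularity of $k_0$ required so that $k_x$ extends continuously up to $t=0$; this is either absorbed into the hypothesis that $k_0$ is regular enough in the ambient setting, or one interprets $k_x$ weakly at $t = 0$ and classically for $t > 0$.
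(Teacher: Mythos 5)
Your proposal is correct and takes exactly the approach the paper intends: the paper states the proposition without proof, labelling it ``a simple consequence of solving the ordinary differential equation,'' and your explicit integrating-factor representation supplies the missing details. Reading off periodicity from the formula, obtaining $k_t$ and $k_{tt}$ from the ODE itself (using $w_t \in C(\overline{Q}_T)$), and obtaining $k_x$ by differentiating under the integral sign (using $w_x \in C(\overline{Q}_T)$) is the natural way to make the claim precise. Your flagged subtlety about $k_0$ is a genuine one that the paper elides: as stated in Lemma~\ref{ODEbounds1} the initial datum is only $W^{1,2}(\Omega)$, so the term $(k_0)_x(x)\,e^{-\int_0^t g_2(w)\,ds}$ need not be continuous up to $t=0$, and the proposition really does require either $k_0 \in C^1(\overline{\Omega})$ (which holds for the smooth approximating iterates used in the existence proof) or a weak interpretation of $k_x$ at $t=0$, exactly as you observe.
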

Finally, we obtain $L^p$ estiates on the time/space derivatives $k_t$ and $k_x$.
\begin{theorem}\label{ODEbounds2}
Assume the same conditions as in Lemma \ref{ODEbounds1} hold. Assume also that there exists $N>0$ and $q \geq 1$ fixed so that
\begin{equation}\label{g-bound-2}
g_2(z) \leq N ( 1 + z^q ) \quad \text{for all } z \geq 0.
\end{equation}
Assume in addition that $w_x \in L^{2,2} (Q_T)$ and $\sup_{t \in (0,T)} \norm{w(\cdot,t)}_{L^p (\Omega)} < \infty$ for all $p \geq 1$. Then there holds
\begin{align}\label{bnd3.1}
&\sup_{t \in (0,T)} \norm{k_t (\cdot,t)}_{L^{p} (\Omega)} \leq 
4 N \left( M + \sup_{t \in (0,T)} \norm{k (\cdot,t)}_{L^\infty (\Omega)} \right) \left( \as{\Omega}^{1/p} + \sup_{t \in (0,T)} \norm{w(\cdot,t)}_{L^{pq} (\Omega)}^q \right)
\end{align}
for any $p \in (1,\infty)$. Moreover, if for some $\tilde N > 0$, $\tilde q \geq 0$ fixed we have that
\begin{align}\label{g-bound-3}
    \as{g_1 ^\prime (z)}, \as{g_2^\prime (z)} \leq \tilde N ( 1 + z^{\tilde q} ) \quad \text{ for all } z \geq 0,
\end{align}
then there exists a constant $C>0$ depending only on $T$, $\tilde N$, and $\tilde q$ so that
\begin{align}\label{bnd3.2}
     \sup _{t \in (0,T)} \norm{k_x (\cdot,t)}_{L^p (\Omega)} &\leq C \sup_{t \in (0,T)} \norm{(1 + w^{\tilde q})(\cdot,t)}_{L^{2p/(2-p)} (\Omega)} \norm{w_x}_{L^{2,2} (Q_T)}  
     + \norm{(k_0)_x}_{L^p (\Omega)} ,
\end{align}
for any $p \in (1,2)$ whenever $\tilde q > 0$, and any $p \in (1,2]$ whenever $\tilde q = 0$. 
\end{theorem}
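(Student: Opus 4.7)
The plan is to handle bounds \eqref{bnd3.1} and \eqref{bnd3.2} separately, exploiting only the pointwise ODE structure of \eqref{kProto} together with the growth hypotheses; in contrast to the proof of Lemma \ref{ODEbounds1}, no additional $L^p$-energy argument is needed because the uniform $L^\infty$ control on $k$ supplied by that lemma is already at my disposal.

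For \eqref{bnd3.1}, I would substitute directly into the differential equation to obtain the pointwise identity $k_t = g_1(w) - g_2(w) k$. Using the triangle inequality, the structural assumption $g_1 \leq M g_2$ from \eqref{g-bound-1}, and the growth hypothesis \eqref{g-bound-2}, this yields
$$
|k_t(x,t)| \leq g_2(w)\bigl(M + |k|\bigr) \leq N(1+w^q)\bigl(M + \textstyle\sup_{t \in (0,T)} \norm{k(\cdot,t)}_{L^\infty(\Omega)}\bigr).
$$
Taking the $L^p$ norm in space and applying Minkowski's inequality to separate the constant $1$ from $w^q$ yields the Lebesgue-exponent split $\norm{1+w^q}_{L^p(\Omega)} \leq |\Omega|^{1/p} + \norm{w}_{L^{pq}(\Omega)}^q$; combined with a final supremum in $t$, this produces \eqref{bnd3.1} up to a harmless numerical constant absorbed into the prefactor.

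For \eqref{bnd3.2}, the key is to differentiate the ODE in $x$, which gives the linear first-order equation
$$
\partial_t(k_x) + g_2(w) k_x = g_1'(w) w_x - g_2'(w)\, k\, w_x, \qquad k_x(x,0) = (k_0)_x(x).
$$
Since $g_2(w) \geq 0$, the integrating factor $\exp\bigl(-\int_s^t g_2(w(x,r))\,dr\bigr)$ is bounded above by $1$, yielding the pointwise estimate
$$
|k_x(x,t)| \leq |(k_0)_x(x)| + \int_0^t \bigl(|g_1'(w)| + |g_2'(w)|\,|k|\bigr)\,|w_x|\,ds.
$$
Applying \eqref{g-bound-3} together with the $L^\infty$ bound on $k$ controls the integrand pointwise by $\tilde C(1+w^{\tilde q})|w_x|$. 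I would then use the Minkowski integral inequality to push the spatial $L^p$ norm inside the time integral, followed by the H\"older estimate
$$
\bigl\|(1+w^{\tilde q})\,|w_x|\bigr\|_{L^p(\Omega)} \leq \bigl\|1+w^{\tilde q}\bigr\|_{L^{2p/(2-p)}(\Omega)} \norm{w_x(\cdot,s)}_{L^2(\Omega)},
$$
and finally a Cauchy--Schwarz step in time to dominate $\int_0^t \norm{w_x(\cdot,s)}_{L^2(\Omega)}\,ds$ by $T^{1/2}\norm{w_x}_{L^{2,2}(Q_T)}$, recovering \eqref{bnd3.2}.

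The main obstacle is the careful bookkeeping of H\"older exponents in the second bound: because $w_x$ is only known in $L^2$, the residual factor $(1+w^{\tilde q})$ must live in precisely the conjugate space $L^{2p/(2-p)}$, which forces the restriction $p \in (1,2)$ whenever $\tilde q > 0$. When $\tilde q = 0$ the factor $1+w^{\tilde q}$ is a constant, the H\"older step trivialises, and the endpoint $p=2$ becomes admissible. Beyond this exponent tracking the argument is essentially routine: the nonnegativity of $g_2$ ensures the integrating factor contracts rather than amplifies, so no Gr\"onwall-type blow-up enters, and the constant $C$ picks up only the a priori $L^\infty$ bound on $k$ from Lemma \ref{ODEbounds1} alongside the explicit dependences on $T$, $\tilde N$, and $\tilde q$.
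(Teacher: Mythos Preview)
Your proposal is correct and follows essentially the same route as the paper: for \eqref{bnd3.1} both arguments substitute the ODE, invoke $g_1\le Mg_2$ and the growth bound on $g_2$, then take $L^p$ norms; for \eqref{bnd3.2} both exploit that the exponential integrating factor is bounded by $1$ and finish with the same H\"older split $p^{-1}=2^{-1}+(2p/(2-p))^{-1}$ in space and Cauchy--Schwarz in time. The only stylistic difference is that you differentiate the ODE to obtain a linear equation for $k_x$ (picking up a factor $|k|$, controlled by Lemma \ref{ODEbounds1}), whereas the paper differentiates the Duhamel formula for $k$ directly, producing a nested time integral $\int_s^t g_2'(w)w_x\,d\xi$ which it bounds crudely by $T$ times a single integral; your variant is slightly cleaner and avoids that extra factor of $T$, but the two are otherwise interchangeable.
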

\begin{proof}
Integrating $\as{k_t}^p$ over $\Omega$, applying an elementary inequality, and using \eqref{g-bound-1} yields
\begin{align}\label{Y1.179}
\int_\Omega \as{k_t} ^p{\rm d}x &= \int_\Omega \as{ g_1(w) -g_2(w) k}^p{\rm d}x \nonumber \\
&\leq 2^{p-1} \int_\Omega\left(  \as{g_1(w)}^p + \as{g_2(w)}^p \as{k}^p\right) {\rm d}x \nonumber.\\
&\leq 2^{p-1} \int_\Omega \left( M^p + \as{k}^p  \right) \as{g_2(w)}^p {\rm d}x
\end{align}
Estimating further, we use the bound on $k$ along with bound \eqref{g-bound-2} and the same elementary inequality to see that
\begin{align*}
\int_\Omega \as{k_t} ^p{\rm d}x  &\leq 2^{p-1} \left( M^p + \norm{k(\cdot,t)}_{L^\infty (\Omega)}^p \right) \int_\Omega \as{g_2(w)}^p {\rm d}x\\
&\leq 4^{p-1} N^p \left( M^p + \norm{k(\cdot,t)}_{L^\infty (\Omega)}^p \right) \int_\Omega \left( 1 + w^{pq}\right) {\rm d}x \nonumber \\
&\leq  4^{p} N^p \left( M^p + \norm{k(\cdot,t)}_{L^\infty (\Omega)}^p \right)  \left(\as{\Omega} + \norm{w(\cdot,t)}^{pq}_{L^{pq} (\Omega)} \right).
\end{align*}
Thus, we take the $p^{th}$ roots of both sides followed by the supremum over $t \in (0,T)$ to obtain \eqref{bnd3.1}. This completes the first part of the proof.

Next, we obtain $L^p$ bounds on $k_x$ for any $p \in (1,2)$. Solving the ordinary differential equation, we may compute $k_x (x,t)$ as follows:
\begin{align}
    k_x (x,t) &= \frac{\partial}{\partial x} \left( \int_0 ^t e^{- \int_s ^t g_2(w) {\rm d} \xi} g_1(w) {\rm d}s + k_0 (x)  \right) \nonumber \\
    &= \int_0 ^t e^{- \int_s ^t g_2(w) {\rm d} \xi } \left( g_1^\prime (w) w_x - \int_s ^t g_2^\prime (w) w_x d \xi  \right) {\rm d}s + (k_0)_x.
\end{align}
Therefore, estimating crudely and using bound \eqref{g-bound-3} there holds
\begin{align}
    \as{k_x} &\leq \int_0 ^T \left( \as{g_1^\prime (w)} \as{w_x} + \int_0 ^T \as{g^\prime _2 (w)}\as{w_x} {\rm d} \xi \right) {\rm d} s + \as{( k_0 ) _x} \nonumber \\
    &\leq \int_0 ^T \left( \as{g_1 ^\prime (w)} + T \as{g^\prime _2 (w)}  \right) \as{w_x} {\rm d}s + \as{(k_0)_x} \nonumber \\
    &\leq \tilde N (1 + T) \int_0 ^T \left( 1 + w^{\tilde q}\right) \as{w_x} {\rm d} s + \as{(k_0)_x}.
\end{align}
Raising both sides to the power $p$, integrating over $\Omega$ and estimating via an elementary application of H{\" o}lder's inequality in the temporal domain yields
\begin{align}\label{Y.3.5.7}
     \int_\Omega \as{k_x}^p {\rm d}x &\leq \tilde N ^p (1 + T)^p \int_\Omega \left( \int_0 ^T (1 + w^{\tilde q} ) \as{w_x} {\rm d} s \right)^p {\rm d}x + \norm{(k_0)_x}^p_{L^p (\Omega)} \nonumber \\
     &\leq \tilde N ^p (1 + T)^p T^{p-1}\iint_{Q_T} (1 + w^{\tilde q} )^p \as{w_x}^p {\rm d}x {\rm d} s  + \norm{(k_0)_x}^p_{L^p (\Omega)}.
\end{align}
We now apply H{\" o}lder's inequality in the spatial domain as follows:
$$
\int_\Omega (1 + w^{\tilde q})^p \as{w_x}^p {\rm d}x \leq \left( \int_\Omega \as{w_x}^{p p_1} {\rm d}x\right)^{1/p_1}  \left( \int_\Omega (1 + w ^{\tilde q})^{p q_1} {\rm d}x\right)^{1/q_1},
$$
where we again choose $p_1 = 2/p > 1$ so that $q_1 = 2/(2-p) >1$. Simplifying and taking the supremum over $t \in (0,T)$ for the lower order term, we find
$$
\int_\Omega (1 + w^{\tilde q})^p \as{w_x}^p {\rm d}x \leq \norm{w_x (\cdot,t)}_{L^2 (\Omega)}^p \sup_{t \in (0,T)} \norm{(1 + w^{\tilde q})(\cdot,t)}_{L^{2p/(2-p)}(\Omega)}^p 
$$
\eqref{Y.3.5.7} then becomes
\begin{align}\label{Y.3.5.72}
 \int_\Omega \as{k_x}^p {\rm d}x &\leq \tilde N ^p (1 + T)^p T^{p-1}\sup_{t \in (0,T)} \norm{(1 + w^{\tilde q})(\cdot,t)}_{L^{2p/(2-p)} (\Omega)}^p \int_0^T \norm{w_x (\cdot,s)}_{L^2 (\Omega)}^p {\rm d}s 
+ \norm{(k_0)_x}_{L^p (\Omega)}.
\end{align}
Finally, we apply H{\" o}lder's inequality in the temporal domain once more as follows:
$$
\int_0 ^T \norm{w_x (\cdot,s)}_{L^2 (\Omega)}^p {\rm d}s \leq T^{1 - p/2} \norm{w_x}_{L^{2,2} (Q_T)}^p ,
$$
whence \eqref{Y.3.5.72} becomes
\begin{align}
     \int_\Omega \as{k_x}^p {\rm d}x &\leq \tilde N ^p (1 + T)^p T^{p/2}\sup_{t \in (0,T)} \norm{(1 + w^{\tilde q})(\cdot,t)}_{L^{2p/(2-p)} (\Omega)}^p \norm{w_x}_{L^{2,2} (Q_T)}^p  
     + \norm{(k_0)_x}_{L^p (\Omega)}^p .
\end{align}
Taking the $p^{th}$ roots of both sides followed by the supremum over $t \in (0,T)$ yields \eqref{bnd3.2}, valid for any $p \in (1,2)$. Finally, if $\tilde q = 0$, the dependence on $w^{\tilde q}$ vanishes and the bound holds for $p=2$, completing the proof.
\end{proof}

\subsection{Existence \& Uniqueness}

We are now prepared to use these preliminary estimates to prove the existence of a weak solution. Much of the heavy lifting is now complete. What remains is to construct an appropriate sequence of approximate solutions and use our previously obtained estimates to extract a convergent subsequence.

\begin{proof}[Proof of Theorem \ref{thm:exist1a}]
We prove the existence of a global weak solution to the following general system subject to periodic boundary conditions:
\begin{align}
\label{eq:general}
\begin{cases}
u_t = d u_{xx} + \alpha ( u \overline{k}_x )_x + f(u), \quad &\text{ in } Q_T, \cr
k_t = g_1(u) - g_2 (u)k \quad &\text{ in } Q_T,
\end{cases}
\end{align}
where systems \eqref{1a} and \eqref{1b} are obtained by choosing $g_1(u) := g(u)$, $g_2(u) := \mu + \beta u$ or $g_1(u):= \kappa g(u)$, $g_2(u) := \mu + \beta u + g(u)$, respectively.
First we construct a sequence of approximate solutions via the following iteration scheme:
\begin{align}
\label{eq:iter}
\begin{cases}
(u_n)_t = d (u_n)_{xx} + \alpha ( u_n (\overline{k_n})_x )_x + f(u_n), \quad &\text{ in } Q_T, \cr
(k_n)_t = g_1(u_{n-1}) - g_2 (u_{n-1})k_n \quad &\text{ in } Q_T,
\end{cases}
\end{align}
for $n \geq 2$, where we choose $(u_n(x,0), k_n (x,0) ) = ( u_0 (x), k_0(x))$ for each $n$. Note carefully that $u_n = u_n (x,t)$ is a function defined in $\overline{Q}_T$ for all $n \geq 1$, whereas $u_0 = u_0 (x)$ denotes the fixed initial data of the original problem. The same holds for $\{ k_n \}_{n \geq 2}$, each of which are defined over $\overline{Q}_T$; notice also that we do not refer to $k_1 (x,t)$ as we require only $u_1(x,t)$ to initiate.

Through this construction, we generate a sequence of solutions $ \{ ( u_n, k_n ) \}_{n \geq 2}$. More precisely, we choose the initial iterate $0 < u_1 (x,t) \in C^{2+\sigma, 1+\sigma/2} ( \overline{Q}_T )$ for some $\sigma \in (0,1)$. By solving differential equation for $k_2 (x,t)$, the dependence of $k_2 (x,t)$ on the sufficiently regular functions $u_1 (x,t)$ and $g_i(\cdot)$, $i=1,2$, ensures that $k_2 \in C^{2 + \sigma, 1 + \sigma / 2} ( \overline{Q}_T )$ for some (possibly smaller) $\sigma \in (0,1)$ as well. Furthermore, the positivity of $u_1$ and $u_1 (x,0) = u_0(x)$ ensures that $k_2 \gneqq 0$ in $Q_T$. Then, the existence of a nonnegative, nontrivial solution $u_2 (x,t) \in C^{2+\sigma, 1+\sigma/2}(\overline{Q}_T)$ follows from the classical theory of parabolic equations since it is a second order, semi-linear parabolic equation with H\"{o}lder continuous coefficients \cite{ladyzhenskaia1968linear}. Therefore, there exists a nonnegative, nontrivial classical solution pair $(u_2, k_2)$ each belonging to $C^{2+\sigma, 1+ \sigma/2} (\overline{Q}_T )$ for some $\sigma \in (0,1)$. One may then proceed inductively, proving the existence of a nonnegative, nontrivial classical solution pair $(u_n, k_n)$ for any $n \geq 3$ using the regularity of the previous iterate $u_{n-1} (x,t)$. We now seek uniform bounds in a weaker setting.

To this end, fix $n\geq2$. It is easy to obtain $L^1$-bounds on $u_n$ as follows:
\begin{align}
    \frac{{\rm d}}{{\rm d}t} \int_\Omega u_n {\rm d}x = \int_\Omega f(u_n) {\rm d}x \leq f^\prime (0) \int_\Omega u_n {\rm d}x,
\end{align}
where we have integrated by parts, applied the boundary conditions, and used the assumed bound $f(z) \leq f^\prime (0) z$ for all $z \geq0$. Gr{\"o}nwall's inequality implies that
$$
\norm{u_n (\cdot, t)}_{L^1 (\Omega)} \leq e^{f^\prime (0) T} \norm{u_0 }_{L^1 (\Omega)}. 
$$
Thus, integrating over $(0,T)$ yields
\begin{align}\label{L1boundu}
    \norm{u_n}_{L^{1,1} (Q_T)} \leq T e^{f^\prime (0) T} \norm{u_0}_{L^1 (\Omega)},
\end{align}
and so $\{ u_n \}_{n \geq 1}$ is uniformly bounded in $L^{1,1} (Q_T)$ for any $T>0$ fixed. 

Return now to the equation for $k_n$. The smoothness of the iterates $u_n$ allows us to apply Lemma \ref{ODEbounds1}, giving us
\begin{align}\label{Y1.2.0}
    \sup_{t \in (0,T)} \norm{k_n (\cdot,t)}_{L^\infty (\Omega)} &\leq M + \norm{k_0}_{L^\infty (\Omega)}.
\end{align}
Note that $k_0 \in W^{1,2} (\Omega) \Rightarrow k_0 \in L^\infty (\Omega)$ by the Sobolev embedding. Then, Lemma \ref{kernelbound} paired with estimate \eqref{Y1.2.0} implies that
\begin{align}\label{Y1.2.33}
    \sup_{t \in (0,T)} \norm{(\overline{k_n})_x (\cdot, t)}_{L^\infty(\Omega)} &\leq R^{-1} \sup_{t \in (0,T)} \norm{k_n (\cdot,t)}_{L^\infty (\Omega)} \nonumber \\
    &\leq R^{-1} \left( M + \norm{k_0}_{L^\infty (\Omega)} \right) =: C_1
\end{align}
Now we seek $L^p$-bounds on the iterates $u_n$. Fix $p \geq 2$. Taking the time derivative of $\tfrac{1}{p} \norm{u_n(\cdot,t)}_{L^{p} (\Omega)} ^{p}$, integrating by parts yields and using the bound for $f(\cdot)$ yields
\begin{align}\label{Y1.5}
\frac{1}{p}\frac{d}{dt} \int_\Omega u_n^{p} {\rm d}x&= \int_\Omega u_n^{p-1} ( d (u_n)_x + \alpha u_n (\overline{k_n})_x)_x{\rm d}x + \int_\Omega u_n^{p-1} f(u_n){\rm d}x \nonumber \\ 
&\leq f^\prime(0) \int_\Omega u_n ^p {\rm d}x - d (p-1) \int_\Omega u_n^{p-2} \magg{(u_n)_x}{\rm d}x \nonumber \\
&\ + \as{\alpha}  (p-1) \int_\Omega u_n^{p-1} \as{(u_n)_x} \as{(\overline{k_n})_x} {\rm d}x.
\end{align}
We now use \eqref{Y1.2.33} and Cauchy's inequality with epsilon to control the third term by the second term on the right hand side of \eqref{Y1.5}. To this end, we estimate
\begin{align}
    \as{\alpha} u_n ^{p-1} \as{(u_n)_x} \as{(\overline{k}_n)_x} &\leq \as{\alpha} C_1 u_n ^{(p-2)/2} \as{(u_n)_x} \as{u_n}^{p/2} \nonumber \\
    &\leq \as{\alpha} C_1 \left( \frac{\varepsilon}{2} u_n ^p +  \frac{1}{2 \varepsilon} u_n ^{p-2} \as{(u_n)_x}^2 \right),
\end{align}
where we choose $\varepsilon = d^{-1} \as{\alpha} C_1$. Paired with \eqref{Y1.5}, this leaves
\begin{align}\label{Y1.52}
    \frac{1}{p}\frac{d}{dt} \int_\Omega u_n^{p} {\rm d}x&\leq -\frac{d}{2} \int_\Omega u_n ^{p-2} \as{(u_n)_x}^2 {\rm d}x 
   +  ( f^\prime (0) + C_1^2 \alpha^2 d^{-1} (p-1) ) \int_\Omega u_n ^p {\rm d}x
\end{align}
Therefore, dropping the negative term and applying Gr{\"o}nwall's inequality yields
$$
    \norm{u_n (\cdot,t)}^p_{L^p (\Omega)} \leq e^{p(f^\prime (0) + C_1^2 \alpha^2 d^{-1} (p-1))T} \norm{u_0}^p _{L^p (\Omega)}.
$$
Taking $p^{\text{th}}$ roots followed by the supremum over $t \in (0,T)$ yields the estimate
\begin{align}\label{Lpuest}
 \sup_{t \in (0,T)} \norm{u_n (\cdot,t)}_{L^p ( \Omega)} &\leq e^{(f^\prime (0) + C_1 ^2\alpha^2 d^{-1} (p-1))T} \norm{u_0}_{L^p (\Omega)}  \nonumber \\
&=: C_2  ,
\end{align}
noting that the exponent depends critically on $p$. Next, we return to \eqref{Y1.52} for the case $p=2$. Upon rearrangement, we apply estimate \eqref{Lpuest} to obtain
\begin{align}
\frac{1}{2}\frac{d}{dt} \int_\Omega u_n^{2}{\rm d}x + \frac{d}{2} \int_\Omega \magg{(u_n)_x}{\rm d}x &\leq \left(f^\prime (0) + C_1 ^2 \alpha^2 d^{-1} \right) \int_\Omega u_n^2 {\rm d}x \nonumber \\
&\leq C_2^2  \left(f^\prime (0) + C_1 ^2 \alpha^2 d^{-1} \right) \nonumber \\
&=: C_3.
\end{align}
Integrating both sides from $0$ to $T$ yields
\begin{align}
    \frac{1}{2} \left( \norm{u_n (\cdot,T)}_{L^2 (\Omega)}^2 - \norm{u_0}_{L^2 (\Omega)}^2 + d \norm{(u_n)_x}_{L^{2,2} (Q_T)}^2  \right) \leq C_3 T.
\end{align}
Ignoring the positive term on the left hand side, we extract the desired estimate for $(u_n)_x$:
\begin{align}\label{uxest}
    \norm{(u_n)_x}_{L^{2,2} (Q_T)}^2 &\leq d^{-1} \left( 2 C_3 T +  \norm{u_0}_{L^2 (\Omega)} \right)=: C_4^2.
\end{align}

We now immediately have the boundedness of $(k_n)_x$ and $(k_n)_t$ for any $n \geq 2$ in some $L^p$ spaces. Indeed, by Theorem \ref{ODEbounds2},  estimates  \eqref{Y1.2.0}, \eqref{Lpuest} and \eqref{uxest} imply the existence of a constant $C_5$, independent of $n$, such that
\begin{align}\label{knbounds1}
    \sup_{t \in (0,T)} \norm{(k_n)_t (\cdot,t)}_{L^p (\Omega)},\ \sup_{t \in (0,T)} \norm{(k_n)_x (\cdot,t)}_{L^p (\Omega)} \leq C_5,
\end{align}
for any $p \in (1,2)$.

We now appeal to standard $L^p$-estimates for parabolic equations and the Sobolev embedding to improve our estimates on $u_n$. If we expand the equation for $u_n$ it reads
$$
(u_n)_t - d (u_n)_{xx} = \alpha \left( \overline{(k_n)}_x (u_n)_x + u_n \overline{(k_n)}_{xx} \right) + f(u_n).
$$
Obviously, $f(z) \leq f^\prime (0) z$ and $u_n \in L^{p,p}(Q_T)$ for any $p \geq 1$ gives us that $f(u_n) \in L^{p,p}(Q_T)$ as well. Then, $L^p$-estimates for strong solutions (see, e.g., \cite{Wang2003}) ensures that there holds
\begin{align}
\norm{u_n}_{W^{2,1}_r (Q_T)} &\leq C \left( \norm{ \overline{(k_n)}_x (u_n)_x}_{L^r (Q_T)} + \norm{ u_n \overline{(k_n)}_{xx}}_{L^r (Q_T)} + \norm{u_n}_{L^r (Q_T)} \right),
\end{align}
for some $C>0$, for any $r>1$. Choosing $r \in (1,p)$, H\"{o}lder's inequality gives
\begin{align}
\norm{u_n \overline{(k_n)}_{xx}}_{L^r (Q_T)} \leq \norm{u_n}_{L^{pr/(p-r)} (Q_T)} \norm{\overline{(k_n)}_{xx}}_{L^p (Q_T)}.
\end{align}
By Lemma \eqref{kernelbound}, the bound \eqref{knbounds1} and \eqref{Lpuest}, we may further estimate as
\begin{align}
\norm{u_n}_{L^{pr/(p-r)} (Q_T)} \norm{\overline{(k_n)}_{xx}}_{L^p (Q_T)} &\leq R^{-1} C_2 \sup_{t \in (0,T)} \norm{(k_n)_x (\cdot,t)}_{L^p (\Omega)} \nonumber \\
&\leq R^{-1} C_2 C_5 =: C_6,
\end{align}
for any $r \in (1,p)$, where $C_6$ does not depend on $n$. Similarly, there holds
$$
\norm{\overline{(k_n)}_x (u_n)_x}_{L^r (Q_T)} \leq C_7,
$$
where $C_7$ does not depend on $n$. Hence,
$$
\norm{u_n}_{W^{2,1}_r (Q_T)} \leq C ( C_2 + C_5 + C_6),
$$
and so $\{ u_n \}_{n \geq 2}$ is bounded in $W^{2,1}_r (Q_T)$ for any $r \in (1,p)$. Since $p$ can be chosen as close to $2$ as we like, we choose $r \in (\tfrac{3}{2},2)$ and apply the Sobolev embedding to conclude that in fact
\begin{align}\label{uuniformbound}
\norm{u_n}_{C^{\sigma, \sigma/2} (\overline{Q}_T)} \leq \tilde C \norm{u_n}_{W^{2,1}_r (Q_T)} \leq \tilde C C (C_2 + C_5 + C_6 ),
\end{align}
for any $\sigma \in (0,\tfrac{1}{2})$, for some $\tilde C > 0$. In particular, $u_n$ is uniformly bounded in $\overline{Q}_T$, independent of $n$.

Now we are ready to obtain bounds on the time derivative $(u_n)_t$. While the previous step gives $L^p$-bounds on the time derivative for $p \in (1,2)$ only, with a bit of extra work we can show that it also holds for $p=2$. These estimates follow from standard arguments used in the development of the $L^2$-theory of parabolic equations (see, e.g., \cite[Ch. 3.3]{wu2006elliptic}), using all previous bounds. We show the key details only. 

First, note that bound \eqref{bnd3.2} in Theorem \ref{ODEbounds2} paired with the uniform boundedness of the iterates $\{ u_n \}_{n \geq 2}$ over $\overline{Q}_T$ obtained in \eqref{uuniformbound} implies that in fact $\{(k_n)_x\}_{n\geq2}$ is uniformly bounded in $L^{2,2} (Q_T)$. Multiplying the equation for $u_n$ by $(u_n)_t$ and integrating over $\Omega$ gives
\begin{align}
\int_\Omega \as{(u_n)_t}^2{\rm d}x &= \int_\Omega (u_n)_t \left( ( d (u_n)_x + \alpha u_n \overline{(k_n)}_x )_x + f(u_n) \right){\rm d}x
\end{align}
By the regularity of the iterates $(u_n,k_n)$ for fixed $n$, we may exchange the order of differentiation and integrate by parts to obtain
\begin{align}
\int_\Omega \as{(u_n)_t}^2{\rm d}x &= -\frac{d}{2} \int_\Omega \left( \as{(u_n)_x}^2 \right)_t {\rm d}x + \int_\Omega (u_n)_t f(u_n){\rm d}x\nonumber \\
&+ \alpha \int_\Omega (u_n)_t \left( (u_n)_x \overline{(k_n)}_x + u_n \overline{(k_n)}_{xx} \right){\rm d}x . 
\end{align}
Integrating from $0$ to $T$ and dropping the negative term, we are left with
\begin{align}
\iint_{Q_T} \as{(u_n)_t}^2 {\rm d}x{\rm d}t&\leq \frac{d}{2} \norm{(u_{0})_x}_{L^2 (\Omega)} ^2 + \iint_{Q_T} (u_n)_t f(u_n){\rm d}x\nonumber \\
&+ \alpha \iint_{Q_T} (u_n)_t \left( (u_n)_x \overline{(k_n)_x} + u_n \overline{(k_n)_{xx}} \right){\rm d}x . 
\end{align}
We then estimate crudely as follows: since $u_n$ and $\overline{(k_n)}_x$ are uniformly bounded in $\overline{Q}_T$ by \eqref{uuniformbound} and \eqref{Y1.2.33}, and since $(u_n)_x$ and $\overline{(k_n)}_{xx}$ are uniformly bounded in $L^{2,2}(Q_T)$ by \eqref{uxest} and preceding arguments, $f(u_n)$, $(u_n)_x \overline{(k_n)}_x$ and $u_n \overline{(k_n)}_{xx}$ are all uniformly bounded in $L^{2,2}(Q_T)$. Hence, a simple application of Cauchy's inequality with epsilon yields the existence of a constant $\tilde C^\prime >0$, independent of $n$, such that
\begin{align}
\frac{1}{2} \int_{Q_T} \as{(u_n)_t}^2{\rm d}x{\rm d}t &\leq  \frac{d}{2} \norm{(u_{0})_x}_{L^2 (\Omega)} ^2 + \tilde C ^\prime .
\end{align}
We now summarize the uniform estimates we have obtained and complete the limiting process.
\begin{align}
u_n &\in L^{\infty} (0,T; L^p (\Omega)) \cap C^{\sigma, \sigma/2} (\overline{Q}_T),\nonumber \\
(u_n)_x &\in L^{2} (0,T; L^{2} (\Omega)) , \quad (u_n)_t \in L^{2}( 0,T; L^{2} (\Omega)), \nonumber \\
k_n &\in L^{\infty} (0,T; L^{\infty} (\Omega)) \cap C^{\sigma, \sigma/2} ( \overline{Q}_T), \nonumber \\
(k_n)_x &\in L^{2} (0,T; L^{2} (\Omega)), \quad (k_n)_t \in L^{2} (0,T; L^{2} (\Omega)).
\end{align}
Hence, there exists a limit function $(u_\infty, k_\infty)$ so that for any $1 \leq p \leq \infty$ and any $0 < \sigma^\prime < \sigma < 1/2$, there holds
\begin{align}
u_n \to u_\infty,\ k_n \to k_\infty\quad \text{ strongly in }& L^{p,p} (Q_T) \cap C^{\sigma^\prime,\sigma^\prime/2} (\overline{Q}_T), \nonumber \\
(u_n)_x \to (u_\infty)_x\quad \text{ strongly in }& L^{2,2} (Q_T), \nonumber \\
(u_n)_t \to (u_\infty)_t\quad \text{ weakly in }& L^{2,2} (Q_T), \nonumber \\
(k_n)_t \to (k_\infty)_t,\ (k_n)_x \to (k_\infty)_x\quad \text{ weakly in }& L^{2,2} (Q_T).
\end{align}
It is not difficult to verify that $(u_\infty, k_\infty)$ is indeed a weak solution to the original problem \eqref{eq:general} in the sense of \eqref{weaksolnu}-\eqref{weaksolnk} and satisfies the initial data in the classical sense. Since  $u_n, k_n$ are nonnegative for all $n\geq 2$, we find that $0 \leq u_\infty, k_\infty$ in $\overline{Q}_T$. Furthermore, the solution $u_\infty$ is nontrivial since $f^\prime (0) > 0$, whence $k_\infty$ is also nontrivial. We now write $(u,k)$ for the solution obtained.

Uniqueness given initial data $(u_0,k_0)$ follows from standard arguments and using the fact that $u$ and $\overline{k}_x$ are uniformly bounded over $Q_T$. Indeed, if there were two solution pairs $(u,k)$ and $(\tilde u, \tilde k)$ satisfying the same initial data, an application of Cauchy's inequality with epsilon paired with the uniform boundedness of the solutions over $Q_T$, the smoothness of the functions $g_i(\cdot)$, $i=1,2$, $f(\cdot)$ (Lipschitz continuity is sufficient), and the linearity of the spatial convolution operation yields
$$
\frac{1}{2} \frac{{\rm d}}{{\rm d} t} \int_\Omega \left( (u - \tilde u)^2 + (k - \tilde k)^2 \right) {\rm d} x \leq C \int_\Omega \left( (u - \tilde u)^2 + (k - \tilde k)^2 \right) {\rm d} x,
$$
and so Gr{\"o}nwall's inequality implies that $\norm{(u-\tilde u) (\cdot,t)}_{L^2 (\Omega)} = \norm{(k-\tilde k) (\cdot,t)}_{L^2 (\Omega)} = 0$ for any $t \in (0,T)$, and uniqueness is proved.

Hence, for problem \eqref{1a}, there exists a unique, global weak solution in the sense of \eqref{weaksolnu}-\eqref{weaksolnk} so long as $g(u)$ satisfies the bound
$$
g(z) \leq M (\mu + \beta z) \quad \forall z \geq 0,
$$
for $\mu,\ \beta \geq 0$ fixed, for some $M>0$. For problem \eqref{1b}, $g(z) \leq \mu + \beta z + g(z)$ holds trivially, and no further condition on $g$ is required, concluding the proof.
\end{proof}

\section{Stability of spatially-constant steady states}\label{sec:stabilityanalysis}

\subsection{Spatially-constant steady states}

Under assumptions (H0)-(H2), system \eqref{1a} has two constant steady-states $(0,0)$ and $(1,\dfrac{\rho}{\mu+\beta}).$ 
For simplicity, denote 
\begin{align*}
h(u,k):= g(u) - (\mu + \beta u)k,\ U_0:=(0,0),\ U_*:{=}(1,\frac{\rho}{\mu+\beta}).
\end{align*}
The  ODE kinetic system corresponding to \eqref{1a} is given by
\begin{equation}\label{1ODE}
\begin{cases}
u' = f(u), & \ t>0, \\
k' = g(u) - ( \mu + \beta u) k, & \ t> 0.
\end{cases}
\end{equation}
Then the Jacobian matrices $J_0$ at $U_0$ and $J_*$ at $U_*$ of \eqref{1ODE} are given by
\begin{align*}
J_0=\left(\begin{array}{cc}
f_{u0}&0 \\ 
0& -\mu
\end{array} \right),
\ J_*=\left(\begin{array}{cc}
f_{u*}&0 \\ 
h_{u*}& h_{k*}
\end{array} \right),
\end{align*}
where 
\begin{equation}\label{3}
\begin{split}
f_{u0}&:{=}f'(0)>0,\;\; h_{u*}:{=}h_u(U_*)=\frac{g'(1)(\mu+\beta)-\beta\rho}{\mu+\beta},\\
f_{u*}&:{=}f'(1)<0,\;\;  h_{k*}:{=}h_k(U_*)=-(\mu+\beta)<0.
\end{split}
\end{equation}

Let $Tr(J_*)$ denotes the trace of $J_*$, and let $Det(J_*)$ be the determinant of $J_*$. Then we have 
\begin{equation}\label{12}
Tr(J_*)=f_{u*}+h_{k*}<0,\ \ Det(J_*)=f_{u*}h_{k*}>0.
\end{equation}
Hence $U_*$ is a locally asymptotically stable steady state with respect to \eqref{1ODE}, and $U_0$ is linearly unstable with \eqref{1ODE} and also \eqref{1a}. 

\subsection{Linear stability analysis}
\label{sec:lsa}

In Section \ref{sec:spectral} we will use spectral analysis to determine rigorously the regions of stability for the constant steady state.  However, the formalism required for spectral analysis can obscure the central message.  Therefore it is valuable first to perform linear stability analysis using an ansatz. This gives a quick route to an answer that relies on said ansatz, which we then make rigorous via a more detailed spectral analysis.  

The ansatz we use is to assume that non-constant perturbations of the constant steady state have the following form at arbitrarily small times
\begin{align}
\label{eq:lsa_ansatz}
\tilde{u}=u_0 {\rm e}^{{\rm i}q_n x+\lambda t}, \quad \tilde{k}=k_0 {\rm e}^{{\rm i}q_n x+\lambda t}, \quad u \approx \tilde{u}+1, \quad k \approx \tilde{k} +\frac{\rho}{\mu+\beta},
\end{align}
where $u_0, k_0,\lambda \in {\mathbb R}$  are constants and $q_n=\sqrt{l_n}=n\pi/L$ for $n \in {\mathbb N}$.  These particular wavenumbers, $q_n$, are chosen as they satisfy the periodic boundary conditions. Then, neglecting nonlinear terms and applying Fourier theory, the PDEs in system \eqref{1a} become
\begin{align}
   \lambda \left(\begin{array}{c} \tilde{u} \\ \tilde{k}\end{array}\right) &= M_n\left(\begin{array}{c}\tilde{u} \\ \tilde{k}\end{array}\right),  
\end{align}
where 
\begin{align}
M_n&= \left(\begin{array}{cc} -l_n d+f'(1) & -l_n \alpha C_n(G) \\ 
   g'(1)-\frac{\beta\rho}{\mu+\beta} & -\mu-\beta\end{array}\right),
\end{align}
and $C_n(G)$ (the Fourier coefficient of $G$) is defined in \eqref{7}.

Stability requires that the trace of $M_n$ is negative and the determinant positive.  For the determinant to be positive, we require 
\begin{align}
\label{eq:lsa1}
\alpha C_n(G) [g'(1)(\mu+\beta)-\beta\rho]+(\mu+\beta)^2\left(d-\frac{f'(1)}{l_n}\right)>0.
\end{align}
\eqref{eq:lsa1} holds when $\alpha C_n(G)$ is small or zero as $f'(1)<0$ by (H0), and \eqref{eq:lsa1} is true for all $n$ if $\alpha$ (positive or negative) is sufficiently close to $0$.
For the trace to be negative, we require 
\begin{align}
\label{eq:lsa2}
f'(1)< l_n d+\mu+\beta,
\end{align}
which is always true as the right-hand side is positive and $f'(1)<0$ by (H0). As long as Equations \eqref{eq:lsa1}-\eqref{eq:lsa2} are satisfied, system \eqref{1a} will be stable to perturbations of the exponential functional form given in Equation (\ref{eq:lsa_ansatz}) at wavenumber $q_n$. A similar process gives the analogous result for system \eqref{1b}, which we leave as an exercise for the reader.  In the next section, we generalise this result to arbitrary perturbations, for both systems \eqref{1a} and \eqref{1b} (see Theorems \ref{thm:2.8} and \ref{thm:2.9}).

\subsection{Spectral analysis}
\label{sec:spectral}

We now provide a detailed spectral analysis to confirm that the insights in Section \ref{sec:lsa} hold. Equation (3.6) is the eigenvalue problem to be examined here but requires further justification, which is done in Lemma 3.3 and 3.4. This ensures that the Fourier analysis utlized is robust. Of note is the symmetry of the kernel $G(\cdot)$ about the origin, which guarantees that the coefficients $C_n(G)$ are real-valued. If $G$ is not even, then $C_n(G)$ could be complex-valued, and may lead to a Hopf bifurcation. We do not explore this any further in the present work. Finally Theorem 3.6 shows other than the eigenvalues from (3.6), there is an essential spectral point. In this case, that essential spectral point is entirely negative and so will not affect stability. In general reaction-diffusion systems, the essential spectrum will not occur, but for such coupled PDE-ODE system, it may, and so we rule out the possibility. 

The linearized equation of system \eqref{1a}
at a constant steady state $U_*=(u_*,k_*)$ is given by
\begin{equation}\label{2.5}
	\begin{cases}
	\widetilde{u}_t=d\widetilde{u}_{xx}+f_{u*}\widetilde{u}+\alpha u_*(G\ast\widetilde{k})_{xx}, &x\in(-L,L),\ t>0,\\
	\widetilde{k}_t=h_{u*}\widetilde{u}+h_{k*}\widetilde{k}, &x\in(-L,L),\ t>0,\\
		\widetilde{u}(-L,t)=	\widetilde{u}(L,t)=0, & t>0,\\
		\widetilde{u}_x(-L,t)=	\widetilde{u}_x(L,t)=0, & t>0.
	\end{cases}
\end{equation}
Define the linearized operator $ \mathcal{L_*}(\alpha): X \rightarrow Y $ in \eqref{2.5} by 
\begin{equation}\label{20}
\mathcal{L_*}(\alpha)\left[\begin{array}{c}
\phi\\\psi 
\end{array} \right]=\left(\begin{array}{cc}
d \phi_{xx} +f_{u*}\phi + \alpha u_*(G\ast\psi)_{xx} \\ 
h_{u*}\phi + h_{k*}\psi
\end{array} \right).
\end{equation}
For further spectral analysis, we first recall the following definitions and give some lemmas.
\begin{definition}\cite[Definition 2.2.1]{MagalRuan2018}
Let $ A: \mathcal{D}(L)\subset X\rightarrow X $ be a linear operator on a $\mathbb{K}$-Banach space $X$ with $\mathbb{K}=\mathbb{R}$ or $\mathbb{C}$. The resolvent set $\rho(A)$ of $A$ is the set of all points $\lambda\in\mathbb{K}$ such that $(\lambda I-A)^{-1}$ is a bijection from $\mathcal{D}(A)$ into $X$ and the inverse $(\lambda I-A)^{-1}$, called the
resolvent of $A$, is a bounded linear operator from $X$ into itself.
\end{definition}

\begin{definition}\cite[Definition 4.2.1]{MagalRuan2018}
Let $ A: \mathcal{D}(L)\subset X\rightarrow X $ be a linear operator on a complex Banach space $X$. The spectrum of the operator $A$ is defined as the complement of the resolvent
set $\sigma(A)=\mathbb{C}\setminus \rho(A)$.
Consider the following three conditions:
\begin{itemize}[noitemsep,topsep=0pt]
	\item[(1)] $(\lambda I-A)^{-1}$ exists;
	\item[(2)] $(\lambda I-A)^{-1}$ is bounded;
	\item[(3)] the domain of $(\lambda I-A)^{-1}$ is dense in $X$.
\end{itemize}
The spectrum $\sigma(A)$ can be further decomposed into three disjoint subsets.
\begin{itemize}[noitemsep,topsep=0pt]
	\item[(a)] The point spectrum is the set
	\begin{equation*}
	\sigma_p(A): =\{\lambda\in\sigma(A): \mathcal{N}(\lambda I-A)\neq\{0\}\}.
	\end{equation*}
	Elements of the point spectrum $\sigma_p(A)$ are called eigenvalues. If $\lambda\in\sigma_p(A)$, elements $x\in\mathcal{N}(\lambda I-A)$ are called eigenvectors or eigenfunctions. The dimension of $\mathcal{N}(\lambda I-A)$ is the multiplicity of $\lambda$.
	\item[(b)] The continuous spectrum is the set
	\begin{equation*}
	\sigma_c(A): =\{\lambda\in\sigma(A): (1)\  \text{and}\ (3)\ \text{hold but}\ (2)\ \text{does not}\}.
	\end{equation*}
	\item[(c)] The residual spectrum is the set
	\begin{equation*}
	\sigma_r(A): =\{\lambda\in\sigma(A): (\lambda I-A)^{-1}\ \text{exists but}\ \overline{\mathcal{R}(\lambda I-A)}\neq X\}.
	\end{equation*}
\end{itemize}
Furthermore, we have the following spectrum decomposition:
\begin{equation*}
\sigma(A)=\sigma_p(A)\cup\sigma_c(A)\cup\sigma_r(A).
\end{equation*}
\end{definition}

\begin{lemma}\label{lem:2.3}
Assume $G$ satisfies (H0), and  $\phi\in H^2_{per}(-L,L)$. Then
\begin{equation}\label{8}
(G\ast\phi)_{xx}=G\ast(\phi_{xx}).
\end{equation}
\end{lemma}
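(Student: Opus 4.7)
The natural approach is to transfer the two spatial derivatives from $G$ (which is only $L^1_{per}$ under (H0) and in general not even differentiable) onto $\phi$, which has the required regularity by hypothesis. Two routes are available: a Fourier series argument, which aligns cleanly with the spectral framework developed in the rest of this section, or an approximation/mollification argument. I would proceed primarily via Fourier series.

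First, I would expand $\phi = \sum_{n \in \mathbb{Z}} c_n \phi_n$ in the orthonormal basis \eqref{6}. Membership of $\phi$ in $H^2_{per}(-L,L)$ is equivalent to $\sum_{n} l_n^2 |c_n|^2 < \infty$, which yields $\phi_{xx} = -\sum_n l_n c_n \phi_n$ with convergence in $L^2_{per}$. Next I would record the convolution identity in this basis, adapted to the normalization of \eqref{1.3}: by the change of variable $z = x-y$ (which preserves the integration domain by $2L$-periodicity of both $G$ and $\phi$) followed by Fubini's theorem, a direct computation gives
$$
\langle G \ast \phi, \phi_n \rangle = C_n(G)\, c_n,
$$
so that $G \ast \phi = \sum_n C_n(G)\, c_n\, \phi_n$ in $L^2_{per}$. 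Because $|C_n(G)| \leq (2L)^{-1}\lVert G\rVert_{L^1_{per}}$ is bounded uniformly in $n$, the sequence $\{ l_n^2 |C_n(G) c_n|^2 \}$ is summable; thus $G \ast \phi \in H^2_{per}$ and its second derivative may be computed termwise as $(G \ast \phi)_{xx} = -\sum_n l_n\, C_n(G)\, c_n\, \phi_n$. Applying the same Fourier representation with $\phi_{xx}$ in place of $\phi$ produces exactly the same series for $G \ast \phi_{xx}$, and \eqref{8} follows.

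The only subtle point, which I view as the main obstacle, is justifying the termwise manipulations above in a fully rigorous way; this is routine here because $\phi \in H^2_{per}$ supplies enough decay of $|c_n|$ while the uniform boundedness of $\{ C_n(G) \}$ inherited from $G \in L^1_{per}$ ensures closure under convolution. If one prefers to avoid Fourier series altogether, an equivalent route is to mollify $G$ by a smooth $2L$-periodic kernel $\eta_\varepsilon$, set $G_\varepsilon := \eta_\varepsilon \ast G \in C^\infty_{per}$, establish \eqref{8} classically for $G_\varepsilon$ by differentiation under the integral sign (after the change of variable $z = x-y$), and pass to the limit $\varepsilon \to 0$: Young's convolution inequality $\lVert G_\varepsilon \ast v - G \ast v\rVert_{L^2} \leq \lVert G_\varepsilon - G\rVert_{L^1}\, \lVert v\rVert_{L^2}$ applied with $v=\phi$ and $v=\phi_{xx}$ gives convergence of both sides in $L^2_{per}$, hence in the sense of distributions, producing the identity.
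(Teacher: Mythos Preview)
Your proof is correct and takes a genuinely different route from the paper. The paper proceeds by a direct integration-by-parts computation: it writes
\[
(G\ast\phi)_{xx}=\frac{1}{2L}\int_{-L}^{L}G_{xx}(x-y)\phi(y)\,dy=\frac{1}{2L}\int_{-L}^{L}G_{yy}(x-y)\phi(y)\,dy,
\]
integrates by parts twice in $y$, and observes that all boundary terms cancel by $2L$-periodicity of $G$, $G'$, and $\phi$, $\phi'$, leaving $G\ast(\phi_{xx})$. This is clean and short, but as written it presupposes that $G_{xx}$, $G_y$ exist as functions, which (H0) alone does not guarantee (the top-hat kernel being the motivating counterexample). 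Your Fourier-series argument sidesteps this entirely: the identity $\langle G\ast\phi,\phi_n\rangle=C_n(G)\,c_n$ needs only $G\in L^1_{per}$ and Fubini, and the uniform bound $|C_n(G)|\leq(2L)^{-1}\lVert G\rVert_{L^1}$ together with $\phi\in H^2_{per}$ makes the termwise differentiation rigorous. In effect you are proving Lemma~\ref{lem:2.3} and Lemma~\ref{lem:2.4} simultaneously, whereas the paper treats them separately. Your mollification alternative is precisely the step that would be needed to make the paper's calculation fully rigorous under (H0): prove the identity for smooth $G_\varepsilon$ by the paper's integration by parts, then pass to the limit via Young's inequality. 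Either of your routes is sound; the Fourier argument has the added virtue of fitting seamlessly with the spectral analysis that follows.
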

\begin{proof}
By using the symmetry of $G$, the periodicity of $G$ and $\phi$, we get
\begin{align*}
	(G\ast\phi)_{xx}=&\dfrac{1}{2L}\int_{-L}^{L}G_{xx}(x-y)\phi(y)dy=\dfrac{1}{2L}\int_{-L}^{L}G_{yy}(x-y)\phi(y)dy\\=&\dfrac{1}{2L}(G_y(x+L)\phi(-L)-G_y(x-L)\phi(L)-\int_{-L}^{L}G_{y}(x-y)\phi'(y)dy)\\=&\dfrac{1}{2L}(G_y(x+L)\phi(-L)-G_y(x-L)\phi(L)-G(x-L)\phi'(L)
 +G(x+L)\phi'(-L)+\int_{-L}^{L}G(x-y)\phi''(y)dy)\\=&\dfrac{1}{2L}(G'(x+L)\phi(-L)-G'(x-L)\phi(L)-G(x-L)\phi'(L)
 +G(x+L)\phi'(-L))+G\ast(\phi_{xx})\\=&\ G\ast(\phi_{xx}).
	\end{align*}
\end{proof}
\begin{lemma}\label{lem:2.4}
	Assume $G$ satisfies (H0). Then 
	\begin{equation}\label{9}
	G\ast\phi_n=C_n(G)\phi_n,\ \ 
 G\ast\phi_{-n}=C_n(G)\phi_{-n}
	\end{equation}
where $C_n(G)$ is defined in \eqref{7} and $\phi_n, \phi_{-n}$ are defined in \eqref{6}.
\end{lemma}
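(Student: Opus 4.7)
The plan is to verify each identity by a direct change of variables in the convolution integral, exploiting the two defining properties of $\phi_{\pm n}$: they are exponentials (so the kernel decouples into a product) and they are $2L$-periodic (so integration windows shift freely).

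First I would write out
\[
(G\ast\phi_n)(x)=\frac{1}{2L}\int_{-L}^{L}G(x-y)\,e^{in\pi y/L}\,dy,
\]
and substitute $z=x-y$. This gives
\[
(G\ast\phi_n)(x)=\frac{1}{2L}\,e^{in\pi x/L}\int_{x-L}^{x+L}G(z)\,e^{-in\pi z/L}\,dz.
\]
The main observation is that the integrand $G(z)e^{-in\pi z/L}$ is $2L$-periodic in $z$: both $G$ (by (H0), extended $2L$-periodically consistent with the function spaces introduced) and the exponential $e^{-in\pi z/L}$ have period $2L$. Hence integration over any interval of length $2L$ yields the same value, and in particular
\[
\int_{x-L}^{x+L}G(z)\,e^{-in\pi z/L}\,dz=\int_{-L}^{L}G(z)\,e^{-in\pi z/L}\,dz=2L\,C_n(G).
\]
This gives $(G\ast\phi_n)(x)=C_n(G)\,e^{in\pi x/L}=C_n(G)\,\phi_n(x)$.

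For the second identity, I would repeat the same computation with $\phi_{-n}$ in place of $\phi_n$, which produces a factor $C_{-n}(G)$. To conclude $C_{-n}(G)=C_n(G)$, I would split the exponential into real and imaginary parts and use the symmetry $G(-z)=G(z)$ from (H0): the cosine contribution is even in $z$ and identical for $\pm n$, while the sine contribution is odd in $z$ and integrates to zero over the symmetric interval $(-L,L)$. This simultaneously justifies the alternative real-valued formula for $C_n(G)$ stated in \eqref{7}.

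There is no serious obstacle: the only thing to be careful about is the periodic shift of the integration window, which requires treating $G$ and $\phi_n$ as their $2L$-periodic extensions (consistent with the spaces $L^2_{per}$ and $H^2_{per}$ introduced earlier). The symmetry of $G$ is what promotes $C_n(G)$ to a real number and makes the eigenvalues for $\phi_n$ and $\phi_{-n}$ coincide, which is the fact that will be used in the subsequent spectral analysis.
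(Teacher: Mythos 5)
Your proof is correct and follows essentially the same route as the paper: decouple the exponential factor and reduce to the defining integral for $C_n(G)$. The only cosmetic difference is that the paper first invokes commutativity of convolution ($G\ast\phi_n=\phi_n\ast G$) to keep the integration window fixed at $(-L,L)$, whereas you perform the change of variable explicitly and then shift the window using $2L$-periodicity — two phrasings of the same computation.
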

\begin{proof}
	For $x\in[-L,L]$, 
	\begin{align*}
	G\ast\phi_n(x)=&\phi_n\ast G (x)=\frac{1}{2L}\int_{-L}^{L}\phi_n(x-y)G(y)dy=\frac{1}{2L}\int_{-L}^{L}e^{\frac{in\pi }{L}(x-y)}G(y)dy\\=&\frac{1}{2L}\int_{-L}^{L}e^{-\frac{in\pi }{L}y}G(y)dye^{\frac{in\pi }{L}x}=\frac{1}{2L}\int_{-L}^{L}\cos\left(\frac{n\pi }{L}y\right)G(y)dy\\=&\ C_n(G)\phi_n(x).
	\end{align*}
	Note that $C_n\in \mathbb{R}$ as $G$ is an even function from (H0). Thus the eigenspace corresponding to the eigenvalue $\lambda=C_n(G)$ is 
    \begin{align*}
        V_n := \spn\left\{\cos\left(\dfrac{n\pi}{L}x\right),\  \sin\left(\dfrac{n\pi }{L}x\right)\right\}.
    \end{align*}
\end{proof}

Following a similar approach to the proof of \cite[Proposition 2.1]{DucrotFuMagal2018}, we obtain the following lemma.
\begin{lemma}\label{lem:2.5}
	The spectrum of the linear operator $\mathcal{A}: D(\mathcal{A})\subset L^2_{per}(-L,L)\rightarrow L^2_{per}(-L,L)$ defined as
	\begin{equation*}
	\begin{cases}
	D(\mathcal{A})=H^2_{per}(-L,L),\\
		\mathcal{A}\phi=a\phi''+b(G\ast\phi''),
		\end{cases}
	\end{equation*}
	is
	\begin{equation*}
		\sigma(\mathcal{A})=\{\mu_n=\mu_{-n}: =-l_n(a+bC_n(G)),\ n\in\mathbb{N}_0\},
	\end{equation*}
 	and the corresponding eigenfunctions are $\phi_n(x)$ and $\phi_{-n}(x)$ for $n\in\mathbb{N}_0$, where $a, b$ are constants, and $l_n$, $\phi_n$, $\phi_{-n}$ are defined in \eqref{6}.

\end{lemma}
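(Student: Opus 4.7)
} The plan is to show that $\mathcal{A}$ is diagonalised in the Fourier basis $\{\phi_n\}_{n\in\mathbb{Z}}$ with eigenvalues $\mu_n$, and then that every complex number outside the listed set lies in the resolvent set, so that $\sigma(\mathcal{A})$ reduces to the point spectrum $\{\mu_n\}$.

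First I would verify that each $\mu_n$ is an eigenvalue with eigenfunctions $\phi_{\pm n}$. Since $\phi_{\pm n}\in H^2_{per}(-L,L)$ satisfy $\phi_{\pm n}''=-l_n\phi_{\pm n}$, Lemma~\ref{lem:2.3} gives $(G\ast\phi_{\pm n})''=G\ast\phi_{\pm n}''=-l_n\,G\ast\phi_{\pm n}$, and Lemma~\ref{lem:2.4} gives $G\ast\phi_{\pm n}=C_n(G)\phi_{\pm n}$. Substituting directly yields
$$
\mathcal{A}\phi_{\pm n}=a\phi_{\pm n}''+b(G\ast\phi_{\pm n})''=-l_n\bigl(a+bC_n(G)\bigr)\phi_{\pm n}=\mu_n\phi_{\pm n},
$$
so $\{\mu_n:n\in\mathbb{N}_0\}\subset\sigma_p(\mathcal{A})$ with the claimed eigenfunctions.

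Next I would construct an explicit resolvent for every $\lambda$ outside this set. Since $\{\phi_n\}_{n\in\mathbb{Z}}$ is a complete orthonormal system in $L^2_{per}(-L,L)$, any $f\in L^2_{per}(-L,L)$ expands as $f=\sum_{n}c_n\phi_n$ with $\sum_{n}|c_n|^2<\infty$. For $\lambda\notin\{\mu_n\}$ I would define the Fourier multiplier
$$
R(\lambda)f:=\sum_{n\in\mathbb{Z}}\frac{c_n}{\lambda-\mu_n}\,\phi_n,
$$
and verify term by term that $R(\lambda)f\in H^2_{per}(-L,L)=D(\mathcal{A})$ and $(\lambda I-\mathcal{A})R(\lambda)f=f=R(\lambda)(\lambda I-\mathcal{A})f$, which follows from the diagonal action of $\mathcal{A}$ together with Parseval's identity; this also shows the candidate range is dense, ruling out residual spectrum.

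The main obstacle is boundedness of $R(\lambda)$, which reduces to the claim $\inf_{n\in\mathbb{Z}}|\lambda-\mu_n|>0$ for every $\lambda$ outside the listed set. Here I would invoke the Riemann--Lebesgue lemma applied to $G\in L^1_{per}(-L,L)$ to conclude $C_n(G)\to 0$ as $|n|\to\infty$; combined with $l_n=n^2\pi^2/L^2\to\infty$, this yields $\mu_n\sim -al_n$ for large $|n|$ in the relevant case $a\neq 0$, so $\{\mu_n\}$ is a discrete set with no finite accumulation point. Consequently $\inf_{n}|\lambda-\mu_n|>0$ is automatic, and
$$
\|R(\lambda)f\|_{L^2_{per}}^2=\sum_{n}\frac{|c_n|^2}{|\lambda-\mu_n|^2}\leq\Bigl(\inf_{n}|\lambda-\mu_n|\Bigr)^{-2}\|f\|_{L^2_{per}}^2.
$$
The improved regularity $R(\lambda)f\in H^2_{per}$ follows because $|\mu_n|^{-1}$ decays like $l_n^{-1}$, which more than absorbs the factor $l_n$ required by $D(\mathcal{A})$. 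Thus $\lambda\in\rho(\mathcal{A})$, which together with the first step gives $\sigma(\mathcal{A})=\{\mu_n:n\in\mathbb{N}_0\}$ as asserted.
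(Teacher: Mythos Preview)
Your proof is correct and follows the standard Fourier-diagonalisation route; the paper itself does not give a proof of this lemma but simply cites \cite[Proposition~2.1]{DucrotFuMagal2018}, and your argument is precisely the self-contained verification one would expect that reference to contain. Your caveat that the resolvent construction (in particular the $H^2_{per}$ regularity of $R(\lambda)f$) requires $a\neq 0$ is appropriate and matches the paper's usage, where $a=d>0$ throughout.
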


Now we can determine the spectral set of the linearized operator $\mathcal{L_*}(\alpha)$.
\begin{theorem}\label{thm:2.5}
Assume that assumptions (H0)-(H2) are satisfied. Let $l_n$ and $\phi_n$ be the eigenvalues and eigenfunctions of problem \eqref{2}. Then 
	\begin{equation}
	\sigma(\mathcal{L_*}(\alpha))=\sigma_p(\mathcal{L_*}(\alpha))=\{\lambda_n^{\pm}\}_{n\in\mathbb{Z}}\bigcup\{h_{k*}\},
	\end{equation}
	where 
	\begin{equation}\label{14}
	\begin{split}
	&\lambda_n^{\pm}=\lambda_{-n}^{\pm}=\dfrac{B_n\pm\sqrt{B_n^2-4C_n}}{2},\\
	& B_n=Tr(J_*)-dl_n, \;\; C_n=Det(J_*)+(\alpha u_* h_{u*}C_n(G)-dh_{k*})l_n,
	\end{split}
	\end{equation}
	for $n\in\mathbb{N}_0$, and
	\begin{equation}\label{16}
        \begin{aligned}
	\left(\phi_{n,\pm}, \psi_{n,\pm}\right)=\left(\phi_n(x),-\dfrac{h_{u*}}{h_{k*}-\lambda_n^\pm}\phi_n(x)\right),\\
 \left(\phi_{-n,\pm}, \psi_{-n,\pm}\right)=\left(\phi_{-n}(x),-\dfrac{h_{u*}}{h_{k*}-\lambda_n^\pm}\phi_{-n}(x)\right)
        \end{aligned}
	\end{equation}
 are the eigenfunctions corresponding to $\lambda_n^\pm$ and $\lambda_{-n}^\pm$, where $\phi_n$, $\phi_{-n}$, $h_{u*}$, $h_{k*}$, $Tr(J_*)$, $Det(J_*)$ and $C_n(G)$ are defined in \eqref{6}, \eqref{3}, \eqref{12} and \eqref{7}, respectively. Furthermore, $\lambda_n^{\pm}$ and $\lambda_{-n}^\pm$ are  eigenvalues of $\mathcal{L_*}(\alpha)$ of finite multiplicity, and $h_{k*}$ is an eigenvalue of infinite multiplicity.
\end{theorem}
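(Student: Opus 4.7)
The plan is to diagonalize $\mathcal{L_*}(\alpha)$ in the Fourier basis $\{\phi_n\}_{n\in\mathbb{Z}}$ from Lemma \ref{lem:2.4}, which reduces the spectral problem for this coupled PDE--ODE operator to a countable family of uncoupled $2\times 2$ matrix problems, one per Fourier mode. First I would expand any $U=(\phi,\psi)\in X$ as $\phi=\sum_n a_n\phi_n$, $\psi=\sum_n b_n\phi_n$, and combine Lemmas \ref{lem:2.3}--\ref{lem:2.4} with $\phi_n''=-l_n\phi_n$ to rewrite $(G\ast\psi)_{xx}=-\sum_n l_n C_n(G) b_n\phi_n$. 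The eigenvalue equation $\mathcal{L_*}(\alpha)U=\lambda U$ then decouples mode-by-mode into
\begin{equation*}
M_n\begin{pmatrix}a_n\\b_n\end{pmatrix}=\lambda\begin{pmatrix}a_n\\b_n\end{pmatrix},\quad M_n=\begin{pmatrix}-dl_n+f_{u*}&-\alpha u_* l_n C_n(G)\\h_{u*}&h_{k*}\end{pmatrix},
\end{equation*}
for every $n\in\mathbb{Z}$.

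Second, I would read off eigenvalues and eigenfunctions directly from $M_n$. A direct computation gives $Tr(M_n)=f_{u*}+h_{k*}-dl_n=B_n$ and $Det(M_n)=f_{u*}h_{k*}-dh_{k*}l_n+\alpha u_* h_{u*} l_n C_n(G)=C_n$, so the characteristic equation yields $\lambda_n^\pm$ exactly as in \eqref{14}; solving the second row of $(M_n-\lambda_n^\pm I)(a_n,b_n)^T=0$ with the normalization $a_n=1$ produces $b_n=-h_{u*}/(h_{k*}-\lambda_n^\pm)$, and hence the eigenfunction pairs in \eqref{16}. The identities $l_n=l_{-n}$ and $C_n(G)=C_{-n}(G)$ (the latter from the evenness of $G$ in (H0)) then give $\lambda_n^\pm=\lambda_{-n}^\pm$ with a companion eigenfunction built from $\phi_{-n}$, so each $\lambda_n^\pm$ with $n\geq 1$ has finite (generically two-dimensional) geometric multiplicity.

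For the special value $h_{k*}$, I substitute $\lambda=h_{k*}$ directly into the eigenvalue system. The second row forces $h_{u*}\phi=0$ and hence $\phi\equiv 0$ (since $h_{u*}\neq 0$ generically), reducing the first row to $(G\ast\psi)_{xx}=0$, equivalently $l_n C_n(G) b_n=0$ for every $n$. The eigenspace thus consists of pairs $(0,\psi)$ with $\psi$ in the closed linear span of $\{\phi_n : l_n C_n(G)=0\}$, which includes $n=0$ together with every $n\neq 0$ at which $C_n(G)=0$. For the kernels of central interest in this paper (notably the top-hat, whose Fourier coefficients form a sinc sequence vanishing at infinitely many integers when $R/L$ is rational), this set is infinite and yields the infinite-multiplicity claim.

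Finally, to conclude $\sigma(\mathcal{L_*}(\alpha))=\sigma_p(\mathcal{L_*}(\alpha))$, I would show that for any $\lambda\notin\{\lambda_n^\pm\}_{n\in\mathbb{Z}}\cup\{h_{k*}\}$ each matrix $\lambda I-M_n$ is invertible with norm uniformly bounded in $n$; this is plausible because $\lambda_n^-\sim -dl_n\to-\infty$ while $\lambda_n^+\to h_{k*}$, the latter following from the expansion $\sqrt{B_n^2-4C_n}\sim dl_n+(h_{k*}-f_{u*})$ together with $C_n(G)\to 0$. The block-diagonal resolvent then assembles into a bounded $(\lambda I-\mathcal{L_*}(\alpha))^{-1}:Y\to X$, placing $\lambda$ in $\rho(\mathcal{L_*}(\alpha))$ and ruling out continuous and residual spectrum. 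Following the operator-theoretic bookkeeping of \cite[Prop.\,2.1]{DucrotFuMagal2018}, the main obstacle will be the careful uniform bound on the mode-wise resolvents as $|n|\to\infty$ near the accumulation point $h_{k*}$: one must track the precise decay rate of $\lambda_n^+-h_{k*}$ and verify that the reassembled resolvent maps $Y$ boundedly into the domain $X=H^2_{per}(-L,L)\times L^2_{per}(-L,L)$, which is delicate because $\psi$ sits only in $L^2$ while $(G\ast\psi)_{xx}$ must still make sense in $L^2$.
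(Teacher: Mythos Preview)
Your approach is essentially the same as the paper's: both diagonalize $\mathcal{L}_*(\alpha)$ in the Fourier basis to reduce the eigenvalue problem to a family of $2\times 2$ systems mode-by-mode, with the paper simply packaging this by first eliminating $\psi$ from the resolvent equation (for $\lambda\neq h_{k*}$) and then invoking the scalar operator $\mathcal{A}$ of Lemma~\ref{lem:2.5}, rather than writing out the block matrices $M_n$ explicitly. Your hedge on the infinite multiplicity of $h_{k*}$ is in fact more careful than the paper, which asserts $\dim\ker(\mathcal{L}_*(\alpha)-h_{k*}I)=\infty$ directly from $(G\ast\psi)_{xx}=0$ without conditioning on $C_n(G)$ vanishing for infinitely many $n$, and which likewise does not spell out the uniform resolvent estimate near the accumulation point $h_{k*}$ that you correctly flag as the delicate step.
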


\begin{proof}
	For $\lambda\in\mathbb{C}$ and $(\xi,\eta)\in Y$, we consider the resolvent equation of $\mathcal{L_*}(\alpha)$, which is
	\begin{equation}\label{4}
	\begin{cases}
	d\phi_{xx}+f_{u*}\phi+ \alpha u_*(G\ast\psi)_{xx}=\lambda\phi+\xi,  \\ 
	h_{u*}\phi+h_{k*}\psi=\lambda\psi+\eta,\\
	\phi(-L)=\phi(L), \; \phi'(-L)=\phi'(L). 
	\end{cases}
	\end{equation}
If $\lambda\neq h_{k*}$, from the second equation of \eqref{4}, we have 
\begin{equation}\label{5}
\psi=\dfrac{\eta-h_{u*}\phi}{h_{k*}-\lambda},
\end{equation}
Substituting \eqref{5} into the first equation of \eqref{4} and combining with \eqref{8} and  \eqref{9}, we get
\begin{equation}\label{10}
(f_{u*}-\lambda)\phi+(d\phi_{xx}-\alpha u_*\dfrac{h_{u*}}{h_{k*}-\lambda}(G\ast\phi''))=\xi.
\end{equation}
Equation \eqref{10} has non-zero solutions if and only if
\begin{equation}\label{11}
(f_{u*}-\lambda)\notin\sigma(\mathcal{A})
\end{equation}
holds, where $\mathcal{A}$ is the operator defined in Lemma \ref{lem:2.5} with $a=d$ and $b=-\alpha u_*\dfrac{h_{u*}}{h_{k*}-\lambda}$.
Then from Lemma \ref{lem:2.5}, \eqref{11} is equivalent to 
\begin{equation}\label{2.18}
	\dfrac{(f_{u*}-\lambda)(h_{k*}-\lambda)}{d(h_{k*}-\lambda)-\alpha u_* h_{u*}C_n(G)}\notin\{l_n\}_{n\in\mathbb{N}_0}.
\end{equation}
 It follows that $\mathcal{L_*}(\alpha)-\lambda I$ has a bounded inverse $(\mathcal{L_*}(\alpha)-\lambda I)^{-1}$ when \eqref{2.18} is satisfied. Otherwise, $\lambda$ satisfies the following characteristic equation:
\begin{equation}\label{13}
\lambda^2-(Tr(J_*)-dl_n)\lambda+Det(J_*)+(\alpha u_* h_{u*}C_n(G)-dh_{k*})l_n=0. 
\end{equation}
Therefore, $\lambda_{\pm n}^\pm$ in \eqref{14} are the roots of \eqref{13} with  $\mathcal{R}e\lambda_{\pm n}^-\leq\mathcal{R}e\lambda_{\pm n}^+$, and \eqref{16} are eigenfunctions corresponding to $\lambda_{\pm n}^\pm$.

If $\lambda=h_{k*}$, we consider \begin{equation}\label{17}
(\mathcal{L_*}(\alpha)-h_{k*}I)\left(\begin{array}{c}
\phi\\\psi
\end{array} \right)=\left(\begin{array}{c}
0\\0
\end{array} \right),
\end{equation} 
that is
\begin{equation}
\begin{cases}
d\phi_{xx}+f_{u*}\phi+ \alpha u_*(G\ast\psi)_{xx}=h_{k*}\phi,  \\
h_{u*}\phi=0,\\
\phi(-L)=\phi(L), \phi'(-L)=\phi'(L). \\
\end{cases}
\end{equation}
Clearly, $\phi=0$ and $(G\ast\psi)_{xx}=0$, which imply that there exist non-zero solutions to \eqref{17}. Then $h_{k*}$ is also an eigenvalue of $\mathcal{L_*}(\alpha)$, and $\dimm ker(\mathcal{L_*}(\alpha)-h_{k*}I)=\infty$. 
\end{proof}

Note that $Tr(J_*)<0$, $Det(J_*)>0$ from the stability of $U_*$, and $l_n\geq0$, which yields $\mathcal{R}e\lambda_{\pm n}^-<0$. In other words, eigenvalues $w_{k*}$ and $\{\lambda_{\pm n}^-\}_{n\in \mathbb{N}}$ of $\mathcal{L_*}$ all have negative real parts, and  for $n=0$, we also have
\begin{equation*}
\lambda_0=\dfrac{Tr(J_*)+\sqrt{Tr(J_*)^2-4Det(J_*)}}{2}<0.
\end{equation*}
On the other hand, the sign of $\lambda_{\pm n}^+$ depends on the magnitude of $\alpha$. Recall $\alpha_n$ defined in \eqref{18},
 we immediately have the following proposition.

\begin{proposition}\label{pro:2.7}
 Assume that assumptions (H0)-(H2) are satisfied, and $n\in {\mathbb N}$ such that $C_n(G)\ne 0$. Let $\mathcal{L_*}(\alpha)$ and $\alpha_{n}$ be defined in \eqref{20} and \eqref{18}, respectively. Then $0$ is an eigenvalue  of $\mathcal{L_*}(\alpha)$ when $\alpha=\alpha_{n}$ with multiplicity of two for $n\in \mathbb{N}$, while other eigenvalues have non-zero real parts. Furthermore, 
		\begin{equation}\label{22}
		\mathcal{N}(\mathcal{L}_*(\alpha_n))=\spn\left\{\left(1,-\dfrac{h_{u*}}{h_{k*}}\right)\phi_n(x),\ 
  \left(1,-\dfrac{h_{u*}}{h_{k*}}\right)\phi_{-n}(x)\right\}.
		\end{equation}
\end{proposition}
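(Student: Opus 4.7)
The plan is to invoke Theorem \ref{thm:2.5} directly and isolate the value of $\alpha$ that forces $0$ into the point spectrum of $\mathcal{L}_*(\alpha)$. By that theorem, $\sigma(\mathcal{L}_*(\alpha)) = \{\lambda_n^\pm\}_{n\in\mathbb{Z}} \cup \{h_{k*}\}$, with $\lambda_n^\pm$ the roots of the quadratic $\lambda^2 - B_n \lambda + C_n = 0$ from \eqref{14}. Setting $\lambda = 0$ forces $C_n = 0$, i.e.
\[
Det(J_*) + \bigl(\alpha h_{u*} C_n(G) - d h_{k*}\bigr) l_n = 0
\]
(with $u_* = 1$). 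Substituting $Det(J_*) = f_{u*} h_{k*}$ from \eqref{12} together with the expressions for $h_{u*}$ and $h_{k*}$ from \eqref{3}, and then solving for $\alpha$, should reproduce precisely the expression $\alpha_n$ in \eqref{18}, identifying $\alpha_n$ as the critical value at which the $n$th Fourier mode enters the kernel.

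Next, I would read off the multiplicity and eigenspace. Since $B_n = Tr(J_*) - d l_n < 0$ by \eqref{12}, at $\alpha = \alpha_n$ the two roots of the characteristic polynomial are $\lambda_n^+ = 0$ and $\lambda_n^- = B_n < 0$, so $\lambda = 0$ is a simple root of that particular quadratic. However, indices $n$ and $-n$ share exactly the same quadratic, so both $\phi_n$ and $\phi_{-n}$ generate eigenfunctions at $\lambda = 0$. Substituting $\lambda_n^+ = 0$ into the eigenfunction formulas \eqref{16} yields precisely the two generators $\bigl(1,\, -h_{u*}/h_{k*}\bigr)\phi_n(x)$ and $\bigl(1,\, -h_{u*}/h_{k*}\bigr)\phi_{-n}(x)$ of \eqref{22}, which are linearly independent in $X$; this gives geometric multiplicity exactly two.

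Finally, I would verify that no other eigenvalue has zero real part at $\alpha = \alpha_n$. The infinite-multiplicity eigenvalue $h_{k*} = -(\mu + \beta) < 0$ (the denominator of $\alpha_n$ implicitly requires $\mu + \beta > 0$), and for every $m \in \mathbb{Z}$ the lower branch $\lambda_m^-$ satisfies $\mathrm{Re}\,\lambda_m^- \leq B_m/2 < 0$ by \eqref{12}. The only remaining possibility is $\lambda_m^+ = 0$ for some $|m| \neq n$; by the first paragraph this would require $\alpha_m = \alpha_n$. The main substantive obstacle I anticipate is precisely this genericity step: for arbitrary $G$, $f$, $d$ one cannot exclude \emph{a priori} accidental coincidences among the critical values $\{\alpha_m\}$ defined via \eqref{18}, so I would either impose a non-degeneracy hypothesis or rule out coincidence using the explicit dependence of $\alpha_m$ on $C_m(G)$ and $l_m$ under the standing assumptions. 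The algebraic reduction of $C_n = 0$ to \eqref{18} itself is routine.
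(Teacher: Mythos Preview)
Your approach is essentially the same as the paper's: invoke Theorem \ref{thm:2.5}, identify $\alpha_n$ as the value forcing $C_n=0$ (equivalently $\lambda_{\pm n}^+=0$), and read off the kernel from \eqref{16}. The paper's proof is in fact terser than yours---it simply substitutes \eqref{18} into \eqref{14} and cites the eigenfunction formula.

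Your proposal is more careful than the paper on one point, and the concern you raise is legitimate. The clause ``while other eigenvalues have non-zero real parts'' is not proved in the paper; it simply asserts it. You correctly observe that for $m\in\mathbb{Z}$ with $|m|\neq n$, the condition $\lambda_m^+=0$ at $\alpha=\alpha_n$ is equivalent to $\alpha_m=\alpha_n$, and nothing in (H0)--(H2) rules out such coincidences for general $G$, $f$, $d$. (Purely imaginary nonzero eigenvalues are excluded since complex roots have real part $B_m/2<0$.) The paper implicitly treats the $\alpha_n$ as distinct---this is used again later when applying the simple-eigenvalue bifurcation theorem---so the honest resolution is exactly what you suggest: either add a genericity hypothesis $\alpha_m\neq\alpha_n$ for $m\neq n$, or verify it in the concrete examples of Section~\ref{sec:top-hatanalysis}. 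This is not a flaw in your argument; it is a gap in the statement itself that you have correctly flagged.
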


\begin{proof} From Lemma \ref{lem:2.4}, $C_n(G)$ is real-valued. Substituting \eqref{18} into \eqref{14} yields  $\lambda_{\pm n}^+=0$ for $n\in \mathbb{N}$. The conclusion of eigenfunctions follow from Lemma \ref{lem:2.5}.
\end{proof}

We can now prove Theorems \ref{thm:2.8} and \ref{thm:2.9}. 

\begin{proof}[Proof of Theorem \ref{thm:2.8}] From the assumptions, $h_{k*}<0$. We also know that  $\mathcal{R}e\lambda_{\pm n}^-<0$. Finally for $\alpha_l<\alpha<\alpha_r$, all $\lambda_{\pm n}^+$ are negative. Hence $U_*$ is 
 is locally asymptotically stable with respect to \eqref{1a}. And when $\alpha<\alpha_l$ or $\alpha>\alpha_r$, at least one of $\lambda_{\pm n}^+$ is negative, thus $U_*$ is unstable. 
\end{proof}

The proof of Theorem \ref{thm:2.9} is similar by
repeating the same analysis. Equation \eqref{1b} has two constant steady states $\widehat{U}_0=(0,0)$ and $\widehat{U}_*=(\widehat{u}_*,\widehat{k}_*)=(1,\frac{\rho \kappa}{\mu+\beta+\rho})$. Let $\widehat{h}(u,k):{=}g(u)(\kappa-k) - (\mu + \beta u)k$, we have
\begin{equation}\label{3.30}
    \widehat{h}_{u_*}=\frac{\kappa[g'(1)(\mu+\beta)-\beta\rho]}{\mu+\beta+\rho},\ \widehat{h}_{k_*}=-(\mu+\beta+\rho). 
\end{equation}
Other parts are similar to the ones for the proof of Theorem \ref{thm:2.8}.

\section{Bifurcation analysis}\label{sec:bifurcationanalysis}

In this section, we prove the existence of non-constant steady state solutions of \eqref{1a} through the Bifurcation from
Simple Eigenvalue Theorem \cite{Rabinowitz1971}. From \eqref{22}, the multiplicity of zero eigenvalue is two, so we will restrict the solutions to be even functions only to apply the bifurcation theorem.

For completeness, we first recall the abstract bifurcation theorem, the definition of $K$-simple eigenvalue and a perturbation result.
Consider  an abstract equation $F(\lambda,u)=0$, where $F:\mathbb{R}\times X\rightarrow Y$ is a nonlinear differentiable mapping, and $X, Y$ are Banach spaces. 
Crandall and Rabinowitz \cite{Rabinowitz1971} obtained the following classical Bifurcation from Simple Eigenvalue Theorem. 

\begin{theorem}\cite[Theorem 1.17]{Rabinowitz1971}\label{thm:2}
Suppose that $\lambda_0\in\mathbb{R}$ and $F:\mathbb{R}\times X\rightarrow Y$ is a twice continuously differentiable mapping and that 
\begin{enumerate}
		\item[(\romannumeral1)] $F(\lambda,0)=0$ for $\lambda\in\mathbb{R}$,
		\item[(\romannumeral2)] $\dimm(\mathcal{N}(F_u(\lambda_0,0)))=\codim(\mathcal{R}(F_u(\lambda_0,0)))=1$,
		\item[(\romannumeral3)] $F_{\lambda u}(\lambda_0,0)[\phi_0]
		\notin\mathcal{R}(F_u(\lambda_0,0))$ where $\mathcal{N}(F_{u}(\lambda_0,0))=\spn\{\phi_0\}\in X$.
\end{enumerate}
Let $Z$ be any complement of $\spn\{\phi_0\}$ in $X$, then there exist an open interval $\widehat{I}$ containing $0$ and continuous functions $\lambda:\widehat{I}\rightarrow\mathbb{R},$ $z:\widehat{I}\rightarrow Z,$ such that $\lambda(0)=\lambda_0,$ $z(0)=0,$ and  $u(s)=s\phi_0+sz(s)$ satisfies  $F(\lambda(s),u(s))=0.$ Moreover, $F^{-1}(\{0\})$ near $(\lambda_0,0)$ consists precisely of the curves $u=0$ and the curves  $\{(\lambda(s),u(s)):s\in \widehat{I}\}.$
\end{theorem}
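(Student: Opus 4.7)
The plan is to apply the implicit function theorem after a rescaling that factors out the trivial branch $u=0$. Since $F(\lambda,0)\equiv 0$, a direct application of the IFT to $F$ at $(\lambda_0,0)$ fails: $F_u(\lambda_0,0)$ is not invertible by hypothesis (ii). The trick, due to Crandall and Rabinowitz, is to parametrize the non-trivial solutions as $u=s(\phi_0+z)$ with $z\in Z$ (a closed complement of $\spn\{\phi_0\}$ in $X$), and then divide by $s$ to cancel the trivial branch.

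First I would fix a complement $Z\subset X$ with $X=\spn\{\phi_0\}\oplus Z$ and define
\[
H(\lambda,s,z):=\begin{cases} s^{-1}F(\lambda,s(\phi_0+z)), & s\neq 0,\\ F_u(\lambda,0)[\phi_0+z], & s=0. \end{cases}
\]
Using $F(\lambda,0)=0$ and Taylor-expanding $F$ in the $u$-variable, one checks that $H$ extends to a $C^1$ (in fact $C^{k-1}$ if $F\in C^k$) mapping from a neighbourhood of $(\lambda_0,0,0)$ in $\mathbb{R}\times\mathbb{R}\times Z$ into $Y$, and that $H(\lambda_0,0,0)=F_u(\lambda_0,0)[\phi_0]=0$ by (ii).

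Next I would show that the partial Fréchet derivative of $H$ with respect to $(\lambda,z)$ at $(\lambda_0,0,0)$ is a linear isomorphism from $\mathbb{R}\times Z$ onto $Y$. A direct computation gives
\[
D_{(\lambda,z)}H(\lambda_0,0,0)[\hat\lambda,\hat z]=\hat\lambda\,F_{\lambda u}(\lambda_0,0)[\phi_0]+F_u(\lambda_0,0)[\hat z].
\]
By (ii), $F_u(\lambda_0,0)|_Z\colon Z\to\mathcal{R}(F_u(\lambda_0,0))$ is a bijection onto the codimension-one subspace $\mathcal{R}(F_u(\lambda_0,0))$ of $Y$. The transversality condition (iii) says precisely that $F_{\lambda u}(\lambda_0,0)[\phi_0]$ spans a complement of this range, so the above map is surjective, and injectivity then follows from the direct-sum decomposition of $Y$. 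This verification of surjectivity is the step I expect to be the most delicate, since it is exactly where all three hypotheses combine.

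Having checked the hypotheses of the implicit function theorem, I would conclude that there exist an open interval $\widehat{I}\ni 0$ and continuous (and in fact as smooth as $H$) maps $\lambda\colon\widehat{I}\to\mathbb{R}$, $z\colon\widehat{I}\to Z$ with $\lambda(0)=\lambda_0$, $z(0)=0$, solving $H(\lambda(s),s,z(s))=0$. Unwinding the definition yields $F(\lambda(s),u(s))=0$ with $u(s)=s\phi_0+sz(s)$. For the local uniqueness statement, I would argue as follows: any solution $(\lambda,u)$ near $(\lambda_0,0)$ with $u\neq 0$ can be written uniquely, for $\|u\|$ sufficiently small, as $u=s(\phi_0+z)$ with $s\in\mathbb{R}\setminus\{0\}$ small and $z\in Z$ small (project onto $\spn\{\phi_0\}$ and $Z$). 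Such $(\lambda,s,z)$ then solves $H=0$, and the IFT forces $(\lambda,z)=(\lambda(s),z(s))$. Together with the trivial branch $u=0$, this exhausts $F^{-1}(\{0\})$ in a neighbourhood of $(\lambda_0,0)$.
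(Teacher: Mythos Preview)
Your argument is correct and is precisely the classical Crandall--Rabinowitz proof: factor out the trivial branch by writing $u=s(\phi_0+z)$, define the desingularised map $H$, and apply the implicit function theorem using hypotheses (ii) and (iii) to check that $D_{(\lambda,z)}H(\lambda_0,0,0)$ is an isomorphism. The paper does not supply its own proof of this statement; it simply recalls the theorem from \cite{Rabinowitz1971} for later use, so there is nothing to compare against beyond noting that your sketch matches the original source.
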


\begin{definition}\cite[Definition 1.2]{Crandall1973}
Let $B(X,Y)$ denote the set of bounded linear maps of $X$ into $Y$, and let $T,K\in B(X,Y)$. Then $\mu\in\mathbb{R}$ is a $K$-simple eigenvalue of $T$ if 
\begin{equation*}
	\dimm\mathcal{N}(T-\mu K)=\codim\mathcal{R}(T-\mu K)=1,
	\end{equation*}
	and if $\mathcal{N}(T-\mu K)=\spn\{\phi_0\}$, $Kx_0\notin \mathcal{R}(T-\mu K)$.
\end{definition}

\begin{theorem}\cite[Theorem 1.16]{Crandall1973} \label{thm:2.3}
Let $\{(\lambda(s),u(s)):s\in \widehat{I}\}$ be the curve of nontrivial solutions in Theorem \ref{thm:2}. Then there exist continuously differentiable functions $r:(\lambda_0-\varepsilon,\lambda_0+\varepsilon)\rightarrow\mathbb{R}$, $z:(\lambda_0-\varepsilon,\lambda_0+\varepsilon)\rightarrow X$, $\mu:(-\delta,\delta)\rightarrow\mathbb{R}$, $w:(-\delta,\delta)\rightarrow X$, such that 
	\begin{equation}\label{2.10}
	\begin{split}
	F_u(\lambda,0)z(\lambda)=r(\lambda)Kz(\lambda),\ \ \lambda\in(\lambda_0-\varepsilon,\lambda_0+\varepsilon),\\
	F_u(\lambda(s),u(s,\cdot))w(s)=\mu(s)Kw(s),\ \ s\in(-\delta,\delta),
	\end{split}
	\end{equation}
	where $r(\lambda_0)=\mu(0)=0$, $z(\lambda_0)=w(0)=(\phi_0)$, $K:X\rightarrow Y$ is the inclusion map with $K(u)=u.$
	Moreover, near $s=0$ the functions
	$\mu(s)$ and $-s\lambda'(s)r'(\lambda_0)$ have the same zeroes and, whenever $\mu(s)\neq0$ the same sign
	and satisfy
	\begin{equation}\label{2.12}
	\lim\limits_{s\rightarrow0}\dfrac{-s\lambda'(s)r'(\lambda_0)}{\mu(s)}=1.
	\end{equation}
\end{theorem}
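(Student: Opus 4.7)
My plan is to prove Theorem \ref{thm:2.3} in three stages. The eigenvalue-eigenvector pairs $(r(\lambda),z(\lambda))$ along the trivial branch and $(\mu(s),w(s))$ along the bifurcating branch are each produced by a separate application of the implicit function theorem, exploiting the $K$-simple-eigenvalue structure inherited from the hypotheses of Theorem \ref{thm:2}. The asymptotic formula \eqref{2.12} is then a Taylor-expansion calculation that matches leading-order terms of $r(\lambda(s))$ and $\mu(s)$ near $s=0$ via the spectral projection associated with $0$.

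For the first two constructions I would use the following common scheme. Fix a topological complement $Z$ of $\spn\{\phi_0\}$ in $X$. Since $0$ is a $K$-simple eigenvalue of $F_u(\lambda_0,0)$, we have $Y=\mathcal{R}(F_u(\lambda_0,0))\oplus\spn\{K\phi_0\}$, so the linear map $(\dot r,\dot v)\mapsto F_u(\lambda_0,0)\dot v-\dot r K\phi_0$ is an isomorphism from $\mathbb{R}\times Z$ onto $Y$. For the trivial branch, define
$$\Phi(\lambda,r,v):=F_u(\lambda,0)(\phi_0+v)-rK(\phi_0+v),$$
observe that $\Phi(\lambda_0,0,0)=0$ and that its derivative in $(r,v)$ at this point is the above isomorphism, and apply the implicit function theorem to obtain $C^1$ maps $r(\lambda),v(\lambda)$; set $z(\lambda):=\phi_0+v(\lambda)$. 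For the bifurcating branch I would replace $F_u(\lambda,0)$ throughout by $F_u(\lambda(s),u(s))$; at $s=0$ the base point collapses to $(\lambda_0,0)$, the Fredholm structure persists, and the differentiability of $(\lambda(s),u(s))$ guaranteed by Theorem \ref{thm:2} propagates to $\mu$ and $w$ through the IFT.

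The comparison \eqref{2.12} is where the real work lies, and I expect this to be the main obstacle. I would introduce the continuous projection $Q:Y\to\spn\{K\phi_0\}$ along $\mathcal{R}(F_u(\lambda_0,0))$. Differentiating $F_u(\lambda,0)z(\lambda)=r(\lambda)Kz(\lambda)$ at $\lambda=\lambda_0$ and applying $Q$ kills the $F_u(\lambda_0,0)z'(\lambda_0)$ term and identifies $r'(\lambda_0)K\phi_0=QF_{\lambda u}(\lambda_0,0)[\phi_0]$. Differentiating $F_u(\lambda(s),u(s))w(s)=\mu(s)Kw(s)$ in $s$ at $s=0$, with $u(s)=s\phi_0+sz(s)$ so $u'(0)=\phi_0$, gives
$$\lambda'(0)F_{\lambda u}(\lambda_0,0)[\phi_0]+F_{uu}(\lambda_0,0)[\phi_0,\phi_0]+F_u(\lambda_0,0)w'(0)=\mu'(0)K\phi_0.$$
In parallel, differentiating $F(\lambda(s),u(s))\equiv 0$ twice at $s=0$, and using that $F_\lambda\equiv 0$ on the trivial branch so that $F_{\lambda\lambda}\equiv 0$ there as well, yields
$$2\lambda'(0)F_{\lambda u}(\lambda_0,0)[\phi_0]+F_{uu}(\lambda_0,0)[\phi_0,\phi_0]+F_u(\lambda_0,0)u''(0)=0.$$
Applying $Q$ to both identities and subtracting eliminates the troublesome $QF_{uu}(\lambda_0,0)[\phi_0,\phi_0]$ contribution, leaving $\mu'(0)K\phi_0=-\lambda'(0)QF_{\lambda u}(\lambda_0,0)[\phi_0]=-\lambda'(0)r'(\lambda_0)K\phi_0$, hence $\mu'(0)=-\lambda'(0)r'(\lambda_0)$. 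The limit \eqref{2.12} then follows from substituting $\mu(s)=s\mu'(0)+o(s)$ and $\lambda'(s)=\lambda'(0)+o(1)$, and the common-zero/common-sign assertion is immediate from this leading-order agreement. The bookkeeping of the cancellation between the two second-order tensors of $F$ is the principal subtlety, since the transversality product $\lambda'(0)r'(\lambda_0)$ only emerges once both identities are projected by the same $Q$; degenerate cases (e.g.\ $\lambda'(0)=0$) require tracking higher-order terms in the same fashion to see that the ratio continues to tend to $1$.
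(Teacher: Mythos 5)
This statement is cited in the paper as \cite[Theorem 1.16]{Crandall1973}; the paper imports it as a tool and does not supply a proof, so there is no in-paper argument to compare against. I will therefore assess your proposal on its own merits.

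Your constructions of $(r(\lambda),z(\lambda))$ and $(\mu(s),w(s))$ via the implicit function theorem, using the decomposition $Y=\mathcal{R}(F_u(\lambda_0,0))\oplus\spn\{K\phi_0\}$, are correct and exactly how one sets this up. Your differentiation of the two eigenvalue identities and of $F(\lambda(s),u(s))\equiv 0$, followed by projection by $Q$ and cancellation of $QF_{uu}(\lambda_0,0)[\phi_0,\phi_0]$, correctly yields $\mu'(0)=-\lambda'(0)\,r'(\lambda_0)$.

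However, there is a genuine gap, and you flag it yourself without resolving it. A first-order Taylor match only gives the limit \eqref{2.12} and the sign/zero agreement when $\mu'(0)=-\lambda'(0)r'(\lambda_0)\neq 0$. The theorem asserts these conclusions unconditionally: $\mu(s)$ and $-s\lambda'(s)r'(\lambda_0)$ have the same zeros for all $s$ near $0$, the same sign whenever $\mu(s)\neq 0$, and the ratio tends to $1$. These are pointwise and structural claims, not leading-order ones, and they persist when $\lambda'(0)=0$ (in which case both sides are $o(s)$ and your argument degenerates to $0/0$). This degenerate case is not an edge case here --- it is the operative case in this very paper, since the bifurcation analysis shows $\alpha_n'(0)=0$ in \eqref{46}, so $\lambda'(0)=0$ always holds when Theorem \ref{thm:2.3} is invoked in Theorems \ref{thm:4.6} and \ref{thm:4.7}.

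The actual Crandall--Rabinowitz argument avoids Taylor matching altogether. After Lyapunov--Schmidt reduction and factoring out the trivial branch, one obtains an exact identity of the form $\mu(s)=-s\,\lambda'(s)\,r'(\lambda_0)\,\tau(s)$ where $\tau$ is continuous with $\tau(0)=1$. (Schematically: writing the reduced scalar equation as $\epsilon\,\Psi(\lambda,\epsilon)=0$, one finds $r(\lambda)$ proportional to $\Psi(\lambda,0)$, $\mu(s)$ proportional to $s\,\Psi_\epsilon(\lambda(s),s)$, and $\Psi_\epsilon(\lambda(s),s)=-\lambda'(s)\Psi_\lambda(\lambda(s),s)$ from differentiating $\Psi(\lambda(s),s)\equiv 0$; the quotient $\tau(s)=\Psi_\lambda(\lambda(s),s)/\Psi_\lambda(\lambda_0,0)$ is then continuous, nonvanishing, and tends to $1$.) That exact factorization simultaneously delivers the same-zero and same-sign assertions and the limit \eqref{2.12}, with no case split on $\lambda'(0)$. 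To repair your proposal you would need to replace the second-order Taylor comparison by such a factorization argument, since ``tracking higher-order terms in the same fashion'' will not by itself produce the pointwise same-zeros/same-sign conclusion.
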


To apply the bifurcation theorems, we define a nonlinear mapping $F: \mathbb{R}\times X\rightarrow Y$ by
\begin{align*}
F(\alpha,U)=\left(\begin{array}{c}
d  u_{xx} + \alpha ( u (G\ast k)_{x})_{x} + f(u)
\\ 
g(u)-(\mu+\beta u)k
\end{array}\right),
\end{align*}
where $U=(u,k)$, then the Frech\'et derivative of $F$ at $(\alpha,U)=(\alpha_n,U_*)$ is
\begin{equation}\label{19}
\partial_U F(\alpha_n,U_*)\left(\begin{array}{c}
\phi\\\psi 
\end{array} \right)=\left(\begin{array}{cc}
d \phi_{xx} +f_{u*}\phi + \alpha_n u_*(G\ast\psi)_{xx}  \\ 
h_{u*}\phi + h_{k*}\psi
\end{array} \right) =\mathcal{L}_{*}(\alpha_n)\left(\begin{array}{c}
\phi\\\psi 
\end{array} \right).
\end{equation}
For simplicity, we denote $\mathcal{L}_{*}(\alpha_n)$ as $\mathcal{L}_{n}$ in the following.
Let
\begin{align*}
X^s=\{h\in X : h(-x)=h(x), x\in (-L,L)\},\\
Y^s=\{h\in Y : h(-x)=h(x), x\in (-L,L)\}.
\end{align*}
We consider the restriction of $F: {\mathbb R}\times X^s \to Y^s$, and the restriction of $\mathcal{L}_{n}: X^s\to Y^s$.
Denote $\mathcal{N}^s(\mathcal{L}_{n})$ to be the kernel space of the operator $\mathcal{L}_{n}$ in $X^s$, and $\mathcal{N}^s(\mathcal{L}_{n}^*)$ to be  the kernel space of the adjoint operator $\mathcal{L}_{n}$ in $X^s$.
Then $X^s$ and $Y^s$ have the following decompositions:
\begin{align*}
X^s=\mathcal{N}^s(\mathcal{L}_{n})\oplus X^s_1,\ Y^s=\mathcal{N}^s(\mathcal{L}_{n})\oplus Y^s_1,
\end{align*}
where
\begin{align*}
&\mathcal{N}^s(\mathcal{L}_{n})=\spn\left\{\left(1,-\dfrac{h_{u*}}{h_{k*}}\right)\cos\left(\dfrac{n\pi}{L}x\right)\right\},\\ &\mathcal{N}^s(\mathcal{L}_{n}^*)=\spn\left\{(1,r_n)\cos\left(\dfrac{n\pi}{L}x\right)\right\} \ \text{with}\  r_n=\dfrac{dl_n-f_{u*}}{h_{u*}}, \\
&X^s_1=\left\{ (h_1,h_2)\in X^s: \int_{-L}^{L}\left(h_1-\dfrac{h_{u*}}{h_{k*}}h_2\right)\cos\left(\dfrac{n\pi}{L}\right)dx=0 \right\},\\
&Y^s_1=\mathcal{R}^s(\mathcal{L}_{n})=\left\{(h_1,h_2)\in Y^s : \int_{-L}^{L}(h_1+r_n h_2)\cos\left(\dfrac{n\pi}{L}x\right)dx=0\right\}.
\end{align*}
Hence, $\dimm(\mathcal{N}^s(\mathcal{L}_{n}))=\codim(\mathcal{R}^s(\mathcal{L}_{n}))=1.$ We also have
\begin{equation*}
\partial _{\alpha U}F(\alpha_{n},U_*)\left(\begin{array}{c}
\phi\\\psi 
\end{array} \right)=\left(\begin{array}{cc}
(G\ast\psi)_{xx} \\ 
0
\end{array} \right),
\end{equation*}
thus
\begin{align*}
    \partial _{\alpha U}F(\alpha _{n},U_*)\left[\left(1,-\frac{h_{u*}}{h_{k*}}\right)\cos\left(\dfrac{n\pi}{L}x\right)\right]^T
    =\left(-\frac{h_{u*}}{h_{k*}}\frac{\partial^2}{\partial x^2}\left(G\ast\cos\left(\dfrac{n\pi}{L}x\right)\right),0\right)^T\notin\mathcal{R}^s\left(\mathcal{L}_{n}\right)
\end{align*}
as
\begin{equation*}
\frac{h_{u*}}{h_{k*}}l_{n}C_n(G)\int_{-L}^{L}\cos^2\left(\dfrac{n\pi}{L}x\right)dx\neq0,
\end{equation*}
as long as $C_n(G)\ne 0$.
Now by applying Theorem \ref{thm:2}, we obtain the existence of non-constant steady-state solutions of \eqref{1a} (Theorem \ref{thm:3.4}) and \eqref{1b}  (Theorem \ref{thm:3.5}).

Near a bifurcation point $\alpha=\alpha_n$, it follows from \cite{Shi1999} that the sign of $\alpha_n'(0)$ or the one of $\alpha_n''(0)$ when $\alpha_n'(0)=0$ determine the bifurcation direction. If $\alpha_n'(0)\ne 0$, then a transcritical bifurcation occur and a non-trivial solution exists when $\alpha(\ne \alpha_n)$ is close to the bifurcation point $\alpha_n$. If $\alpha_n'(0)=0$ and $\alpha_n''(0)\ne 0$, then a pitchfork bifurcation occurs at $\alpha=\alpha_n$. The pitchfork bifurcation is forward if $\alpha_n''(0)>0$ and there are two (zero) non-trivial solutions for $\alpha>\alpha_n$ ($\alpha<\alpha_n$), and it is backward if $\alpha_n''(0)<0$.
Since 
\begin{equation}\label{3.8}
\begin{aligned}
    &\langle \zeta,F_{UU}(\alpha_{n},U_*)\left[\left(1,-\dfrac{h_{u*}}{h_{k*}}\right)\cos\left(\dfrac{n\pi}{L}x\right)\right]^2\rangle\\=
    &\int_{-L}^{L}\Biggl[\left(f_{uu*}+r_n\left(h_{uu*}-2h_{uk*}\dfrac{h_{u*}}{h_{k*}}\right)\right)\dfrac{1+\cos\left(\dfrac{2n\pi}{L}x\right)}{2}\nonumber +2\alpha_n l_n C_n(G) \dfrac{h_{u*}}{h_{k*}}\cos\left(\dfrac{2n\pi}{L}x\right)\Biggr]\cos\left(\dfrac{n\pi}{L}x\right)dx
    =0,
   \end{aligned}
\end{equation}
where $f_{uu*}\stackrel{\bigtriangleup}{=}f_{uu}(U_*),\  h_{uu*}\stackrel{\bigtriangleup}{=}h_{uu}(U_*),\ h_{uk*}\stackrel{\bigtriangleup}{=}h_{uk}(U_*)=-\beta$, and $\zeta\in (Y^s)^*$ satisfying $N(\zeta)=\mathcal{R}(\mathcal{L}_{n})$. Then
\begin{equation}\label{46}
\begin{split}
\alpha_n'(0)=&-\dfrac{\left\langle \zeta,F_{UU}(\alpha_{n},U_*)\left[\left(1,-\dfrac{h_{u*}}{h_{k*}}\right)\cos\left(\dfrac{n\pi}{L}x\right)\right]^2\right\rangle}{2\left\langle \zeta,F_{\alpha U}(\alpha_{n},U_*)\left[\left(1,-\dfrac{h_{u*}}{h_{k*}}\right)\cos\left(\dfrac{n\pi}{L}x\right)\right]\right\rangle}=0.
\end{split}
\end{equation}
We further calculate  $\alpha_n''(0)$ as in \cite{Shi1999},
\begin{equation}\label{3.10}
\begin{aligned}
\alpha_n''(0)=-\dfrac{\left\langle \zeta,F_{UUU}\left(\alpha_{n},U_*\right)\left[\left(1,-\dfrac{h_{u*}}{h_{k*}}\right)\cos\left(\dfrac{n\pi}{L}x\right)\right]^3\right\rangle}{3\left\langle \zeta,F_{\alpha U}\left(\alpha_{n},U_*\right)\left[\left(1,-\dfrac{h_{u*}}{h_{k*}}\right)\cos\left(\dfrac{n\pi}{L}x\right)\right]\right\rangle}
-\dfrac{\left\langle \zeta,F_{UU}(\alpha_{n},U_*)\left[\left(1,-\dfrac{h_{u*}}{h_{k*}}\right)\cos\left(\dfrac{n\pi}{L}x\right)\right]\left[\Theta\right]\right\rangle}{\left\langle \zeta,F_{\alpha U}\left(\alpha_{n},U_*\right)\left[\left(1,-\dfrac{h_{u*}}{h_{k*}}\right)\cos\left(\dfrac{n\pi}{L}x\right)\right]\right\rangle},
\end{aligned}
\end{equation}
where $\Theta=\left(\Theta_1,\Theta_2\right)$ is the unique solution of 
\begin{equation}\label{3.11}
    F_{UU}\left(\alpha_{n},U_*\right)\left[(1,-\dfrac{h_{u*}}{h_{k*}})\cos\left(\dfrac{n\pi}{L}x\right)\right]^2+F_{U}\left(\alpha_{n},U_*\right)\left[\Theta\right]=0.
\end{equation}
From \cite{ShiShiWang2021JMB} and \eqref{3.8}, we assume $\Theta=\left(\Theta_1,\Theta_2\right)$ has the following form
\begin{equation}\label{3.12}
    \Theta_1=\Theta_1^1+\Theta_1^2 \cos\left(\dfrac{2n\pi}{L}x\right),\ \  \Theta_2=\Theta_2^1+\Theta_2^2 \cos\left(\dfrac{2n\pi}{L}x\right).
\end{equation}
Combining \eqref{3.11} and \eqref{3.12}, after calculation, we have
\begin{align*}
       \Theta_1^1&=-\dfrac{f_{uu*}}{2 f_{u*}}, \ \ \ \ \ \ \ \ \ \ \ \  \Theta_2^1=\dfrac{h_{u*}f_{uu*}-f_{u*}\left(h_{uu*}-2h_{uk*}\dfrac{h_{u*}}{h_{k*}}\right)}{2 f_{u*}h_{k*}},\\
       \Theta_1^2&=-\dfrac{h_{k*}\left(\dfrac{f_{uu*}}{2}+2\alpha_n l_n C_n(G) \dfrac{h_{u*}}{h_{k*}}\right)+2\alpha_n u_* l_n C_n(G)\left(h_{uu*}-2h_{uk*}\dfrac{h_{u*}}{h_{k*}}\right)}{h_{k*}(f_{u*}-4dl_n)+4\alpha_n u_* h_{u*} l_n C_n(G)},\\
       \Theta_2^2&=\dfrac{h_{u*}\left(\dfrac{f_{uu*}}{2}+2\alpha_n l_n C_n(G) \dfrac{h_{u*}}{h_{k*}}\right)-\dfrac{1}{2}\left(f_{u*}-4dl_n\right)\left(h_{uu*}-2h_{uk*}\dfrac{h_{u*}}{h_{k*}}\right)}{h_{k*}\left(f_{u*}-4dl_n\right)+4\alpha_n u_* h_{u*} l_n C_n(G)}.
\end{align*}
Hence, by \eqref{3.10}, $\alpha''(0)$ can be calculated as follows:
\begin{align*}
       \alpha_n''(0)&=\dfrac{\dfrac{f_{uuu*}+r_n g_{uuu*}}{2}+f_{uu*}\left(\Theta_1^1+\dfrac{\Theta_1^2}{2}\right)+\alpha_n\dfrac{h_{u*}}{h_{k*}}l_n C_n(G)\left(\Theta_1^1-\dfrac{\Theta_1^2}{2}\right)}{\dfrac{h_{u*}}{h_{k*}} l_n C_n(G)}\\
       &\ \ +\dfrac{-\alpha_n l_n C_n(G)\Theta_2^2+r_n\left[\left(h_{uu*}-h_{uk*}\dfrac{h_{u*}}{h_{k*}}\right)\left(\Theta_1^1+\dfrac{\Theta_1^2}{2}\right)+h_{uk*}\left(\Theta_2^1+\dfrac{\Theta_2^2}{2}\right)\right]}{\dfrac{h_{u*}}{h_{k*}} l_n C_n(G)},
\end{align*}
where $f_{uuu*}\stackrel{\bigtriangleup}{=}f_{uuu}(U_*),\  g_{uuu*}\stackrel{\bigtriangleup}{=}g_{uuu}(U_*)$. If $\alpha_n''(0)>0$, then  a forward pitchfork bifurcation occurs; and  If $\alpha_n''(0)<0$, then  a backward pitchfork bifurcation occurs.

From Theorem \ref{thm:2.3}, we obtain the stability of the nonconstant steady-state solution of problem \eqref{1a} and \eqref{1b} obtained in Theorems \ref{thm:3.4} and \ref{thm:3.5}, respectively.

\begin{theorem}\label{thm:4.6}
	Suppose the conditions of Theorem \ref{thm:3.4} are satisfied, and let $(\alpha_n(s),U(s,\cdot))$ ($|s|<\delta$) be the non-constant steady state solutions bifurcating from the constant ones at $\alpha=\alpha_n$. Then a pitchfork bifurcation occurs at $\alpha=\alpha_n$ if $\alpha_n''(0)\ne 0$.
	\begin{enumerate}
		\item[(\romannumeral1)] At $\alpha=\alpha_r>0$, suppose that $\alpha_r=\alpha_N$ for $N\in {\mathbb N}$, then the pitchfork bifurcation is forward and the bifurcating solutions are locally asymptotically stable with respect to \eqref{1a} if $\alpha_N''(0)>0$, and it is backward and the bifurcating solutions are unstable if   $\alpha_N''(0)<0$. The other bifurcating solutions near $\alpha_n>0$ for $n\ne N$ are all unstable.
		\item[(\romannumeral2)] At $\alpha=\alpha_l<0$, suppose that $\alpha_l=\alpha_M$ for $M\in {\mathbb N}$, then the pitchfork bifurcation is backward and the bifurcating solutions are  locally asymptotically stable with respect to \eqref{1a} if $\alpha_M''(0)<0$, and it is foward and the bifurcating solutions are unstable if   $\alpha_M''(0)>0$. The other bifurcating solutions near $\alpha_n<0$ for $n\ne M$ are all unstable.
	\end{enumerate}
\end{theorem}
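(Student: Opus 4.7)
The plan is to combine the perturbation result of Theorem \ref{thm:2.3} with the second-order Taylor expansion of $\alpha_n(s)$ enabled by $\alpha_n'(0) = 0$ (already established in \eqref{46}). Specifically, Theorem \ref{thm:2.3} provides the moving eigenvalue $\mu(s)$ along the bifurcating branch from Theorem \ref{thm:3.4} together with the asymptotic relation
\begin{equation*}
\lim_{s \to 0} \frac{-s\, \alpha_n'(s)\, r'(\alpha_n)}{\mu(s)} = 1,
\end{equation*}
where $r(\alpha) := \lambda_n^+(\alpha)$ denotes the eigenvalue of $\mathcal{L}_*(\alpha)$ that crosses zero at $\alpha = \alpha_n$. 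Since $\alpha_n'(s) = \alpha_n''(0)\, s + o(s)$, I will conclude that
\begin{equation*}
\sign(\mu(s)) = -\sign\bigl(\alpha_n''(0)\, r'(\alpha_n)\bigr) \quad \text{for } 0<|s|\ll 1,
\end{equation*}
so the entire stability question for the bifurcating solutions reduces to identifying $\sign(r'(\alpha_n))$.

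The next step is to compute $r'(\alpha_n)$ via implicit differentiation of the characteristic equation \eqref{13} at $(\lambda, \alpha) = (0, \alpha_n)$, giving
\begin{equation*}
r'(\alpha_n) = \frac{u_*\, h_{u*}\, C_n(G)\, l_n}{Tr(J_*) - d l_n}.
\end{equation*}
Since $B_n = Tr(J_*) - d l_n < 0$ and $u_* = 1$, one has $\sign(r'(\alpha_n)) = -\sign(h_{u*} C_n(G))$. Reading the definition \eqref{18} of $\alpha_n$ with $(\mu + \beta)^2 > 0$ and $d - f'(1)/l_n > 0$, I obtain $\sign(\alpha_n) = -\sign(h_{u*} C_n(G))$, hence $\sign(r'(\alpha_n)) = \sign(\alpha_n)$. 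Substitution then delivers the four cases of (i)-(ii) directly: at $\alpha_r > 0$ one gets $\sign(\mu(s)) = -\sign(\alpha_N''(0))$, yielding a stable forward pitchfork when $\alpha_N''(0) > 0$ and an unstable backward one when $\alpha_N''(0) < 0$; at $\alpha_l < 0$ one gets $\sign(\mu(s)) = \sign(\alpha_M''(0))$, yielding a stable backward pitchfork when $\alpha_M''(0) < 0$ and an unstable forward one when $\alpha_M''(0) > 0$.

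The hard part will be establishing that bifurcating branches from every other critical value $\alpha_n$ (i.e.\ $n \neq N$ in the positive regime, $n \neq M$ in the negative) are all unstable, since Theorem \ref{thm:2.3} controls only the single eigenvalue $\mu(s)$ passing through zero. My plan is to observe that the sign computation above gives $r'(\alpha_m) > 0$ for every $m \in \Sigma^+$, so each eigenvalue $\lambda_m^+(\alpha)$ is strictly increasing across $\alpha = \alpha_m$. Thus at any $\alpha_n > \alpha_r = \alpha_N$ with $n \neq N$, the eigenvalue $\lambda_N^+(\alpha_n)$ is strictly positive and bounded away from zero, while the remaining spectral point $h_{k*} < 0$ identified in Theorem \ref{thm:2.5} stays uniformly to the left of the imaginary axis. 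Standard holomorphic perturbation theory for isolated finite-multiplicity eigenvalues then yields a nearby positive eigenvalue of the linearization around the bifurcating solution $U_n(s, \cdot)$ for all sufficiently small $|s|$, proving instability; the argument for $\alpha_n < \alpha_l$ is symmetric using $r'(\alpha_m) < 0$ on $\Sigma^-$.
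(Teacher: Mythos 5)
Your proposal is correct and follows essentially the same route as the paper: you invoke Theorem \ref{thm:2.3} to link $\sign(\mu(s))$ to $\sign(\alpha_n''(0)\,r'(\alpha_n))$ (using $\alpha_n'(0)=0$), you compute $r'(\alpha_n)=u_*h_{u*}C_n(G)l_n/B_n$ by implicit differentiation of \eqref{13}, and you note $\sign(r'(\alpha_n))=\sign(\alpha_n)$, which is exactly the paper's key observation. The one point worth making explicit (which the paper states but you leave implicit) is that the stability conclusion at $\alpha_r=\alpha_N$ and $\alpha_l=\alpha_M$ requires not just $\mu(s)<0$ but also that all other eigenvalues of the linearization remain negative there, which holds precisely because $\alpha_r$ (resp.\ $\alpha_l$) is the extremal bifurcation value in $\Sigma^+$ (resp.\ $\Sigma^-$) and $h_{k*}<0$.
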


\begin{proof} Let $r(\alpha)$ be the eigenvalue of the corresponding linearization operator for the constant solution $U_*$ such that $r(\alpha_n)=0$. 
	From \eqref{2.10}, 
	\begin{equation}
F_{U}\left(\alpha,U_*\right)\left[\phi(\alpha),\psi(\alpha)\right]^T = r(\alpha) K \left[\phi(\alpha),\psi(\alpha)\right]^T,\ \alpha\in(\alpha_n-\epsilon, \alpha_n+\epsilon).
	\end{equation}
	Here $K: X\to Y$ is the inclusion map $i(U)=U$. From Theorem \ref{thm:2.5}, we thus get 
	\begin{equation*}
      r(\alpha)=\dfrac{B_n+\sqrt{B_n^2-4C_n}}{2},
	\end{equation*}
 where $B_n$ and $C_n$ are defined in \eqref{14}. Moreover, from the definition of $B_n$ and $C_n$, we obtain
	\begin{equation}\label{413}
     r'(\alpha_{n})=\dfrac{u_* h_{u*}C_n(G)l_n}{B_n}.
	\end{equation}
 In particular, we have $\sign(r'(\alpha_{n}))=\sign(\alpha_n)$ from \eqref{413} and \eqref{18}. From \eqref{46}, we have $\alpha_n'(0)=0$ for any $n\in {\mathbb N}$, and $\alpha''(0)$ can be calculated as in \eqref{3.10}. Thus a pitchfork bifurcation occurs at $\alpha=\alpha_n$ if $\alpha_n''(0)\ne 0$.

Suppose that $\alpha_r=\alpha_N>0$ for some $N\in {\mathbb N}$, then $r'(\alpha_N)>0$. If $\alpha_N''(0)>0$, then $\alpha_N'(s)>0$  for $s\in(0,\delta)$, and $\alpha_N'(s)<0$  for $s\in(-\delta,0)$. From Theorem \ref{thm:2.3}, $Sign(-s\alpha_N'(s)r'(\alpha_N))=Sign(\mu(s))$, where $\mu(s)$ is the eigenvalue of the corresponding linearization operator for the bifurcating solution at $\alpha=\alpha(s)$. Thus $\mu(s)<0$ for $0<|s|<\delta$. Since all other eigenvalues of linearized equation at $(\alpha_n(s),U(s,\cdot))$ are negative near $\alpha=\alpha_r$, then $(\alpha_n(s),U(s,\cdot))$ is locally asymptotically stable. Similarly if $\alpha_N''(0)<0$, then $\mu(s)>0$ for $0<|s|<\delta$ hence $(\alpha_n(s),U(s,\cdot))$ is unstable. The cases for bifurcation point $\alpha=\alpha_l<0$ or $\alpha\ne \alpha_l,\alpha_r$ can be proved in a similar way.
\end{proof}

\begin{theorem}\label{thm:4.7}
   Suppose the conditions of Theorem \ref{thm:3.5} are satisfied. Then the same results as in Theorem \ref{thm:4.6} hold for equation \eqref{1b}, with  $U_*$ and $\alpha_n$ replaced by  $\widehat {U}_*$ and $\widehat{\alpha}_n$.
\end{theorem}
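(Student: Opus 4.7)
The plan is to mirror the proof of Theorem \ref{thm:4.6} essentially verbatim, after verifying that the spectral and Lyapunov–Schmidt machinery assembled for \eqref{1a} carries over to \eqref{1b} with the quantities $U_*, \alpha_n, h_{u*}, h_{k*}$ replaced by $\widehat U_*, \widehat\alpha_n, \widehat h_{u*}, \widehat h_{k*}$ from \eqref{3.30} and \eqref{3.31}. The starting observation is that the kinetic Jacobian $\widehat J_*$ at $\widehat U_*$ still has $Tr(\widehat J_*)=f_{u*}+\widehat h_{k*}<0$ and $Det(\widehat J_*)=f_{u*}\widehat h_{k*}>0$, since $\widehat h_{k*}=-(\mu+\beta+\rho)<0$. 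Consequently, the arguments of Section \ref{sec:stabilityanalysis} applied to \eqref{1b} produce an analogous spectral decomposition with eigenvalues $\widehat\lambda_n^\pm$ and eigenfunctions given by the natural substitutions in \eqref{14}--\eqref{16}, and Proposition \ref{pro:2.7} translates verbatim to show that $0\in\sigma(\widehat{\mathcal L}_n)$ is a geometrically simple eigenvalue on the even subspace $X^s$ whenever $\alpha=\widehat\alpha_n$ and $C_n(G)\ne 0$.

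Next I would verify the three Crandall–Rabinowitz hypotheses for the nonlinear mapping
\begin{equation*}
\widehat F(\alpha,U)=\begin{pmatrix} d u_{xx}+\alpha(u(G\ast k)_x)_x+f(u)\\ g(u)(\kappa-k)-(\mu+\beta u)k\end{pmatrix},
\end{equation*}
which is the direct analog of $F$ used in Theorem \ref{thm:3.4}. The transversality condition reduces, exactly as in the proof of Theorem \ref{thm:3.5}, to the nonvanishing of $\tfrac{\widehat h_{u*}}{\widehat h_{k*}}l_n C_n(G)\int_{-L}^L\cos^2(n\pi x/L)\,dx$, which holds whenever $C_n(G)\ne 0$. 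Then, repeating the computation leading to \eqref{46} for $\widehat F$, the second-order term $\widehat F_{UU}$ contributes only even harmonics orthogonal to $\cos(n\pi x/L)$, so $\widehat\alpha_n'(0)=0$ for every $n\in\mathbb N$, which rules out a transcritical bifurcation and forces a pitchfork whenever $\widehat\alpha_n''(0)\ne 0$. The coefficient $\widehat\alpha_n''(0)$ is computed via the analog of \eqref{3.10}--\eqref{3.12}, the only modifications being the additional terms coming from $\widehat h_{uu*}$, $\widehat h_{uk*}$, and $\widehat h_{kk*}=-2g(1)\ne 0$ in the Taylor expansion of the second equation; these are routine and produce no structural change in the argument.

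With the bifurcation direction in hand, the stability conclusion follows from Theorem \ref{thm:2.3}. Let $\widehat r(\alpha)$ denote the perturbed eigenvalue of $\widehat{\mathcal L}_*(\alpha)$ with $\widehat r(\widehat\alpha_n)=0$. Differentiating the corresponding characteristic equation (the analog of \eqref{13}) at $\alpha=\widehat\alpha_n$ yields
\begin{equation*}
\widehat r'(\widehat\alpha_n)=\frac{\widehat u_*\widehat h_{u*}C_n(G) l_n}{\widehat B_n},
\end{equation*}
and, since $\widehat B_n<0$, the definition \eqref{3.31} gives $\operatorname{sign}(\widehat r'(\widehat\alpha_n))=\operatorname{sign}(\widehat\alpha_n)$, exactly as in \eqref{413}. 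The formula $\operatorname{sign}(-s\widehat\alpha_n'(s)\widehat r'(\widehat\alpha_n))=\operatorname{sign}(\widehat\mu(s))$ from Theorem \ref{thm:2.3} then determines the sign of the critical eigenvalue $\widehat\mu(s)$ along the bifurcating branch. Combining this with the observation that all other spectral points of $\widehat{\mathcal L}_*(\alpha)$ stay uniformly bounded away from the imaginary axis near $\alpha=\widehat\alpha_r$ (resp.\ $\widehat\alpha_l$)—a direct consequence of Theorem \ref{thm:2.9} together with the essential spectrum being $\{\widehat h_{k*}\}<0$—delivers cases (i) and (ii) exactly as formulated in Theorem \ref{thm:4.6}.

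The only step requiring genuine care, rather than symbolic substitution, is the stability verification for $n\ne M,N$ at $\widehat\alpha_l,\widehat\alpha_r$: one must check that the multiplicity-two zero eigenvalue on the full space $X$ reduces to a simple eigenvalue on $X^s$, so that Theorem \ref{thm:2.3} applies, and that the remaining point $\widehat\lambda_{n'}^+$ with $n'\ne n$ stays strictly negative by the extremality of $\widehat\alpha_r,\widehat\alpha_l$ in \eqref{3.33}. Both are immediate from the definitions, so no additional work is expected beyond the algebraic bookkeeping of the hatted quantities.
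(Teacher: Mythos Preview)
Your proposal is correct and matches the paper's approach: the paper gives no separate proof for Theorem \ref{thm:4.7}, implicitly indicating that the argument of Theorem \ref{thm:4.6} carries over verbatim with the hatted substitutions, which is precisely what you outline. One minor slip: $\widehat h(u,k)=g(u)(\kappa-k)-(\mu+\beta u)k$ is still linear in $k$, so $\widehat h_{kk*}=0$, not $-2g(1)$; this only simplifies the bookkeeping in the $\widehat\alpha_n''(0)$ computation and does not affect your argument.
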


When the bifurcating solutions near $\alpha=\alpha_l$ or $\alpha_r$ are locally asymptotically stable, one can observe a small amplitude non-constant steady state solution of \eqref{1a} with a prescribed wave pattern (corresponding to $N$ or $M$ in Theorem \ref{thm:4.6}). When the bifurcating solutions near $\alpha=\alpha_l$ or $\alpha_r$ are unstable, the corresponding bifurcating branch will bend back through a saddle-node bifurcation and likely to a large amplitude non-constant steady state solution of \eqref{1a}  with the same prescribed wave pattern.

\section{Analysis of the model with a top-hat detection function}\label{sec:top-hatanalysis}

In this section, we study some specific cases to demonstrate some of our analytical results corresponding to different growth functions $g(\cdot)$ which exhibit different rates of growth for large arguments. Depending on the functional form of memory uptake $g(\cdot)$, we establish a number of monotone/nonmonotone properties of the bifurcation values $alpha_n (R)$. In each subsection we plot the relevant bifurcation curves $\alpha_l (R)$, $\alpha_r (R)$, along with a depiction of the corresponding steady state profiles near and far away from these critical values. For the numerical simulations we use a pseudo-spectral method with a forward-Euler time-stepping scheme. Trajectories are run until subsequent time steps are within a tolerance of $10^{-6}$. Note that the spatial domain chosen is $(0,2 \pi)$, equivalent to choosing $\Omega = (-\pi, \pi)$.

To this end, let $L = \pi$ and $f(u) = u ( 1 - u )$. In all cases, we choose $d=\mu = \beta =1.0$ and $\rho=5$. We then choose the following three cases of $g(u)$ to analyze equation \eqref{1a} and equation \eqref{1b} with the top-hat detection function defined in \eqref{detectionkernelT}:\\

(\romannumeral1) $g(u) = \dfrac{2  \rho u^2}{1 + u^2}$;$\quad\quad$    (\romannumeral2) $g(u) = \dfrac{2  \rho u^2}{1 + u}$;$\quad\quad$   (\romannumeral3) $g(u) = \rho u^2$.\\

As previously noted in Remark \ref{rmk:1.2-1}, cases (\romannumeral1) and (\romannumeral2) have a global weak solution for either problem \eqref{1a} or \eqref{1b}. In case (\romannumeral3), a global weak solution is only guaranteed by Theorem \ref{thm:exist1a} for problem \eqref{1b}.
\subsection{\texorpdfstring{Case (\romannumeral1) in equation \eqref{1a}}{}}\label{sec:case1}
In this case, $g(u) = \dfrac{2  \rho u^2}{1 + u^2}$, obviously, $f$ and $g$ satisfy assumptions (H1)-(H2). One can calculate that $ ( u_* , k_* ) = \left( 1 , \dfrac{\rho}{\mu + \beta} \right) $ and 
\begin{equation}\label{4.1}
\begin{aligned}
    &f_{u*} = - 1, \ f_{uu*} = - 2, \ f_{uuu*} = 0, \ g_{u*} = \rho, \ h_{u*} = \dfrac{\rho \mu}{\mu + \beta},\\
    &h_{k*} = - ( \mu + \beta ), \
    h_{uu*} = g_{uu*} = - \rho,\ h_{uk*} = - \beta,\  h_{kk*} = 0,\\
    &h_{uuu*} = g_{uuu*} = h_{uuk*} = h_{ukk*} = h_{kkk*} = 0.
    \end{aligned}
\end{equation}
We consider the following model with the top-hat detection function:
\begin{equation}\label{4.0}
\begin{cases}
u_t = d u_{xx} + \alpha ( u \overline{k}_{x} )_{x} + u (1 - u), & x\in (-\pi,\pi), \ t>0, \\
k_t = \dfrac{2 \rho u^2}{1 + u^2} - ( \mu + \beta u) k, & x \in (-\pi,\pi), \ t> 0,
\end{cases}
\end{equation}
subject to periodic boundary conditions, where $G(x)$ is defined as in \eqref{detectionkernelT}  such that $0<R<\pi$. From \eqref{1.3}, we have
\begin{equation}
	\overline{k}(x)=\dfrac{1}{2 \pi}\int_{x-R}^{x+R} \frac{1}{2R}k(y) dy,\ -\pi\leqslant x\leqslant \pi,
\end{equation}
and for $G$ in \eqref{detectionkernelT}, we have
\begin{equation}\label{cng}
    C_n(G)=\frac{\sin (nR)}{2 \pi n R}.
\end{equation}
As in \eqref{20},  the linearized operator at $(\alpha,U_*)$ is
\begin{align}\label{4.2}
    &\mathcal{L}_*(\alpha)\left(\begin{array}{c}
\phi\\\psi 
\end{array} \right) = \partial_UF(\alpha,U_*)\left(\begin{array}{c}
\phi\\\psi 
\end{array} \right) 
= \left(\begin{array}{cc}
d \phi_{xx} -\phi + \frac{\alpha}{4RL}\left(\psi_x (x+R)-\psi_x (x-R)\right) \\ 
\frac{\rho\mu}{\mu+\beta} \phi -(\mu+\beta)\psi
\end{array} \right).
\end{align}
From Theorem \ref{thm:2.5}, the spectrum of $\mathcal{L}_*(\alpha)$ is consisted of $h_{k*}=-(\mu+\beta)<0$ and eigenvalues $\lambda_n^{\pm}$ which satisfy the characteristic equation
\begin{equation}\label{4.10}
\lambda^2+(1+\mu+\beta+d n^2)\lambda+(\mu +\beta)+\left(\alpha \dfrac{\rho \mu}{\mu + \beta} \dfrac{\sin (nR)}{2 \pi n R} + d(\mu +\beta) \right)n^2=0,
\end{equation}
where $l_n$ is defined in \eqref{6}, $n\in\mathbb{Z}$. When $n=0,$ from \eqref{12}, all eigenvalues of \eqref{4.10} have negative real parts, hence the constant solution $U_*$ is locally asymptotically stable with respect to non-spatial dynamics.
Note that \eqref{4.10} is an even function of $n$, so we consider $n\in\mathbb{N}$ below. 

From \eqref{18},  \eqref{2.24} and \eqref{cng}, we obtain 
\begin{align}
	\alpha_{n}^R &=\frac{Det(J_*)-dh_{k*}n^2}{u_* h_{u*}n^2\frac{\sin (nR)}{2 \pi n R}} = \dfrac{-2 \pi n R(\mu+\beta)^2}{\rho \mu \sin (nR)}\left(d + \dfrac{1}{n^2}\right),\nonumber\\
	\Sigma^+ &= \left\{n\in \mathbb{N}: nR\in \cup_{j=0}^{\infty} (2j\pi,(2j+1)\pi)\right\},\nonumber\\
	\Sigma^- &= \left\{n\in \mathbb{N}: nR\in \cup_{j=0}^{\infty} ((2j+1)\pi,(2j+2)\pi)\right\},\label{4.12}\\
	\alpha_l &=-\frac{2\pi(\mu+\beta)^2}{\rho\mu}\min_{n\in\Sigma^+}\dfrac{nR}{\sin (nR)}\left(d + \dfrac{1}{n^2}\right),\nonumber\\
	\alpha_r &=-\frac{2\pi(\mu+\beta)^2}{\rho\mu}\max_{n\in\Sigma^-}\dfrac{nR}{\sin (nR)}\left(d + \dfrac{1}{n^2}\right),\nonumber
	\end{align}
In Figure \ref{fig:1} we numerically compute the values $\as{\alpha_l}$ and $\alpha_r$ and plot them with respect to the perceptual radius $R$. Now we can apply Theorems \ref{thm:2.8} and \ref{thm:3.4} to \eqref{4.0} to have the following results.
\begin{theorem}\label{thm:4.1}
Let $\alpha_n^R, \Sigma^+, \Sigma^-, \alpha_l, \alpha_r$ be defined in \eqref{4.12}. Then the constant steady state solution $U_*=( 1 , \rho/(\mu + \beta))$ is locally asymptotically stable with respect to \eqref{4.0} when $\alpha_l<\alpha<\alpha_r$ and is unstable when $\alpha<\alpha_l$ or $\alpha>\alpha_r$. Moreover  non-constant steady state solutions of \eqref{4.0} bifurcate from the the branch of constant solutions $\Gamma_0=\{(\alpha,U_*):\alpha\in{\mathbb R}\}$  near $\alpha=\alpha_{n}^R$, and these solutions are on a curve $\Gamma_n=\{(\alpha_n(s),u_n(s,\cdot),k_n(s,\cdot)): |s|<\delta\}$ such that $\alpha_n(0)=\alpha_n^R$ and $\alpha_n'(0)=0$. Moreover, the following monotonicity properties hold.
\begin{enumerate}
    \item [(\romannumeral1)] Suppose $n \in \Sigma^+$ so that $\dfrac{\sin(nR)}{nR}>0$. Then $\alpha_n(R)<0$, and $\alpha_n(R)$ (in particular, $\alpha_l$) is monotonically increasing with respect to $\rho$, is monotonically decreasing with respect to $d$ and $\beta$, and is not monotone with respect to $\mu$.
    \item [(\romannumeral2)] Suppose $n \in \Sigma^-$ so that $\dfrac{\sin(nR)}{nR}<0$. Then $\alpha_n(R)>0$, and $\alpha_n(R)$ (in particular, $\alpha_r$) is monotonically increasing with respect to $d$ and $\beta$, is monotonically decreasing with respect to $\rho$, and is not monotone with respect to $\mu$.
\end{enumerate}
\end{theorem}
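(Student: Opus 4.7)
The plan is to treat Theorem \ref{thm:4.1} as a direct application of the general stability and bifurcation results from Sections \ref{sec:stabilityanalysis}--\ref{sec:bifurcationanalysis}, supplemented by an elementary parameter-dependence analysis of the explicit formula for $\alpha_n^R$. First I would verify that the chosen $f(u)=u(1-u)$ and $g(u)=2\rho u^2/(1+u^2)$ satisfy (H1) and (H2): $f\in C^3$ with $f(0)=f(1)=0$, $f'(0)=1>0$, $f'(1)=-1<0$ and the correct sign pattern on $(0,\infty)$; while $g\in C^3([0,\infty))$ is positive on $(0,\infty)$ with $g(0)=0$ and $g(1)=\rho$. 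The top-hat kernel satisfies (H0) on $(-\pi,\pi)$ by the discussion after (H0), and its Fourier coefficient is as given in \eqref{cng}. Substituting this $C_n(G)$ into \eqref{18} immediately yields the formula for $\alpha_n^R$ in \eqref{4.12}, from which the definitions of $\Sigma^\pm$ and $\alpha_l,\alpha_r$ follow by inspecting the sign of $\sin(nR)$.

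Next, I would invoke Theorem \ref{thm:2.8} to obtain the local asymptotic stability of $U_*$ for $\alpha_l<\alpha<\alpha_r$ and instability outside this interval, and then invoke Theorem \ref{thm:3.4} (paired with the identity $\alpha_n'(0)=0$ established in \eqref{46}) to deduce the existence of the bifurcating curve $\Gamma_n$ of non-constant steady states emanating from $\Gamma_0$ at each $\alpha=\alpha_n^R$ with $C_n(G)\ne 0$. All quantities in \eqref{4.1} are already computed, so no additional verification is required beyond citing these theorems.

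For the monotonicity claims I would rewrite
\begin{equation*}
\alpha_n^R \;=\; \frac{(\mu+\beta)^2}{\rho\mu}\cdot A_n(R,d), \qquad A_n(R,d):=\frac{-2\pi n R}{\sin(nR)}\left(d+\frac{1}{n^2}\right),
\end{equation*}
so that the $R$- and $d$-dependence decouples from the $(\rho,\mu,\beta)$-dependence. The sign of $A_n$ is opposite to that of $\sin(nR)$, which gives $\alpha_n^R<0$ for $n\in\Sigma^+$ and $\alpha_n^R>0$ for $n\in\Sigma^-$. A direct differentiation then yields
\begin{equation*}
\partial_\rho \alpha_n^R = -\frac{(\mu+\beta)^2}{\rho^2\mu}A_n,\qquad \partial_d \alpha_n^R = \frac{(\mu+\beta)^2}{\rho\mu}\cdot\frac{-2\pi n R}{\sin(nR)},\qquad \partial_\beta \alpha_n^R = \frac{2(\mu+\beta)}{\rho\mu}A_n,
\end{equation*}
each of which has a sign determined purely by $\sign(\sin(nR))$, so the monotonicity statements in $\rho$, $d$, and $\beta$ follow at once and flip between $\Sigma^+$ and $\Sigma^-$. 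For the $\mu$-derivative,
\begin{equation*}
\partial_\mu\!\left(\frac{(\mu+\beta)^2}{\mu}\right) = \frac{(\mu+\beta)(\mu-\beta)}{\mu^2},
\end{equation*}
which vanishes at $\mu=\beta$ and changes sign there, proving non-monotonicity in $\mu$ in both cases.

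The argument is essentially routine once the decomposition of $\alpha_n^R$ is in hand; no compactness or fixed-point machinery is needed. The only mildly delicate point is the bookkeeping of signs: the factor $-2\pi n R/\sin(nR)$ is negative on $\Sigma^+$ and positive on $\Sigma^-$, which is what causes the monotonicity directions in $\rho$, $d$, and $\beta$ to reverse between the two index sets. I would therefore make the sign table explicit at the outset to avoid errors, after which each of the six monotonicity claims reduces to a one-line inequality.
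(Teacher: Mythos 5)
Your proposal is correct and matches the paper's (largely implicit) argument: the stability and bifurcation claims are obtained by verifying (H0)--(H2) and citing Theorems \ref{thm:2.8} and \ref{thm:3.4} together with the identity $\alpha_n'(0)=0$ from \eqref{46}, while the monotonicity claims follow by the elementary differentiation of $\alpha_n^R$ that you carry out explicitly — the paper leaves this differentiation unstated but your decomposition $\alpha_n^R=\frac{(\mu+\beta)^2}{\rho\mu}A_n(R,d)$ and the sign table are exactly the right way to organize it.
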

After calculation, 
\begin{equation*}
    \alpha_n''(0)=\frac{-2n^4-\frac{143}{24}n^2-\frac{5}{24}}{-\frac{5}{4} n^2 \frac{\sin (nR)}{2 \pi n R}}.
\end{equation*}
Therefore, at $\alpha=\alpha_r$, the pitchfork bifurcation is forward and the bifurcating solutions are locally asymptotically stable with respect to \eqref{4.0}, at $\alpha=\alpha_l$, the pitchfork bifurcation is backward and the bifurcating solutions are locally asymptotically stable with respect to \eqref{4.0}.

The monotonicity properties described above can be understood intuitively. In case (i), we consider cases of aggregation ($\alpha < 0$), and so a \textit{decreasing} behaviour requires higher rates of advection to destabalize the constant steady state, while an \textit{increasing} behaviour allows for destabalization of the constant steady state at lower advection rates. As is generally understood for diffusion-advection models, diffusion has a stabalizing effect, and higher rates of diffusion therefore require comparably high magnitudes of advection to destabalize the constant steady state. Similarly, an increased value of $\beta$, a `rate of safe return', also requires an increased magnitude of advection to destabalize the constant steady state. This suggests that the population cannot return to previously visited locations too quickly if patterns are to persist. In case (ii), we flip the sign of the advection rate and consider the segregataion case ($\alpha > 0$), in which case the direction of the monotonicities also switch, but the understanding of this behaviour is identical to case (i). Most interestingly, perhaps, is the nonmonotone behaviour with respect to the memory decay rate $\mu$. In fact, in this case $\alpha_l$ ($\alpha_r$) is concave down (up), and so there is a critical value $\mu^* > 0$ so that the rate of advection required to destabalize the constant steady state is minimal. This is in contrast to Theorem \ref{thm:monotone-2} in case (ii), where monotonicity with respect to $\beta$ is lost.

Figure \ref{fig:1} demonstrates the more complex relationship between the perceptual radius $R$ and the sizes of $\alpha_r(R)$ and $\alpha_l(R)$. Of note is the nonmonotone behaviour, particularly for smaller perceptual radii. This wavelike behaviour is most pronounced for $\alpha_r (R)$. It is also easy to see that $\lvert\alpha_l(R)\rvert<\alpha_r(R)$ when $0<R<\pi/2$ (indeed this holds for $R<2.2$ from Figure \ref{fig:1}). But when $R$ is larger than $2.2$, either $\lvert\alpha_l(R)\rvert$ or $\alpha_r(R)$ could be the larger one. This in general shows that the advection rate needed to destabilize the positive equilibrium is larger when the perceptual radius is larger. When the perceptual radius is less than half of domain size, the attractive advection rate needed to destabilize the positive equilibrium is larger than the repulsive one. In Figures \ref{fig:1b}-\ref{fig:1c}, we plot the solution profiles just before, just after, and far beyond the analytically calculated bifurcation points $\alpha_l^*$ and $\alpha_r^*$, respectively.

\begin{figure}[ht]
	\centering
	\includegraphics[width=0.75\textwidth]{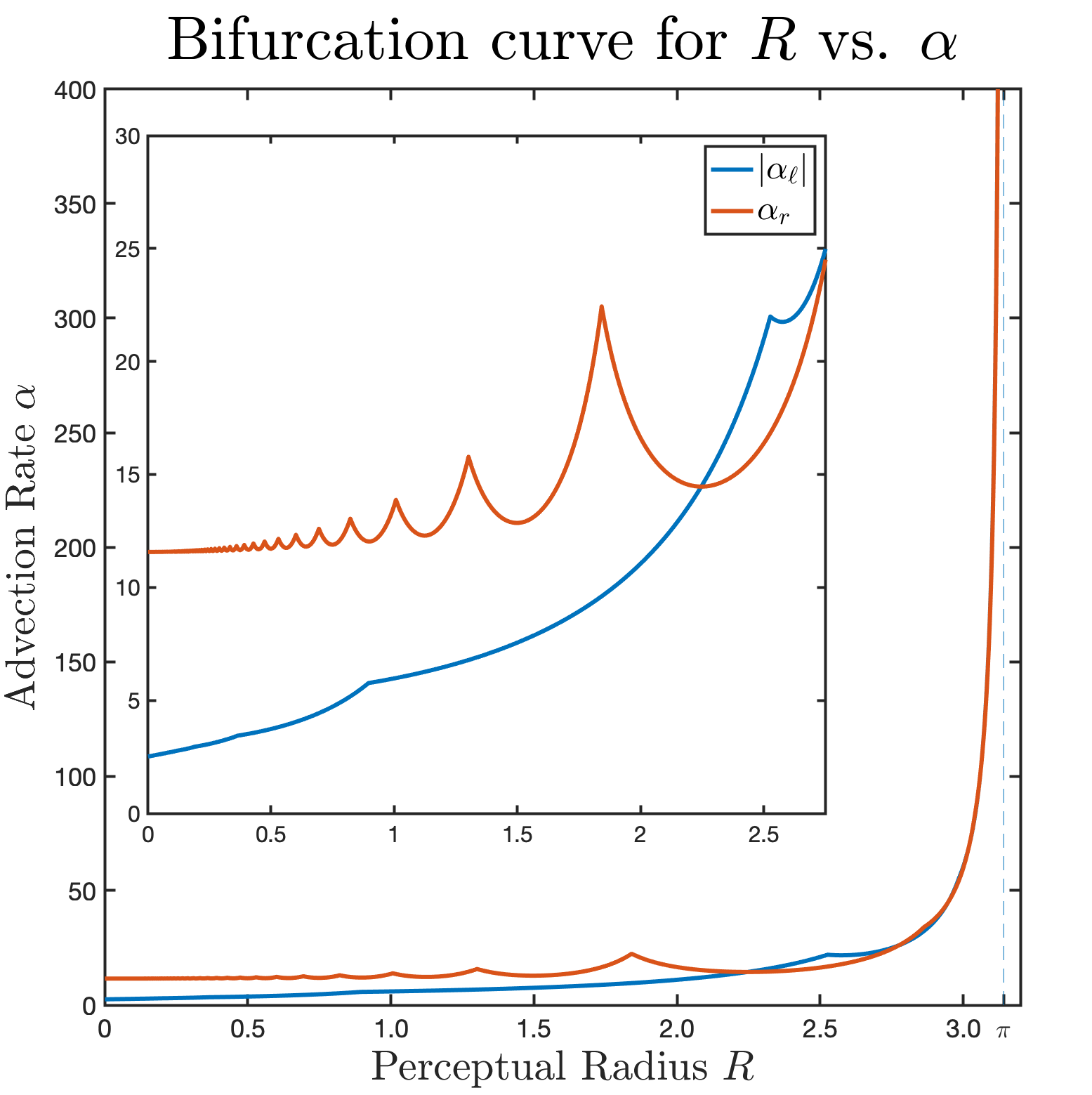}
	\caption{The bifurcation curves for Section \ref{sec:case1} for the perceptual radius $R$ versus the advection rate $\alpha$. In the subsequent Figures \ref{fig:1b}-\ref{fig:1c}, we fix $R=2.5$ and use the bifurcation curves to test values near and far from the critical values $\alpha_r^*$ and $\alpha_\ell ^*$ where the constant steady state is expected to be destabalized. 
	}
	\label{fig:1}
\end{figure}

\begin{figure}[ht]
	\centering
	\includegraphics[width=1.0\textwidth]{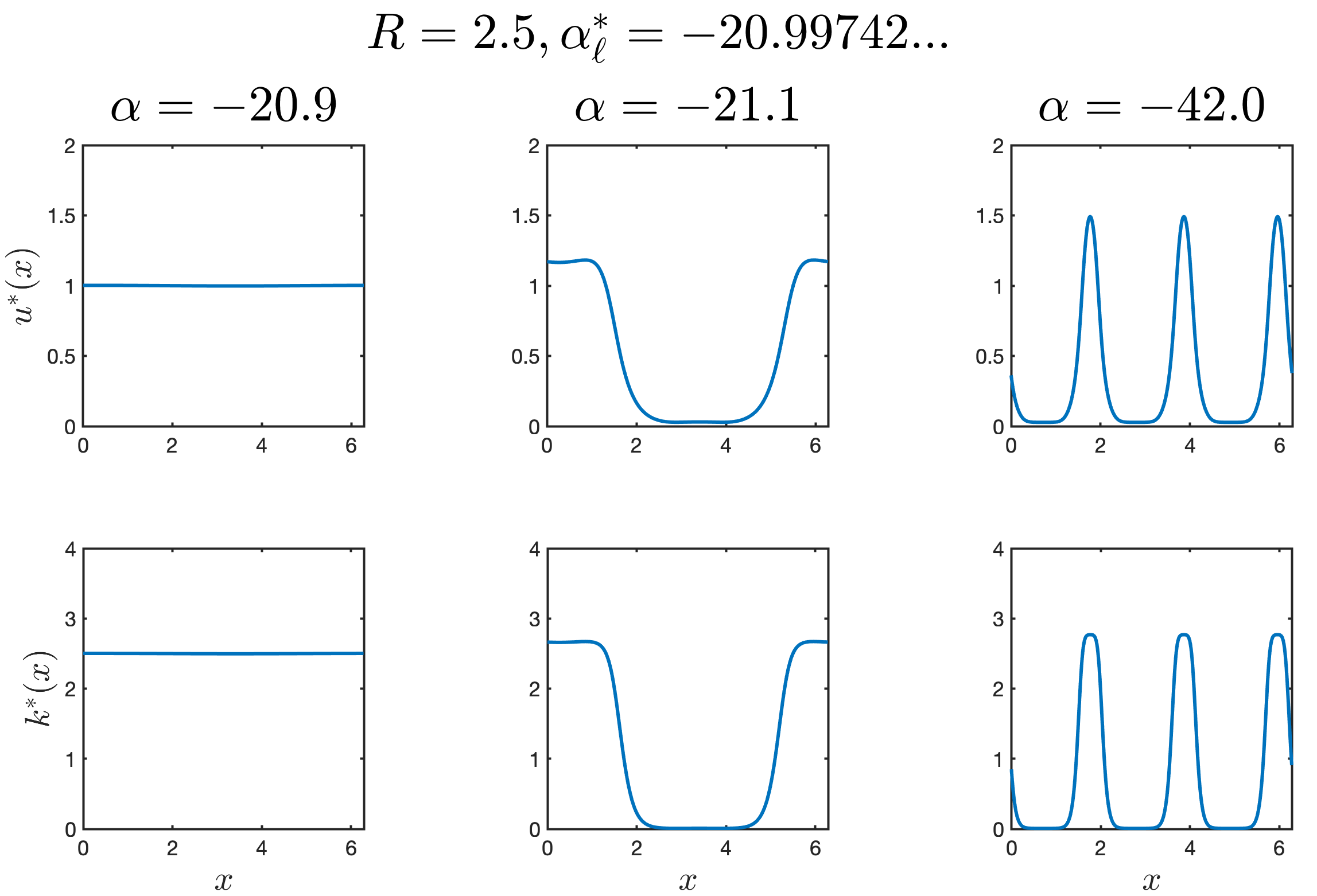}
	\caption{
            The steady state solutions corresponding to Section \ref{sec:case1} for the negative $\alpha$ (aggregation) case. We test $\sim 0.1$ before, $\sim 0.1$ after, and twice the value of the critical value $\alpha_\ell ^*$ given in Figure \ref{fig:1}.
	}
	\label{fig:1b}
\end{figure}

\begin{figure}[ht]
	\centering
	\includegraphics[width=1.0\textwidth]{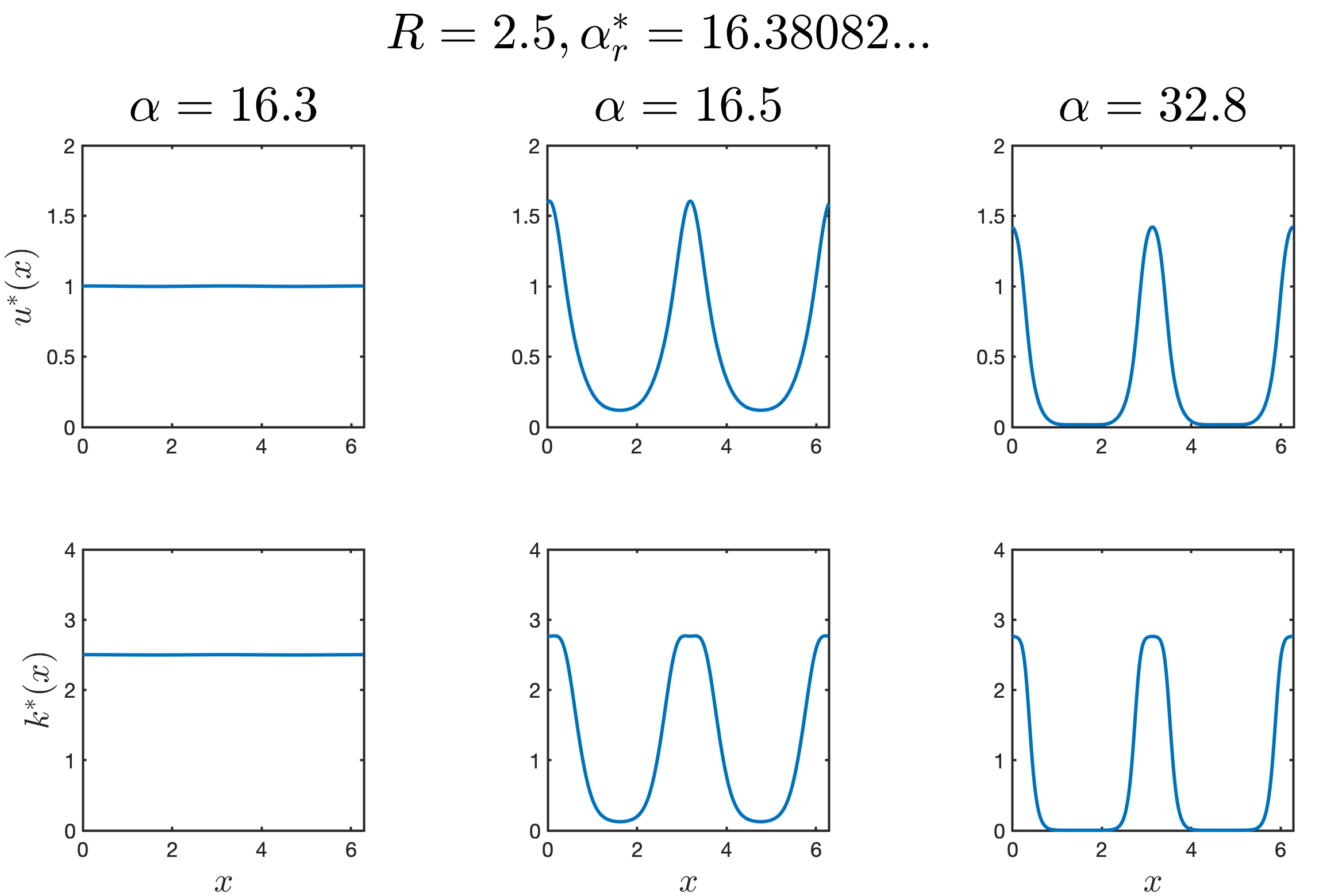}
	\caption{
                    The steady state solutions corresponding to Section \ref{sec:case1} for the positive $\alpha$ (segregation) case. We test $\sim 0.1$ before, $\sim 0.1$ after, and twice the value of the critical value $\alpha_r ^*$ given in Figure \ref{fig:1}.    
	}
	\label{fig:1c}
\end{figure}

\subsection{\texorpdfstring{Case (\romannumeral2) in equation \eqref{1a}}{}}\label{sec:case2}

Let $g(u) = \dfrac{2  \rho u^2}{1 + u}$ in equation \eqref{1a}. Similar to case (\romannumeral1), we have 

\begin{equation}\label{5.10}
\begin{aligned}
    ( u_* &, k_* ) = \left( 1 , \frac{\rho}{\mu + \beta} \right), \ f_{u*} = - 1, \ f_{uu*} = - 2, \ f_{uuu*} = 0, \ g_{u*} = \dfrac{3\rho}{2}, \\ &h_{u*} = \dfrac{3\rho \mu + \rho \beta}{2(\mu + \beta)}, \ h_{k*} = - ( \mu + \beta ),  
    h_{uu*} = g_{uu*} = \dfrac{\rho}{2} ,\ h_{uk*} = - \beta,\\  &h_{kk*} = 0,\ 
    h_{uuu*} = g_{uuu*} = -\dfrac{3\rho}{4},
    h_{uuk*} = h_{ukk*} = h_{kkk*} = 0.
    \end{aligned}
\end{equation}
and
\begin{equation}\label{5.11}
    \alpha_{n}(R) =\frac{Det(J_*)-dh_{k*}n^2}{u_* h_{u*}n^2\frac{\sin (nR)}{2 \pi n R}} = \dfrac{-2(\mu+\beta)^2}{\rho(3 \mu+\beta) \frac{\sin (nR)}{2 \pi n R}}\left(d + \dfrac{1}{n^2}\right).
\end{equation}
In Figure \ref{fig:2}, we again numerically compute $\as{\alpha_l}$ and $\alpha_r$ and plot them with respect to the perceptual radius $R$. In Figures \ref{fig:2b}-\ref{fig:2c}, we again plot the solution profiles just before, just after, and far beyond the analytically calculated bifurcation points $\alpha_l^*$ and $\alpha_r^*$. In this case the stability of the constant steady-state is the same as the result of Theorem \ref{thm:4.1} (which we omit here), while the monotonicity of $\alpha_n(R)$ with respect to parameters is slightly different, we state the theorem below.
\begin{theorem}\label{thm:monotone-2}
Let $\alpha_n(R)$ be defined in \eqref{5.11}. 
\begin{enumerate}
    \item [(\romannumeral1)] Suppose $n \in \Sigma^+$ so that $\dfrac{\sin(nR)}{nR}>0$. Then $\alpha_n(R)<0$, and $\alpha_n(R)$ (in particular, $\alpha_l$) is monotonically increasing with respect to $\rho$, monotonically decreasing with respect to $d$, and is not monotone with with respect to either $\mu$ or $\beta$. 
    \item [(\romannumeral2)] Suppose $n \in \Sigma^-$ so that $\dfrac{\sin(nR)}{nR}<0$. Then $\alpha_n(R)>0$, and $\alpha_n(R)$ (in particular, $\alpha_r$) is monotonically increasing with respect to $d$, monotonically decreasing with respect to $\rho$, and is not monotone with respect to either $\mu$ or $\beta$.
\end{enumerate}
\end{theorem}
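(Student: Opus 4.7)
The proof plan mirrors the argument for Theorem \ref{thm:4.1}. Writing \eqref{5.11} in the factored form
$$
\alpha_n(R) \;=\; \Phi_n(d, \rho, R)\, F(\mu, \beta), \qquad \Phi_n := -\frac{2(d + n^{-2})}{\rho\, \sigma_n(R)}, \qquad F(\mu, \beta) := \frac{(\mu + \beta)^2}{3\mu + \beta},
$$
with $\sigma_n(R) := \sin(nR)/(2\pi n R)$, the positivity of $F$ on the open positive quadrant reduces every monotonicity question to an elementary calculus exercise on $\Phi_n$ and $F$. Since $\mathrm{sign}(\alpha_n) = -\mathrm{sign}(\sigma_n)$, case (i) ($\sigma_n > 0$) corresponds to $\alpha_n < 0$ and case (ii) ($\sigma_n < 0$) to $\alpha_n > 0$, in agreement with the sign statements in the theorem.

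The monotonicities in $d$ and $\rho$ then follow by inspection. The prefactor $\Phi_n$ is affine in $d$ with slope $-2/(\rho\, \sigma_n(R))$, whose sign flips between (i) and (ii), yielding monotone decrease in case (i) and monotone increase in case (ii). Since $\alpha_n \propto 1/\rho$, we have $\partial_\rho \alpha_n = -\alpha_n/\rho$, which gives monotone increase in case (i) and monotone decrease in case (ii). Each statement specialises immediately to $\alpha_l = \min_{n \in \Sigma^+} \alpha_n$ and $\alpha_r = \max_{n \in \Sigma^-} \alpha_n$, because the sign of $\sigma_n$ is fixed on each of $\Sigma^\pm$ and minima/maxima are preserved under multiplication by a positive scalar.

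For the memory parameters $\mu$ and $\beta$, which enter only through $F$, I would compute the partial derivatives
$$
\partial_\mu F(\mu, \beta) = \frac{(\mu + \beta)(3\mu - \beta)}{(3\mu + \beta)^2}, \qquad \partial_\beta F(\mu, \beta) = \frac{(\mu + \beta)(5\mu + \beta)}{(3\mu + \beta)^2}.
$$
The first vanishes along the ray $\mu = \beta/3$ and changes sign across it, so $F(\cdot, \beta)$ attains a strict interior minimum there; multiplying by the sign-definite prefactor $\Phi_n$ preserves this behaviour and yields the non-monotonicity in $\mu$ for both (i) and (ii).

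The analogous claim for $\beta$ is the main obstacle: the right-hand expression for $\partial_\beta F$ above is strictly positive on the open positive quadrant, which \emph{a priori} forces strict monotonicity of $\Phi_n F$ in $\beta$ and contradicts the stated non-monotonicity. The plan to close this gap is to revisit the reduction from \eqref{18} to \eqref{5.11} and carefully track any residual $\beta$-dependence that was absorbed in the simplification — in particular through $k_* = \rho/(\mu + \beta)$ and the coupling between $h_{u*}$ and $h_{k*}$ in \eqref{5.10} — and, failing that, to test the claim numerically with the Section \ref{sec:case2} parameter values in order either to locate the critical curve separating the increasing and decreasing regimes of $\alpha_n$ in $\beta$, or to revise the conclusion to reflect the sign-definite behaviour predicted by the partial derivative of $F$.
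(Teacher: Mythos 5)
Your decomposition $\alpha_n = \Phi_n(d,\rho,R)\,F(\mu,\beta)$ with $F(\mu,\beta)=(\mu+\beta)^2/(3\mu+\beta)$ and your two partial derivatives are both correct; I re-derived $\partial_\mu F=(\mu+\beta)(3\mu-\beta)/(3\mu+\beta)^2$ and $\partial_\beta F=(\mu+\beta)(5\mu+\beta)/(3\mu+\beta)^2$ and they match. The arguments for $d$, $\rho$ and $\mu$, and the passage to $\alpha_l=\min_{n\in\Sigma^+}\alpha_n$, $\alpha_r=\max_{n\in\Sigma^-}\alpha_n$ (note $F>0$ factors out of the extremum over $n$, so the $\mu,\beta$-dependence is carried entirely by $F$) are all sound. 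The paper does not actually supply a proof of Theorem~\ref{thm:monotone-2} (nor of Theorem~\ref{thm:4.1}), so there is no argument to compare against; yours is the natural one.

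The obstacle you flagged with $\beta$ is real, and the escape route you propose (hidden $\beta$-dependence lost in simplifying \eqref{18} to \eqref{5.11}) closes to nothing. Re-running the reduction: $g(u)=2\rho u^2/(1+u)$ gives $g'(1)=3\rho/2$, hence $g'(1)(\mu+\beta)-\beta\rho=\tfrac{\rho}{2}(3\mu+\beta)$; substituting this together with $h_{k*}=-(\mu+\beta)$, $f'(1)=-1$ and $l_n=n^2$ into \eqref{18} reproduces \eqref{5.11} exactly, with no $\beta$-dependence absorbed anywhere. Consequently $\partial_\beta\alpha_n=\Phi_n\,\partial_\beta F$ is strictly sign-definite on the positive quadrant --- negative when $n\in\Sigma^+$, positive when $n\in\Sigma^-$ --- and $\alpha_n$ (hence also $\alpha_l$, $\alpha_r$) \emph{is} monotone in $\beta$, exactly as in Theorem~\ref{thm:4.1}. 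One can also check $\partial^2_\beta F=8\mu^2/(3\mu+\beta)^3>0$, so $F$ is convex and strictly increasing in $\beta$; the remark after the theorem about a critical $\beta^*>0$ at which the destabilising advection is minimised therefore does not hold either. In short, the ``$\ldots$ or $\beta$'' in the statement of Theorem~\ref{thm:monotone-2} is an error: the correct conclusion for $\beta$ should read ``monotonically decreasing with respect to $d$ and $\beta$'' in case (i) and ``monotonically increasing with respect to $d$ and $\beta$'' in case (ii), matching Theorem~\ref{thm:4.1}. With that correction to the target, your proof is complete; do not spend further time hunting for a hidden $\beta$-dependence that is not there.
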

This Theorem, similar to Theorem \ref{thm:4.1} for case (i), retains the expected monotonicity properties with respect to $\rho$ and $d$, while we lose monotonicity with respect to $\beta$. Indeed, the curves $\alpha_l$ and $\alpha_r$ are concave with respect to each parameter, suggesting the existence of critical values $\mu^*>0$ and $\beta^*>0$ so that the magnitude of advection required to destabalize the constant steady state is minimal. This highlights a key difference caused by the choice of memory uptake $g(\cdot)$.

\begin{figure}[ht]
	\centering
	\includegraphics[width=0.75\textwidth]{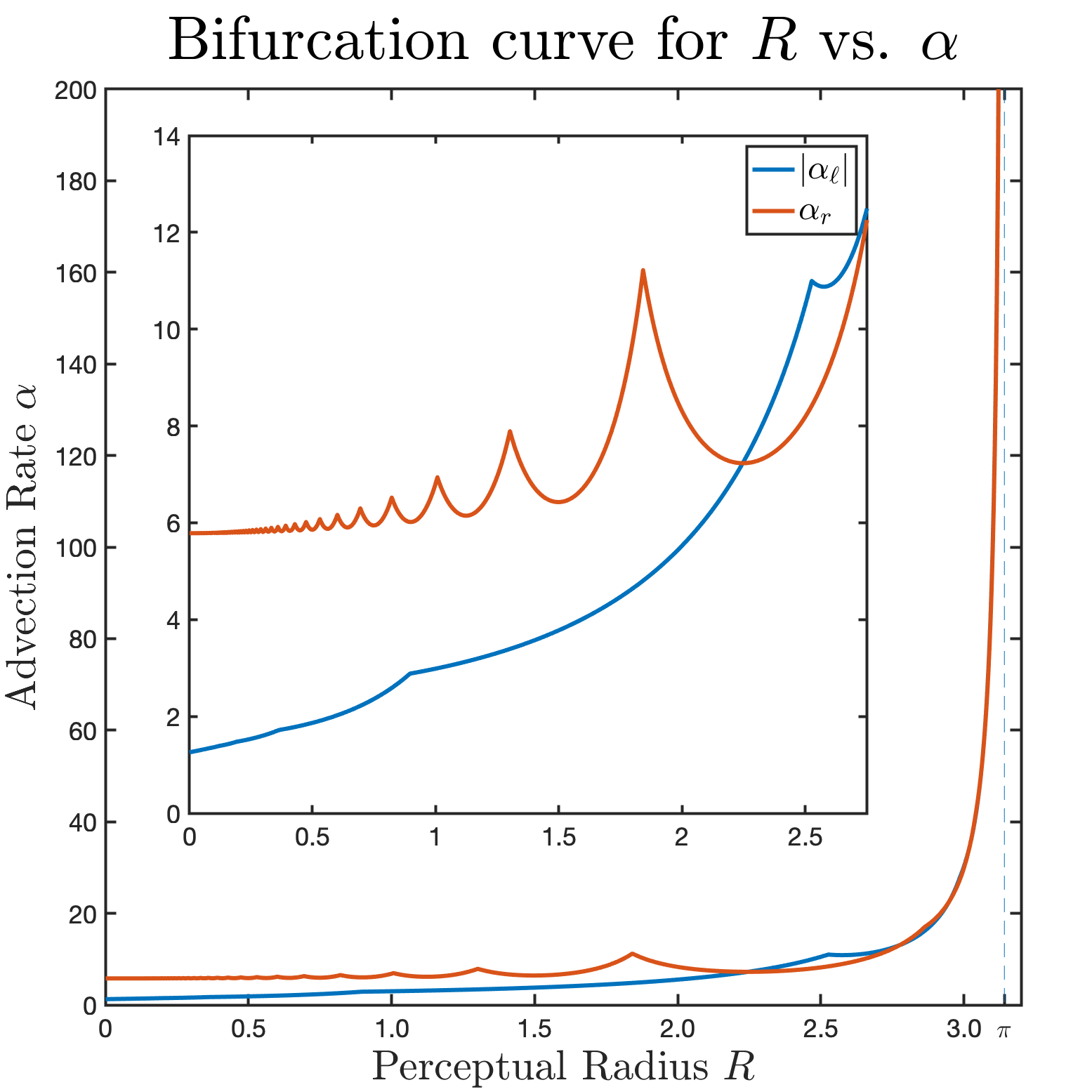}
	\caption{The bifurcation curves for Section \ref{sec:case2} for the perceptual radius $R$ versus the advection rate $\alpha$. In the subsequent Figures \ref{fig:2b}-\ref{fig:2c}, we fix $R=2.5$ and use the bifurcation curves to test values near and far from the critical values $\alpha_r^*$ and $\alpha_\ell ^*$ where the constant steady state is expected to be destabalized. 
	}
	\label{fig:2}
\end{figure}

\begin{figure}[ht]
	\centering
	\includegraphics[width=1.0\textwidth]{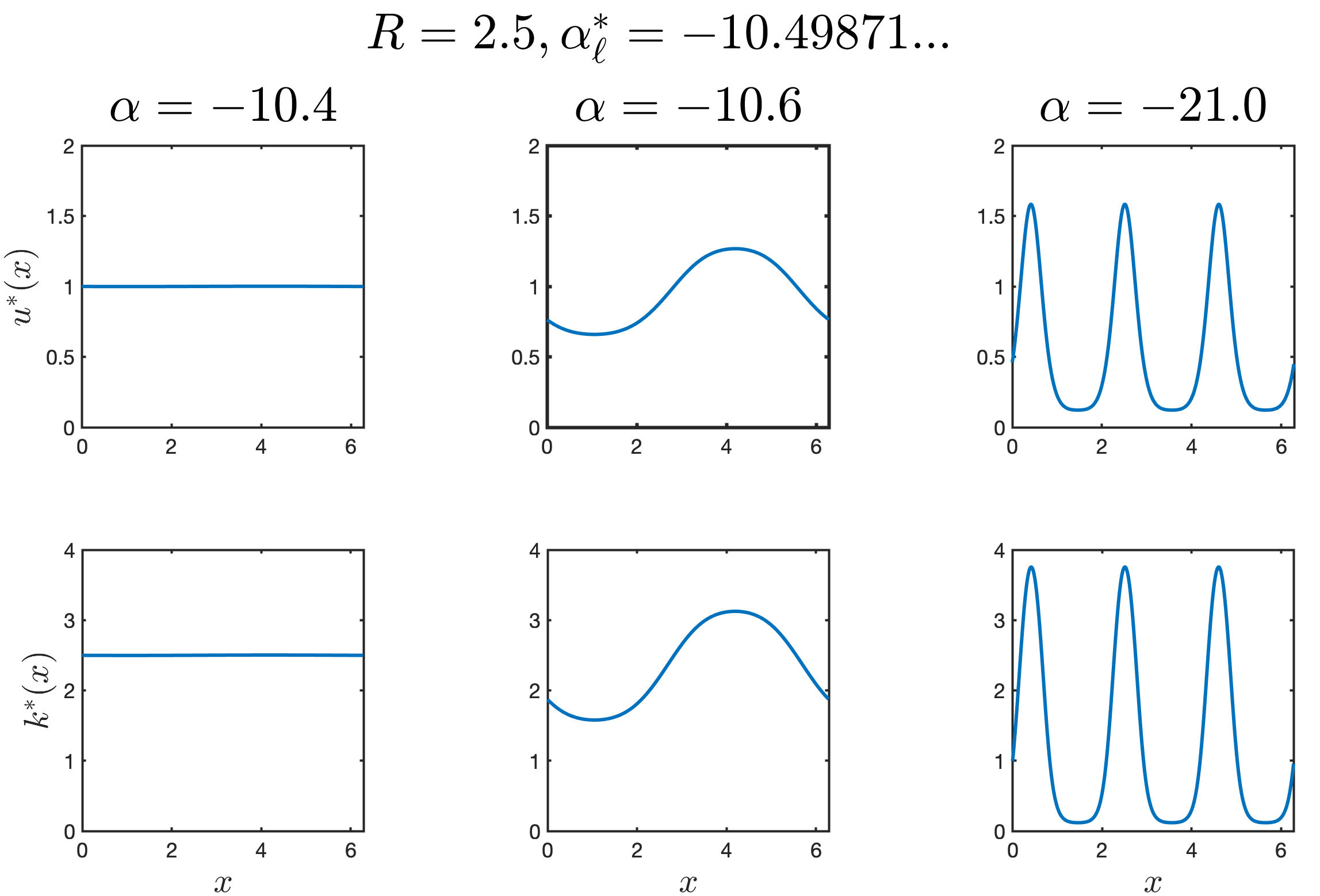}
	\caption{
            The steady state solutions corresponding to Section \ref{sec:case2} for the negative $\alpha$ (aggregation) case. We test $\sim 0.1$ before, $\sim 0.1$ after, and twice the value of the critical value $\alpha_\ell ^*$ given in Figure \ref{fig:2}.
	}
	\label{fig:2b}
\end{figure}

\begin{figure}[ht]
	\centering
	\includegraphics[width=1.0\textwidth]{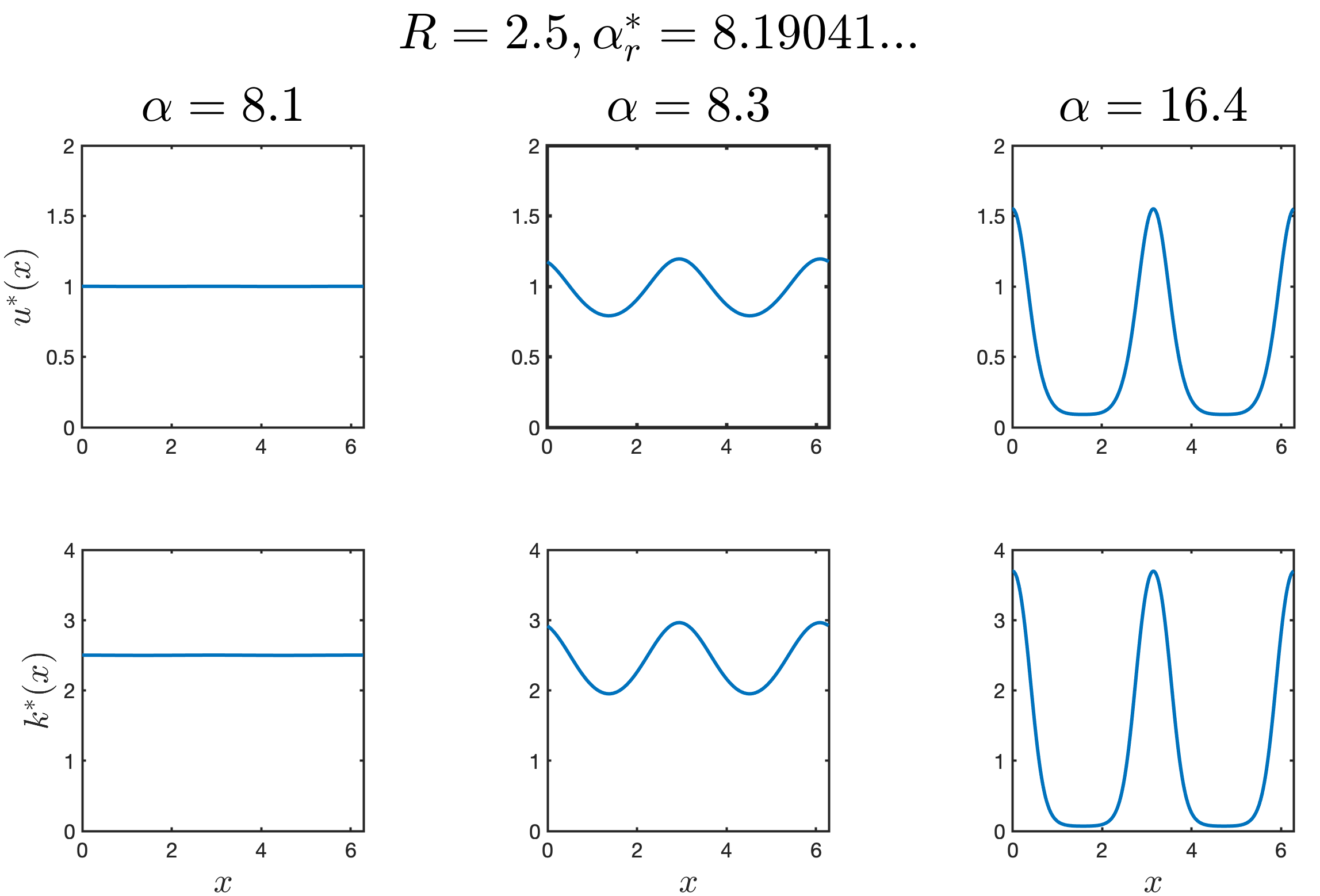}
	\caption{
                    The steady state solutions corresponding to Section \ref{sec:case2} for the positive $\alpha$ (segregation) case. We test $\sim 0.1$ before, $\sim 0.1$ after, and twice the value of the critical value $\alpha_r ^*$.    
	}
	\label{fig:2c}
\end{figure}

\subsection{\texorpdfstring{Case (\romannumeral3) in equation \eqref{1b}}{}}\label{sec:case3}

In this case, we choose $g(u)=\rho u^2$ in equation \eqref{1b}. From \eqref{3.30} and \eqref{3.31}, bifurcation point $\widehat{\alpha}_n(R)$ has the following expression
 \begin{equation}\label{5.12}
 \widehat{\alpha}_{n}(R) =\frac{-(\rho+\mu+\beta)^2}{\kappa\rho (2\mu+\beta) \frac{\sin (nR)}{2 \pi n R}}\left(d-\frac{f'(1)}{l_n}\right).
 \end{equation}
Similar to Theorem \ref{thm:4.1}, we have the following.
\begin{theorem}\label{thm:5.4}
Let $\alpha_n(R), \Sigma^+, \Sigma^-, \alpha_l, \alpha_r$ be defined in \eqref{5.12} and \eqref{3.33}. Then there are non-constant steady-state solutions that bifurcation from the constant solution $( 1 , \frac{\rho}{\rho + \mu + \beta} )$ near $\widehat{\alpha}_{n}(R)$ of system \eqref{1b}. Moreover, the constant solution $( 1 , \frac{\rho}{\rho + \mu + \beta} )$ is locally asymptotically stable when $\widehat{\alpha}_l<\widehat{\alpha}<\widehat{\alpha}_r$ and unstable when $\widehat{\alpha}<\widehat{\alpha}_l$ or $\widehat{\alpha}>\widehat{\alpha}_r$.
\end{theorem}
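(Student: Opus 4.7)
The plan is to deduce Theorem \ref{thm:5.4} as a direct specialization of the general spectral and bifurcation results Theorems \ref{thm:2.9} and \ref{thm:3.5} to the concrete choice $f(u)=u(1-u)$, $g(u)=\rho u^2$, and the top-hat kernel \eqref{detectionkernelT}. First I would verify the standing hypotheses: (H0) is immediate for the top-hat kernel on $(-\pi,\pi)$; (H1) holds for the logistic $f$ with $f'(0)=1>0$, $f'(1)=-1<0$; and (H2) holds for $g(u)=\rho u^2$ with $g(0)=0$, $g(1)=\rho>0$ and $g>0$ on $(0,\infty)$. With the hypotheses verified, I would then compute the constant steady state of system \eqref{1b} as $\widehat{U}_* = (1,\rho\kappa/(\rho+\mu+\beta))$, which matches the statement, and assemble the linearized coefficients using \eqref{3.30}: in particular $g'(1)=2\rho$, so $\widehat{h}_{u*} = \kappa[2\rho(\mu+\beta)-\beta\rho]/(\rho+\mu+\beta) = \kappa\rho(2\mu+\beta)/(\rho+\mu+\beta)$ and $\widehat{h}_{k*} = -(\rho+\mu+\beta)$.

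Next I would substitute these expressions into the general formula \eqref{3.31} together with the Fourier coefficient $C_n(G) = \sin(nR)/(2\pi nR)$ of the top-hat kernel \eqref{cng}, which reproduces \eqref{5.12}. Since the sign of $\widehat{\alpha}_n(R)$ is controlled by the sign of $-C_n(G)$ (the factor $d - f'(1)/l_n = d + 1/n^2$ is strictly positive and $g'(1)(\mu+\beta)-\beta\rho = \rho(2\mu+\beta) > 0$), the sets $\widehat{\Sigma}^\pm$ reduce to the same partition of $\mathbb{N}$ by the sign of $\sin(nR)$ as in \eqref{4.12}, and both $\widehat{\alpha}_l$ and $\widehat{\alpha}_r$ are finite and nonzero by the same argument used in \eqref{2.24} (the Fourier decay $\sum |C_n(G)|^2<\infty$).

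With the expressions and index sets in hand, the stability half of the theorem is an immediate invocation of Theorem \ref{thm:2.9}, which applies verbatim since the required hypotheses are now verified. The existence of the non-constant bifurcating branch at each $\widehat{\alpha}_n(R)$ with $C_n(G)\neq 0$ follows from Theorem \ref{thm:3.5}; the condition $C_n(G)\neq 0$ fails only on the discrete set $\{n : nR \in \pi\mathbb{Z}\}$, which can be noted as an exceptional case.

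I do not expect a serious obstacle: the present theorem is essentially a corollary obtained by plugging in the specific $g$ and kernel and checking that the general hypotheses go through. The only mild care point is bookkeeping the sign of $\widehat{\alpha}_n(R)$ against the sign of $\sin(nR)$ and ensuring that the denominators in \eqref{3.31} do not vanish, but since $2\mu+\beta>0$ in the parameter regime considered, this is automatic. If one wished to additionally classify the direction and stability of the bifurcating branches as in Theorem \ref{thm:4.6}-\ref{thm:4.7}, one would compute $\widehat{\alpha}_n''(0)$ using the derivatives listed analogously to \eqref{4.1}-\eqref{5.10} (here $g_{uu*}=2\rho$, $g_{uuu*}=0$) and apply Theorem \ref{thm:4.7}; this is a straightforward but lengthy calculation rather than a genuine difficulty.
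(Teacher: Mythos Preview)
Your proposal is correct and mirrors the paper's approach exactly: the paper states Theorem \ref{thm:5.4} with the remark ``Similar to Theorem \ref{thm:4.1}'' and gives no further proof, since it is obtained by verifying (H0)--(H2) for the specific $f$, $g$, and top-hat kernel, computing $\widehat{h}_{u*}$, $\widehat{h}_{k*}$, and $C_n(G)$, and then invoking Theorems \ref{thm:2.9} and \ref{thm:3.5} directly. Your additional remarks on the sign bookkeeping and the exceptional set where $C_n(G)=0$ are accurate and in fact slightly more explicit than the paper.
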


\begin{theorem}\label{thm:5.5}
Let $\widehat{\alpha}_n(R)$ be defined in \eqref{5.12}. 
\begin{enumerate}
    \item [(\romannumeral1)] Suppose $n \in \Sigma^+$ so that $\dfrac{\sin(nR)}{nR}>0$. Then $\widehat{\alpha}_n(R)<0$, and $\widehat{\alpha}_n(R)$ (in particular, $\widehat{\alpha}_l$) is monotonically increasing with respect to $\kappa$,  monotonically decreasing with respect to $d$, and is not monotone with respect to any of $\rho$, $\mu$ or $\beta$;
    \item [(\romannumeral2)] Suppose $n\in \Sigma^-$ so that $\dfrac{\sin(nR)}{nR}<0$. Then $\widehat{\alpha}_n(R)>0$, and $\widehat{\alpha}_n(R)$ (in particular, $\widehat{\alpha}_r$) is monotonically increasing with respect to $d$, monotonically decreasing with respect to $\kappa$, and is not monotone with respect to any of $\rho$, $\mu$ or $\beta$.
\end{enumerate}
\end{theorem}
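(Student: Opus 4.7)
The plan is to express $\widehat{\alpha}_n(R)$ from \eqref{5.12} as a product of a sign-determining factor depending only on $(n,R)$ and a strictly positive factor $H(\rho,\mu,\beta,\kappa,d,n)$, then to read off both the sign of $\widehat{\alpha}_n(R)$ and its monotonicity in each parameter from the partial derivatives of $H$.

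First I would write
\[
\widehat{\alpha}_n(R) \;=\; -\,\frac{2\pi nR}{\sin(nR)}\,\cdot\, H, \qquad H := \frac{(\rho+\mu+\beta)^2}{\kappa\rho(2\mu+\beta)}\left(d+\frac{1}{n^2}\right) > 0,
\]
using $f'(1)=-1$ and $l_n=n^2$ since $L=\pi$. Since $H>0$, the sign of $\widehat{\alpha}_n(R)$ is opposite to that of $\sin(nR)/(nR)$, which immediately gives $\widehat{\alpha}_n(R)<0$ on $\Sigma^+$ and $\widehat{\alpha}_n(R)>0$ on $\Sigma^-$ as claimed. Crucially, for any parameter $\theta\in\{d,\kappa,\rho,\mu,\beta\}$,
\[
\operatorname{sgn}\!\left(\frac{\partial\widehat{\alpha}_n}{\partial\theta}\right) \;=\; -\operatorname{sgn}\!\left(\frac{\sin(nR)}{nR}\right)\operatorname{sgn}\!\left(\frac{\partial H}{\partial\theta}\right),
\]
so the direction of monotonicity automatically flips between the $\Sigma^+$ and $\Sigma^-$ cases; it therefore suffices to analyze the one-variable behaviour of $H$.

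For the easy parameters, $\partial H/\partial d >0$ is immediate, while $\partial H/\partial\kappa = -H/\kappa <0$, which yields all four monotonicity statements in $d$ and $\kappa$ in one stroke. For the three remaining parameters I would compute the relevant one-variable derivatives: writing $a=\mu+\beta$, the map $\rho\mapsto (\rho+a)^2/\rho$ has derivative $(\rho+a)(\rho-a)/\rho^2$, changing sign at $\rho^\star=\mu+\beta$; with $c=\rho+\beta$, the map $\mu\mapsto (\mu+c)^2/(2\mu+\beta)$ has derivative $2(\mu+c)(\mu-\rho)/(2\mu+\beta)^2$, changing sign at $\mu^\star=\rho$; and with $c'=\rho+\mu$, the map $\beta\mapsto (\beta+c')^2/(\beta+2\mu)$ has derivative $(\beta+c')(\beta+3\mu-\rho)/(\beta+2\mu)^2$, changing sign at $\beta^\star=\rho-3\mu$ whenever $\rho>3\mu$. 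In each case this exhibits admissible parameter values where $\partial H/\partial\theta$ takes opposite signs, so $H$ is not monotone in $\theta$, and combining with the sign rule above yields the non-monotonicity claims for $\widehat{\alpha}_n$.

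The main obstacle is the $\beta$ direction, since the sign of its critical point depends on whether $\rho>3\mu$; for $\rho\le 3\mu$ the derivative is strictly positive on $\beta>0$. I would address this by making explicit that the non-monotonicity is understood in the sense ``there exist admissible values of the remaining parameters for which monotonicity fails'', and by restricting attention to the sub-regime $\rho>3\mu$ to exhibit the sign change. This regime is realized by the default parameters $\mu=\beta=1$, $\rho=5$ used throughout Section \ref{sec:top-hatanalysis}, and the interpretation is consistent with the analogous non-monotonicity statements in Theorems \ref{thm:4.1} and \ref{thm:monotone-2}. Finally, since $\widehat{\alpha}_l$ and $\widehat{\alpha}_r$ are obtained from $\widehat{\alpha}_n(R)$ by extremization over $\Sigma^\pm$, the same qualitative monotonicity behaviour is inherited, completing the proof.
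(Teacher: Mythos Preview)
The paper does not write out a proof of this theorem; the monotonicity claims are left as direct consequences of the explicit formula \eqref{5.12}, and your argument---factoring $\widehat{\alpha}_n(R)=-\dfrac{2\pi nR}{\sin(nR)}\,H$ with $H>0$ and reading off the signs of $\partial H/\partial\theta$---is exactly the intended elementary calculus. Your handling of the $\beta$-case, noting that the sign change at $\beta^\star=\rho-3\mu$ only occurs in the regime $\rho>3\mu$ (satisfied by the paper's default parameters $\rho=5$, $\mu=1$), is in fact more careful than the paper's bare statement.
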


We again compare to Theorem's \ref{thm:4.1}-\ref{thm:monotone-2}: the monotonicity with respect to $d$ remains, while a quadratic growth for the memory uptake function causes all other previously monotone cases to be nonmonotone! In case (iii), we also have a new parameter $\kappa$, which is the theoretical maximal memory capacity of the organism. It is biologically reasonable, therefore, for an increase in this memory capacity to decrease the magnitude of advection required to destabalize the constant steady state.

In Figure \ref{fig:3}, we plot the bifurcation curves $\alpha_l(R)$ and $\alpha_r(R)$. Figures \ref{fig:3b}-\ref{fig:3c} again show the solution profiles as done in cases (\romannumeral1) and (\romannumeral2). 

\begin{figure}[ht]
	\centering
	\includegraphics[width=0.75\textwidth]{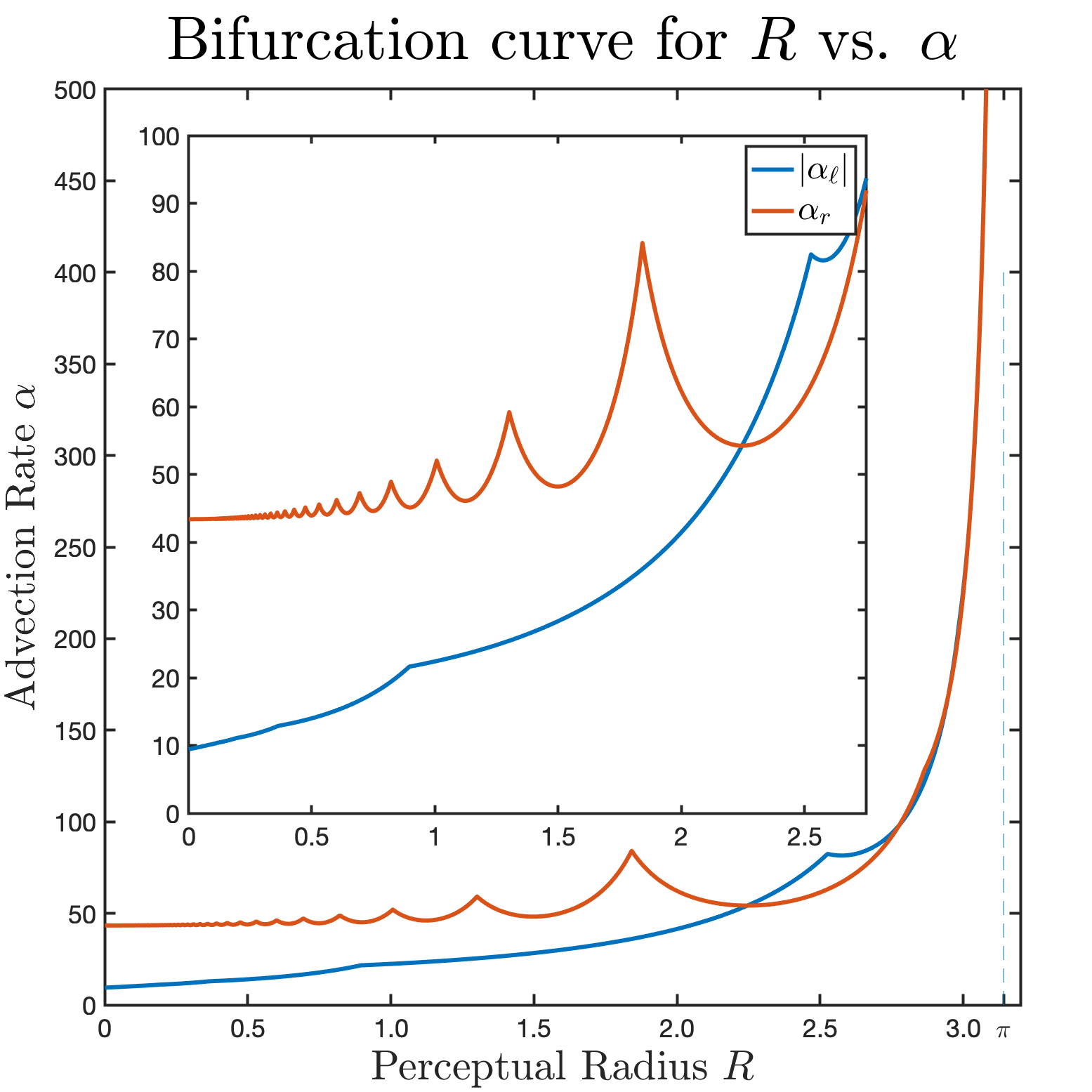}
	\caption{The bifurcation curves for Section \ref{sec:case2} for the perceptual radius $R$ versus the advection rate $\alpha$. In the subsequent Figures \ref{fig:3b}-\ref{fig:3c}, we fix $R=2.5$ and use the bifurcation curves to test values near and far from the critical values $\alpha_r^*$ and $\alpha_\ell ^*$ where the constant steady state is expected to be destabalized. 
	}
	\label{fig:3}
\end{figure}

\begin{figure}[ht]
	\centering
	\includegraphics[width=1.0\textwidth]{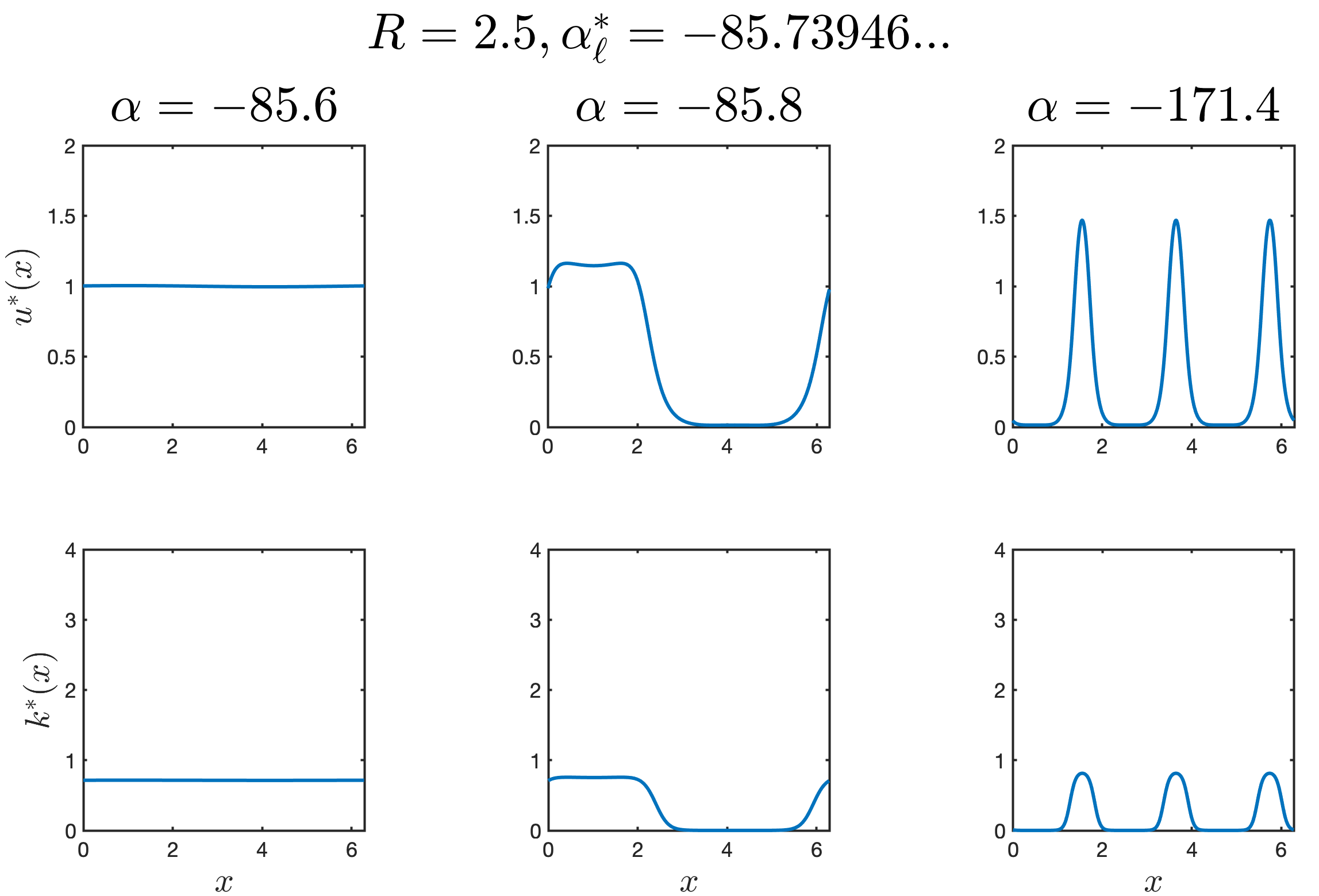}
	\caption{
            The steady state solutions corresponding to Section \ref{sec:case3} for the negative $\alpha$ (aggregation) case. We test $\sim 0.1$ before, $\sim 0.1$ after, and twice the value of the critical value $\alpha_\ell ^*$.
	}
	\label{fig:3b}
\end{figure}

\begin{figure}[ht]
	\centering
	\includegraphics[width=1.0\textwidth]{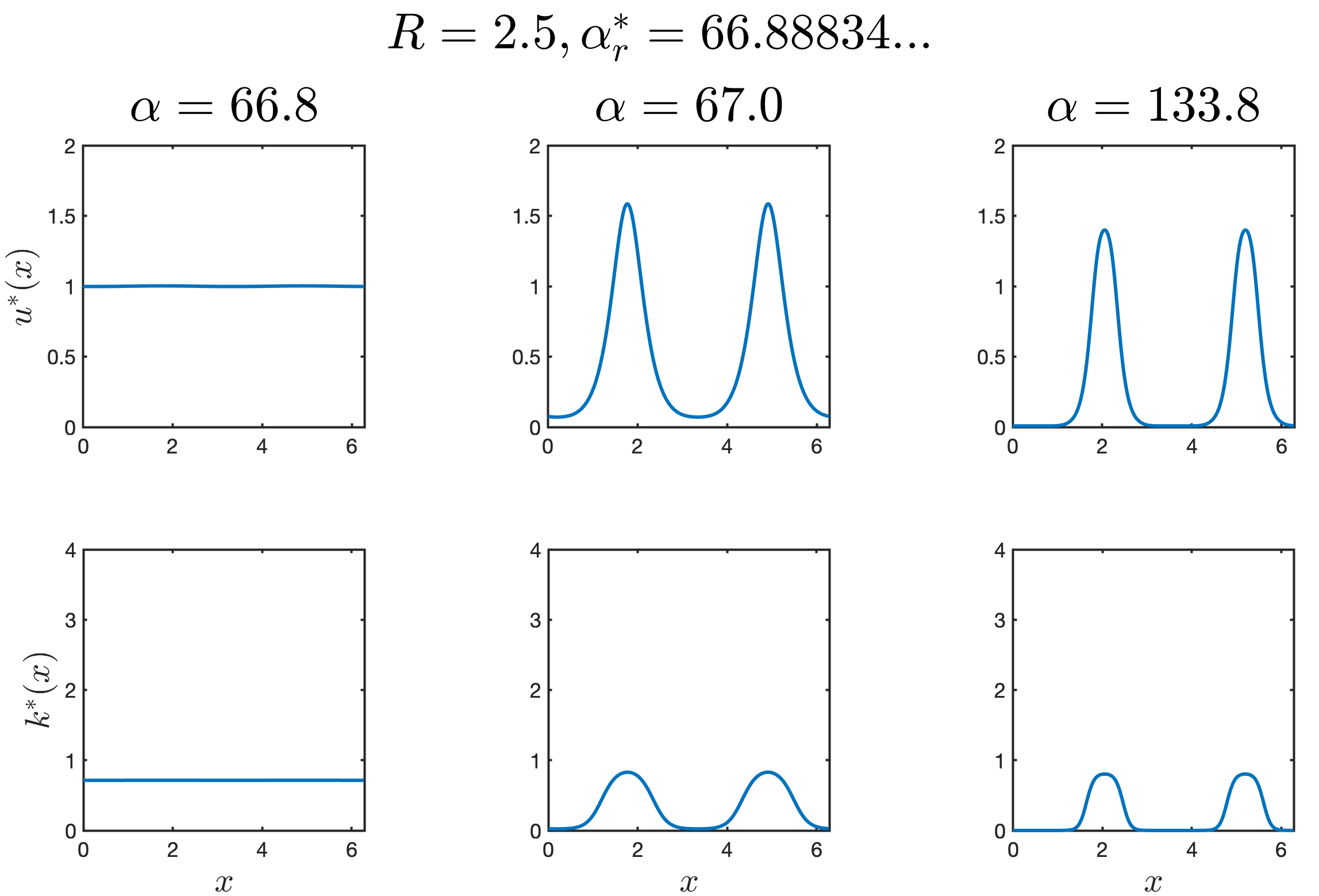}
	\caption{
                    The steady state solutions corresponding to Section \ref{sec:case3} for the positive $\alpha$ (segregation) case. We test $\sim 0.1$ before, $\sim 0.1$ after, and twice the value of the critical value $\alpha_r ^*$.    
	}
	\label{fig:3c}
\end{figure}

\section{Discussion}\label{sec:discussion}

The role of spatial memory in driving the movement of animals has long been of interest to both empirical ecologists \cite{faganetal2013} and mathematical modellers \cite{wangsalmaniw2022}. In this work, we consider the incorporation of a nonlocal advection term in the PDE to model movement in response to remembered space use, where the memory map is described dynamically by an additional ODE. The nonlocal advection term is crucial from both biological and mathematical standpoints \cite{painter2023biological}. Biologically, it more accurately captures an essence of how organisms sense their surrounding environment and make movement decisions based on that information \cite{martinezgarciaetal2020}. This is a useful step forward in our mathematical representation of animal movement ecology, making the modelling formulation more applicable to what is observed in the natural world. Mathematically, however, nonlocality introduces technical difficulties which deserve a careful and robust study. 

One of the significant contributions of this paper is the establishment of a well-posedness result, proving the existence and uniqueness of a global solution, ruling out the possibility of a finite-time blow up. In particular, we showed the existence and uniqueness of a solution even when considering the discontinuous top-hat detection function. This result broadens the existing literature in a crucial way, providing answers to some open questions found in \cite{wangsalmaniw2022}. Prior to this work, the existence of solutions for this specific class of models remained an open question. Our result not only bridges this gap but also opens doors for more complex models incorporating different types of detection functions.

Another significant contribution lies in our robust bifurcation and spectral analyses. Previous efforts, motivated more directly by the ecological application, have often relied solely on a linear stability analyses \cite{briscoeetal2002, potts2016territorial}, which can be insufficient in scenarios where the essential spectrum is nonempty. This does not occur in classical reaction-diffusion systems, but may be the case when nonlocal advection is introduced. Moreover, the comprehensive approach used here reveals a more nuanced understanding of the system's stability, describing more qualitative features of the solution profile near these critical bifurcation points.

From this bifurcation analysis, we establish a number of monotonicity and non-monotonicity results for the critical bifurcation parameters, with the monotonic properties depending on the functional form given chosen for the memory uptake rate $g(\cdot)$. In the special cases considered here, a bounded functional form has the most monotonicity properties, while a roughly linear or quadratic functional form appears to remove most of the monotonicity properties. This suggests the existence of critical values so that the magnitude of advection required to destabalize is minimized, an interesting feature that deserves future study. For example, is the critical value dependent on whether the population aggregates or segregates? 

Our numerical simulations using a pseudo-spectral method further complement our analytical results. These numerical methods are particularly well-suited for dealing with nonlocal advection-diffusion problem, demonstrating some of the interesting dynamics found in these models.

While this work provides new insights and fills existing gaps in the literature, further studies are needed to explore more general functional forms, higher-dimensional space, and for domains with a physical boundary.  From a biological perspective, there are various aspects to memory at play that are not modelled here \cite{faganetal2013}.  In reality, animals' advective tendencies will not simply be towards (or away from) areas they have previously visited.  Rather they will assess the quality of those places -- e.g. whether they contain access to food or shelter, if they have had aggressive or favourable encounters there -- and adjust their advective tendencies accordingly \cite{lewisetal2021}.  Our work paves the way for analysing these more detailed and realistic memory effects.  

One tricky, yet important, feature will be the inclusion of heterogeneous landscapes, for example where some areas are better than others for foraging or hiding from predators \cite{vanmoorteretal2009, merkleetal2014}.  Analysis of nonlinear PDEs often takes place on a homogeneous environment, but to connect better to the ecological community, theory on pattern formation in heterogeneous environments will be of fundamental importance \cite{krause2020one}. Additionally, it will be important to account for between-population interactions via multi-species models with explicit inclusion of memory processes \cite{pottslewis2019}.  A possible way in to this would be to analyse existing models on territory, some which are simply multi-species extensions of the model analysed here \cite{pottslewis2016,potts2016territorial}, by placing these on solid mathematical foundations through developing existence theory, and gaining greater insights into territorial pattern formation through rigorous spectral and bifurcation analyses.

\section{Acknowledgments}\label{sec:ack}

The authors thank Mark Lewis for his insightful comments on model development. DL was partially supported by a HIT PhD Scholarship and the University of Alberta. YS is supported by NSERC grant PDF-578181-2023. JRP acknowledges support of Engineering and Physical Sciences Research Council (EPSRC) grant EP/V002988/1. JS is partially supported by the U.S. National Science Foundation grants OCE-2207343 and DMS-185359. HW gratefully acknowledges support from the Natural Sciences and Engineering Research Council of Canada (Discovery Grant RGPIN-2020-03911 and NSERC Accelerator Grant RGPAS-2020-00090) and the Canada Research Chairs program (Tier 1 Canada Research Chair Award).


\bibliographystyle{unsrtnat}
\bibliography{references}  

\end{document}